\def\bra{[\,\cdot\, , \,\cdot\,]}
\def\braaa{[\,\cdot\, ,\,\cdot\,, \, \cdot\,]}
\def\bran{[\,\cdot\, ,\ldots, \, \cdot\,]}
\def\bracn{\{\,\cdot\, ,\ldots, \, \cdot\, |  \, \cdot\, \}}
\def\act{(\,\cdot\, , \,\cdot\,)}
\def\actt{\langle\cdot,\cdot\,;\cdot\rangle}
\def\Sigm{\mathrm S}
\def\AA{\mathcal A}
\def\QQ{\mathcal Q}
\def\g{\mathfrak g}
\def\Sigmc{\mathrm S^{\mathrm c}}
\def\Tc{\mathrm T^{\mathrm c}}
\def\mani{M}
\def\Aalt{\mathrm{Alt}}
\def\dho{\mathcal D}
\def\cpartial{\partial_{\bra}}
\def\apartial{\partial^{\bra}}
\def\multialgebra{multi derivation chain algebra}
\def\mmd{multi derivation Maurer-Cartan}
\def\ppt{filtered degree $-1$ filtration lowering coderivation\/}
\def\fdm{filtered degree $-1$ morphism\/}
\newtheorem*{t1.9}{Theorem 1.9}
\newtheorem*{Theorem 4.13}{Theorem 4.13}
\newtheorem{thm}{Theorem}[section]
\newtheorem{cor}[thm]{Corollary}
\newtheorem{prop}[thm]{Proposition}
\theoremstyle{definition}
\newtheorem{defi}[thm]{Definition}
\theoremstyle{remark}
\newtheorem{rema}[thm]{Remark}
\numberwithin{equation}{section}
\title
{Multi derivation Maurer-Cartan algebras and sh-Lie-Rinehart algebras}
\author{
J.~Huebschmann
\\[0.3cm]
 USTL, UFR de Math\'ematiques\\
CNRS-UMR 8524
\\
Labex CEMPI (ANR-11-LABX-0007-01)
\\
59655 Villeneuve d'Ascq Cedex, France\\
Johannes.Huebschmann@math.univ-lille1.fr
 }
\long
\def\MSC#1\EndMSC{\def\arg{#1}\ifx\arg\empty\relax\else
      {\par\narrower\noindent
      2010 Mathematics Subject Classification: #1\par}\fi}
\long
\def\KEY#1\EndKEY{\def\arg{#1}\ifx\arg\empty\relax\else
    {\par\narrower\noindent
      Keywords and Phrases: #1\par}\fi}
\begin{document}
\maketitle
\begin{abstract} We extend the classical characterization of a 
finite-dimensional Lie algebra $\g$ in terms of its
Maurer-Cartan algebra---the familiar differential graded algebra
of alternating forms on $\g$ with values in the ground field, endowed
with the standard Lie algebra cohomology operator---to sh Lie-Rinehart 
algebras. To this end, we first develop a characterization of
sh Lie-Rinehart algebras in terms of 
differential graded cocommutative coalgebras and
Lie algebra twisting cochains that extends the nowadays standard
characterization of an ordinary sh Lie algebra
(equivalently: Linfty algebra)
in terms of its associated generalized Cartan-Chevalley-Eilenberg coalgebra.
Our approach avoids any higher brackets but reproduces these brackets
in a conceptual manner. The new technical tool we develop is
a notion of filtered \multialgebra, somewhat more general
than the standard notion of a multicomplex endowed with a compatible
algebra structure.
The crucial observation, just as for ordinary Lie-Rinehart algebras,
is this: 
For a general sh Lie-Rinehart algebra,
the generalized Cartan-Chevalley-Eilenberg 
operator on the corresponding graded algebra
involves two operators, one coming from the sh Lie algebra structure
and the other from the generalized action on the corresponding algebra;
the sum of the operators is defined on the algebra while the operators
are individually defined only on a larger ambient algebra.
We illustrate the structure with quasi Lie-Rinehart algebras.
\end{abstract}

\MSC 16E45, 16T15, 17B55, 17B56, 17B65, 17B70, 18G10, 55P62
\EndMSC

\KEY Maurer-Cartan algebra, sh Lie-Rinehart algebra, $L_{\infty}$ algebra,
higher homotopies, quasi Lie-Rinehart algebra, foliation
\EndKEY

\tableofcontents

\section{Introduction}
A finite-dimensional Lie algebra $\g$ can be characterized in terms of its
{\em Maurer-Cartan\/} algebra, that is, the algebra of alternating forms
on $\g$ with the C(artan-)C(hevalley-)E(ilenberg) differential.
The same is true of a Lie-Rinehart algebra
$(A,L)$ when $L$ is finitely generated and projective as an $A$-module.
A Lie-Rinehart algebra $(A,L)$ is a pair that consists of
a commutative algebra $A$ and a Lie algebra $L$ together with
an $A$-module structure on $L$ and an $L$-action on $A$
by derivations such that two obvious axioms are satisfied;
these axioms are
modeled on the standard example
$(A,L)=(C^{\infty}(\mani),\mathrm{Vect}(\mani))$
that consists of the smooth functions 
$C^{\infty}(\mani)$ and smooth vector fields $\mathrm{Vect}(\mani)$
on a smooth manifold $\mani$.
Given a Lie-Rinehart algebra $(A,L)$,
the CCE operator on the CCE
algebra $\Aalt(L,A)$ involves two derivations
$\apartial$ and $\partial^t$,
the first coming from the Lie bracket and the second
from the $L$-action on $A$, and the sum
$d=\apartial+\partial^t$, at first defined
on $\Aalt(L,A)$, passes to a derivation on
the $A$-valued $A$-multilinear forms $\Aalt_A(L,A)$
and turns this algebra into differential graded $R$-algebra,
even though the individual derivations $\apartial$ and $\partial^t$
do not necessarily descend,
and we refer to the resulting
differential graded $R$-algebra
$(\Aalt_A(L,A),d)$
as the {\em Maurer-Cartan algebra\/}
associated to the data. 
When $L$ is finitely generated and projective as an $A$-module,
this Maurer-Cartan algebra characterizes
the Lie-Rinehart algebra.
In the situation of
the standard example
$(A,L)=(C^{\infty}(\mani),\mathrm{Vect}(\mani))$,
the Maurer-Cartan algebra is the de Rham algebra
of the underlying smooth manifold.

In this paper we generalize the Maurer-Cartan characterization
to sh Lie-Rinehart algebras.
The idea of an sh Lie algebra or, equivalently, $L_{\infty}$ algebra,
has a history \cite{MR2762538}, \cite{MR2640649};
we only mention that the $A_{\infty}$-algebra concept,
prior to the $L_{\infty}$-algebra concept,
was introduced by J. Stasheff in the 1960-s, cf. 
\cite{MR2762538}, \cite{MR2640649}.
Sh Lie-Rinehart algebras were introduced in
\cite{MR1854642} (part of a thesis supervised by J. Stasheff).
In \cite{MR2103009} we introduced quasi Lie-Rinehart algebras
as a higher homotopies generalization of ordinary Lie-Rinehart algebras.
Quasi Lie-Rinehart algebras actually arise in mathematical nature
in the theory of foliations \cite{MR2103009}.
In this paper we develop a Maurer-Cartan type characterization
of sh Lie-Rinehart algebras.
This recovers quasi Lie-Rinehart algebras
as a special case of sh Lie-Rinehart algebras.
The new technical tool we introduce for that purpose
is a notion of \multialgebra, more flexible than the traditional 
concept of a multicomplex endowed with a compatible algebra structure
(as we hope to demonstrate in this paper) and also somewhat more general,
cf. Remarks \ref{classical1} and \ref{classical} below.
In Theorems \ref{char2} and \ref{olr2}
below, we show how sh Lie-Rinehart algebras can be characterized
in terms of the newly developed notion of \multialgebra.
Suffice it to mention here that a 
\multialgebra\  is a graded commutative algebra
$\AA$ together with a filtration $\AA^0\supseteq \AA^1 \ldots$
and a family of degree $-1$ derivations $\{\dho_j\}_{j \geq 0}$
such that $\dho_j$ lowers filtration by $j$ and
such that $\sum_{j \geq 0} \dho_j$ is a differential.
See Section \ref{maintechnical} for details.
The crucial observation now is this:
An sh Lie-Rinehart algebra $(A,L)$ leads to
a 
\multialgebra\  
$\left(\AA,\dho_0,\dho_1, \dho_2,\ldots\right)$
of graded symmetric $A$-multilinear forms but,
beware, the differential  $\sum_{j \geq 0} \dho_j$ is only linear over
the ground ring,
such that, for $j \geq 1$, each operator
$\dho_j$ has the form $\dho_j=\apartial_j+\partial^{t_j}$ and such that,
just as in the case of ordinary Lie-Rinehart algebras,
while $\apartial_j$ and $\partial^{t_j}$ are not individually
defined on $\AA$
(only on a larger ambient graded algebra),
their sum $\dho_j$ is defined on $\AA$.
Under a suitable additional assumption ($A$-reflexivity of $L$),
these \multialgebra\  structures then characterize sh Lie-Rinehart 
algebra structures on $(A,L)$.
A salient feature is that an sh Lie structure
of $L$ lives on the cofree differential graded cocommutative coalgebra
$\Sigmc[sL]$ on $sL$ over the ground ring
whereas the Maurer-Cartan algebra
characterization of an sh Lie-Rinehart structure
is phrased in terms of an algebra
of graded symmetric $A$-multilinear forms on $sL$, viewed
as a  graded $A$-module.

To make the results more easily accessible, in Section \ref{OLR0}
we explain first the special
case of ordinary Lie-Rinehart algebras in a language tailored 
to the general situation. 
In Section 3 we spell out some technical tools
that are indispensable thereafter. 
Here we borrow from the theory of homological perturbations,
cf. e.g., \cite{MR2640649} and \cite{MR2762538}.
We spell out 
the main results
related with sh Lie-Rinehart algebras in Section 5.
Theorem \ref{shlrchar0} says that,
given the relevant data,
under the hypothesis spelled out there,
these data constitute an sh Lie-Rinehart algebra if and only if
they induce a \multialgebra\  structure
on the corresponding object.
Theorem \ref{shlrchar} says that, 
under the stronger hypothesis of this theorem,
every \multialgebra\  structure on
of the kind under discussion arises from a unique
sh Lie-Rinehart algebra structure. See also Remark \ref{remprecise}
below.

On the technical side we note here that we avoid
any \lq\lq bracket yoga\rq\rq. In $L_{\infty}$-technology, it is nowadays
common to use a bracket zoo which necessarily comes with
complications related with signs etc. Our approach in terms of
differential graded cocommutative coalgebras and Lie algebra twisting 
cochains avoids spelling out explicitly 
any of the corresponding brackets and
takes care of any of the complications by itself, once the
Eilenberg-Koszul sign convention has been implemented.

The terminology \lq Maurer-Cartan algebra\rq\  goes back at least to \cite{MR1425752};
among many other things, van Est noticed that the idea 
of a Maurer-Cartan algebra
was used by E. Cartan already in 1936
to characterize the structure of Lie groups and Lie algebras.

A construction aimed at characterizing 
sh Lie-Rinehart algebras in terms of  Maurer-Cartan algebras
has been developed in \cite{vitagone}. The approach in that paper
tames the corresponding bracket zoo. 
In
\cite{MR2110368} and \cite{MR2103009} we used the terminology
\lq\lq multialgebra\rq\rq\ for what we now refer to as
a \multialgebra.
We hope this avoids confusion with a well
 established notion of multialgebra
in the literature that has a meaning very different from
that of \multialgebra, cf. e.g., \cite{MR0146103}.

We are much indebted to J. Stasheff, for many discussions on the topic,
for having enthusiastically insisted 
that the relationship between the various notions discussed in this paper
be conclusively clarified, and for a number of valuable comments on 
a draft of the manuscript.

\section{Ordinary Lie-Rinehart algebras}
\label{OLR0}

For ease of exposition, we explain first the case of ordinary 
Lie-Rinehart algebras, in language and notation tailored to our purposes.
We hope this will provide a road map for the reader so that he can more 
easily digest the material in later sections.

Under suitable circumstances, a Lie-Rinehart algebra
can be characterized in terms of its {\em Maurer-Cartan algebra\/}.
We will give a precise statement as Theorem \ref{olr} below.
To prepare for it,
let $R$ be a commutative ring with 1;
henceforth the unadorned tensor product
refers to the tensor product over $R$.
Let $A$ be a commutative $R$-algebra; then the commutator bracket turns
the $A$-module $\mathrm{Der}(A|R)$ of derivations of $A$
into an $R$-Lie algebra (beware: this is no longer true when
$A$ is non-commutative). Further, let
$L$ be an $A$-module,
$\bra\colon L \times L \to L$ a skew-symmetric pairing, and
\begin{equation}
\vartheta\colon L \longrightarrow \mathrm{Der}(A|R)
\label{derA}
\end{equation}
an $R$-linear map.
Given $\alpha\in L$ and $a \in A$ we write
$\alpha(a)=(\vartheta(\alpha))(a)$.
The pair $(A,L)$ is said to constitute a Lie-Rinehart algebra
when the pieces of structure satisfy  two obvious axioms
modeled
on the pair  $(A,L)= (A,\mathrm{Der}(A|R))$, cf. \cite{poiscoho}.
These axioms read
\begin{align} (a\,\alpha)(b) &= a\,(\alpha(b)), \quad \alpha \in L,
\,a,b \in A, \label{1.1.a}
\\
\lbrack \alpha, a\,\beta \rbrack &= a\, \lbrack \alpha, \beta
\rbrack + \alpha(a)\,\beta,\quad \alpha,\beta \in L, \,a \in A.
\label{1.1.b}
\end{align}

Given only the pieces of structure
$A,L,\bra,\eqref{derA}$,
consider the $R$-algebra $\Aalt(L,A)$
of $A$-valued $R$-multilinear alternating forms on $L$, and
define two $R$-linear derivations $\partial^{t}$ and $\apartial$
on $\Aalt(L,A)$ by
the familiar expressions
\begin{align}
(-1)^{n-1}(\partial^{t}f)&(\alpha_1,\dots,\alpha_n)
=
\sum_{i=1}^n (-1)^{(i-1)}
\alpha_i(f (\alpha_1, \dots\widehat{\alpha_i}\dots, \alpha_n))
\label{dt}
\\
(-1)^{n-1}(\apartial f)&(\alpha_1,\dots,\alpha_n)
= 
\sum_{1 \leq j<k \leq n} (-1)^{(j+k)}f(\lbrack \alpha_j,\alpha_k \rbrack,
\alpha_1, \dots\widehat{\alpha_j}\dots\widehat{\alpha_k}\dots,\alpha_n).
\label{dbra}
\end{align}
The sign $(-1)^{n-1}=(-1)^{|f|}$ is, 
perhaps, not entirely classical.
According to the Eilenberg-Koszul convention,
it is the correct sign, cf. \eqref{homd} and
\eqref{aexplicit} below.
We will justify the notation $\partial^{t}$ shortly;
suffice it to note for the moment that,
when $\bra$ is a Lie bracket and \eqref{derA}
an $L$-action on $A$ by derivations, the operator 
 $\partial^{t}$ 
arises from a Lie algebra twisting cochain $t$
that recovers the $L$-action on $A$. 
Let \begin{equation}
d=\partial^{t} +\apartial .
\end{equation}
The reader will notice right away the following. 

\begin{prop}
\label{rightaway}
When
$\bra$ is a Lie bracket and \eqref{derA} a morphism of $R$-Lie algebras,
the derivation $d$ is 
a differential, in fact, 
the classical C(artan-)C(hevalley-)E(ilenberg)
operator,
and
the differential graded algebra $(\Aalt(L,A),d)$ computes the 
CCE
cohomology of $L$ with coefficients in $A$.
\end{prop}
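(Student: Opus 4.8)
The plan is to identify $d=\partial^{t}+\apartial$, up to the Eilenberg--Koszul sign convention, with the classical Chevalley--Eilenberg coboundary operator of the $R$-Lie algebra $L$ acting on $A$ via \eqref{derA}, and then to quote the classical theorem. I would begin by recording which part of the data actually enters: the complex $\Aalt(L,A)$ is built from $R$-multilinear alternating forms and thus sees $L$ only as an $R$-module, so the $A$-module structure on $L$ and the Lie--Rinehart axioms \eqref{1.1.a}, \eqref{1.1.b} play no role here; the hypothesis is exactly that $\bra$ obeys the Jacobi identity and that $\vartheta$ is a morphism of $R$-Lie algebras, i.e.\ $[\vartheta(\alpha),\vartheta(\beta)]=\vartheta([\alpha,\beta])$ with each $\vartheta(\alpha)$ a derivation of $A$ (the latter being built into the target of \eqref{derA}). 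Equivalently, $A$ is a left module over the Lie algebra $L$.

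Next I would compare \eqref{dt} and \eqref{dbra} with the usual Chevalley--Eilenberg formulas, writing $d_{\mathrm{cl}}$ for the classical coboundary operator. Stripping off the overall factor $(-1)^{n-1}$, the right-hand sides of \eqref{dt} and \eqref{dbra} are, term for term and sign for sign, the classical coboundary $d_{\mathrm{cl}}f$ (the action part from \eqref{dt}, the bracket part from \eqref{dbra}); since an $n$-argument form $g$ has degree $|g|=n-1$ in the convention used here, this says precisely $df=(-1)^{|df|}d_{\mathrm{cl}}f$. Multiplication of a form in a fixed degree by $\pm1$ alters neither its kernel nor its image, so $\ker d=\ker d_{\mathrm{cl}}$ and $\operatorname{im} d=\operatorname{im} d_{\mathrm{cl}}$ degree by degree; equivalently, $d$ is the conjugate of $d_{\mathrm{cl}}$ by a grading-dependent sign automorphism of $\Aalt(L,A)$. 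In particular $d^{2}=0$ if and only if $d_{\mathrm{cl}}^{2}=0$, and $(\Aalt(L,A),d)$ and $(\Aalt(L,A),d_{\mathrm{cl}})$ have the same cohomology. Now I invoke the classical fact: for a Lie algebra $L$ and a left $L$-module $A$ one has $d_{\mathrm{cl}}^{2}=0$, and $(\Aalt(L,A),d_{\mathrm{cl}})$ is by construction the standard cochain complex computing the Lie algebra cohomology $H^{*}(L,A)$. This yields all three assertions of the proposition.

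Should a self-contained verification of $d_{\mathrm{cl}}^{2}=0$ be desired, I would expand $(d_{\mathrm{cl}}^{2}f)(\alpha_{1},\dots,\alpha_{n+1})$ and sort the terms into three families --- \emph{action--action} terms $\alpha_{i}(\alpha_{j}(f(\dots)))$, \emph{bracket--bracket} terms $f([[\alpha_{i},\alpha_{j}],\alpha_{k}],\dots)$, and \emph{mixed} terms $\alpha_{i}(f([\alpha_{j},\alpha_{k}],\dots))$ --- and check that the first family cancels by $[\vartheta(\alpha_{i}),\vartheta(\alpha_{j})]=\vartheta([\alpha_{i},\alpha_{j}])$, the second by the Jacobi identity, and the third by the interplay of $\partial^{t}\apartial$ with $\apartial\partial^{t}$, again using that $\vartheta$ is a Lie morphism; this is the standard bookkeeping, which I would not reproduce in full. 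Structurally --- and in the spirit of the sequel --- $\apartial$ is the derivation induced on $\Aalt(L,A)$ by the square-zero degree $-1$ coderivation on the cofree graded-cocommutative coalgebra on $sL$ that encodes the Jacobi identity, $\partial^{t}$ is the perturbation attached to the Lie algebra twisting cochain $t$ carrying the $L$-action on $A$, and the hypothesis that $\vartheta$ be a Lie morphism is exactly the twisting-cochain (Maurer--Cartan) condition making $\partial^{t}+\apartial$ a differential; Proposition \ref{rightaway} is thus the degenerate, strictly-Lie instance of the general development below (cf.\ Theorem \ref{olr}). The only point that genuinely needs care, and the single mild obstacle, is the sign reconciliation of the second paragraph: once the degree convention $|g|=n-1$ for an $n$-argument form is fixed, the Eilenberg--Koszul factors in \eqref{dt} and \eqref{dbra} are precisely those matching $d_{\mathrm{cl}}$, and everything else is entirely classical.
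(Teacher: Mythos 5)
Your proof is correct and is exactly the argument the paper intends: Proposition \ref{rightaway} is stated there without proof (``The reader will notice right away the following''), the whole point being precisely your observation that $d=\partial^{t}+\apartial$ coincides with the classical CCE coboundary up to the degree-dependent Eilenberg--Koszul sign $(-1)^{n-1}$, which is absorbed by a sign automorphism of $\Aalt(L,A)$ and therefore changes neither the vanishing of $d^{2}$ nor the cohomology. The only blemish is a harmless bookkeeping slip --- the paper's normalization $(-1)^{n-1}=(-1)^{|f|}$, with $f$ an $(n-1)$-argument form, assigns an $m$-argument form degree $m$ rather than $m-1$ as you state --- but since any consistent degree-dependent sign yields the same conclusion, nothing in your argument is affected.
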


Using terminology that goes back at least to \cite{MR1425752},
we refer to a differential graded algebra of the kind 
$(\Aalt(L,A),d)$ as a {\em Maurer-Cartan\/}
algebra. 

Concerning  Lie-Rinehart algebras,
a crucial observation is now
the following.

\begin{prop}
\label{cruc}
When
$(A,L)$ is a Lie-Rinehart algebra,
the derivation $d$ descends to an $R$-linear differential
on $\Aalt_A(L,A)$,
even though this is not true of the individual operators
$\partial^{t}$ and $\apartial$ unless $A=R$ and $\partial^{t}$ is trivial.
\end{prop}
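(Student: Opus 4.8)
The plan is to deduce everything from Proposition~\ref{rightaway} together with a single bookkeeping computation. Since $\Aalt_A(L,A)$ is a graded subalgebra of $\Aalt(L,A)$ (a wedge product of $A$-multilinear alternating forms is again $A$-multilinear and alternating), the identity $d^2=0$ on $\Aalt_A(L,A)$ is inherited from Proposition~\ref{rightaway}, as are the graded Leibniz rule and the $R$-linearity of $d$, \emph{provided} $d$ maps $\Aalt_A(L,A)$ into itself. So the whole content of the assertion is that $df$ is $A$-multilinear whenever $f$ is. As $df$ is alternating, it suffices to test $A$-linearity in the first argument: I would fix $f\in\Aalt_A(L,A)$ of degree $n$, an element $a\in A$, and $\alpha_1,\dots,\alpha_{n+1}\in L$, and compare $(df)(a\alpha_1,\alpha_2,\dots,\alpha_{n+1})$ with $a\,(df)(\alpha_1,\dots,\alpha_{n+1})$.

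Next I would carry out the comparison separately for the two pieces of $d=\partial^{t}+\apartial$. For $\partial^{t}$, inserting $a\alpha_1$ into the first slot of \eqref{dt}, using the $A$-multilinearity of $f$ in the remaining slots and the Leibniz rule for each derivation $\alpha_i\in\mathrm{Der}(A|R)$, one finds that $(-1)^{n}\bigl[(\partial^{t}f)(a\alpha_1,\dots)-a\,(\partial^{t}f)(\alpha_1,\dots)\bigr]$ collapses to an ``error sum'' of the shape $-\sum_{i\ge 2}(-1)^{i}\,\alpha_i(a)\,f(\alpha_1,\dots,\widehat{\alpha_i},\dots,\alpha_{n+1})$; the $i=1$ contribution produces no error, precisely because of axiom \eqref{1.1.a}. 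For $\apartial$, inserting $a\alpha_1$ into \eqref{dbra}, the only terms that are not visibly $a$ times the corresponding term for $\alpha_1$ come from the brackets $[a\alpha_1,\alpha_k]$; rewriting $[a\alpha_1,\alpha_k]=a[\alpha_1,\alpha_k]-\alpha_k(a)\alpha_1$ by skew-symmetry and axiom \eqref{1.1.b}, and again using the $A$-multilinearity of $f$, the error collected is $+\sum_{k\ge 2}(-1)^{k}\,\alpha_k(a)\,f(\alpha_1,\dots,\widehat{\alpha_k},\dots,\alpha_{n+1})$. The two error sums are negatives of one another, hence cancel in $d=\partial^{t}+\apartial$, so $(df)(a\alpha_1,\alpha_2,\dots,\alpha_{n+1})=a\,(df)(\alpha_1,\dots,\alpha_{n+1})$. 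With the reduction of the first paragraph this shows that $(\Aalt_A(L,A),d)$ is a differential graded $R$-algebra.

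For the parenthetical claim, the same computation exhibits $\partial^{t}f$ and $\apartial f$ as $A$-multilinear forms corrected by exactly those two error sums; since $\alpha_i(a)$ is not identically zero unless the anchor $\vartheta$ is trivial (equivalently $\partial^{t}=0$), neither operator preserves $\Aalt_A(L,A)$ in general. I expect the only delicate point in the whole argument to be the sign bookkeeping: one must run the Eilenberg-Koszul sign $(-1)^{|f|}=(-1)^{n}$ of \eqref{dt} and \eqref{dbra} consistently through both expansions so that the two error sums come out with opposite signs. Conceptually there is nothing deeper happening — the cancellation is precisely what the Lie-Rinehart compatibility axiom \eqref{1.1.b} is built to supply — and it is worth noting that this is the elementary shadow of the twisting-cochain description used later, where the same fact reappears as a formal consequence of $t$ being a Lie algebra twisting cochain.
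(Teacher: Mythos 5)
Your argument is correct and follows essentially the same route as the paper's own proof: both verify directly that the error terms produced by $\partial^{t}$ and $\apartial$ upon inserting $a\alpha_1$ cancel by virtue of the two Lie-Rinehart axioms \eqref{1.1.a} and \eqref{1.1.b}, with $d^2=0$ then coming from Proposition \ref{rightaway}. The only difference is one of completeness: the paper carries out the computation explicitly only in degrees $0$ and $1$ and defers higher degrees to ``similar reasoning'' (see also Remark \ref{confirm}), whereas you run the sign bookkeeping in arbitrary degree $n$, and your two error sums do come out with the opposite signs you claim.
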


This observation has a long history; see e.g., \cite{MR2075590}
for a survey. Under such circumstances, we  refer
to the differential graded $R$-algebra 
$(\Aalt_A(L,A),d)$ as the {\em Maurer-Cartan
algebra\/} 
associated to the Lie-Rinehart algebra
and to $d$ as its {\em CCE operator\/}.

We spell out the proof since it clearly shows
how the Lie-Rinehart axioms imply that
the derivation $d$ descends to an $R$-linear differential
on $\Aalt_A(L,A)$,
even though this is not necessarily true of the individual operators
$\partial^{t}$ and $\apartial$.

\begin{proof}[Proof of Proposition {\rm \ref{cruc}}]
We explain the crucial observation; this will help the reader
understand the general case in Theorem \ref{multishp} below:
Let $u \in A$ and $\varphi \in \Aalt^1_A(L,A)=\mathrm{Hom}_A(L,A)$.
Our aim is to show that $du$ is $A$-linear
and that $d\varphi$ is $A$-bilinear. Given $\alpha,\beta \in L$ and $a \in A$,
exploiting the $A$-linearity of $\varphi$ and the two Lie-Rinehart axioms
\eqref{1.1.a} and \eqref{1.1.b}, we find
\begin{align*}
du(a\alpha)&=\partial^{t}u(a\alpha)
\\
&=(a\alpha)(u)=a((\alpha)(u))=adu 
\\
d\varphi(\alpha,a\beta)
&=
\partial^{t}\varphi(\alpha,a\beta) +\apartial\varphi(\alpha,a\beta)
\\
\partial^{t}\varphi(\alpha,a\beta)&=a\partial^{t}\varphi(\alpha,\beta)
+\alpha(a)\varphi(\beta)
\\
\apartial\varphi(\alpha,a\beta)&=a\apartial\varphi(\alpha,\beta)
-\alpha(a)\varphi(\beta)
\\
d\varphi(\alpha,a\beta)
&=
a\left(\partial^{t}\varphi(\alpha,\beta) +\apartial\varphi(\alpha,\beta)\right)
\\
&=
ad\varphi(\alpha,\beta).
\end{align*}
A similar reasoning establishes the $A$-multilinearity
in an arbitrary degree.
\end{proof}

\begin{rema} \label{confirm}
There is no need to confirm 
the $A$-multilinearity
in (upper) degrees $>1$: When $L$ is finitely generated
as an $A$-module,
as an $R$-algebra,
the graded $A$-algebra $\Aalt_A(L,A)$
is generated by its elements in degree
$0$ and (upper) degree $1$.
When $L$ is not finitely generated
as an $A$-module, in a suitable topology,
the graded $A$-subalgebra of $\Aalt_A(L,A)$
generated by its elements in degree
$0$ and (upper) degree $1$
is dense in  $\Aalt_A(L,A)$.
\end{rema}

We will now develop an alternate 
characterization of  Lie-Rinehart algebras,
to be given as Theorem \ref{immediate4} below.
This characterization
will pave the way for
developing a notion of sh Lie-Rinehart algebra in
Section \ref{sh} below.
To this end,
let $sL$ be the suspension of $L$, that is, $sL$ is
the $A$-module $L$ regraded up by one,
and consider the (graded) exterior $R$-algebra $\Lambda[sL]$;
to avoid misunderstanding or confusion, we note that we take 
$\Lambda[sL]$ to be the graded symmetric $R$-algebra on $sL$.
The familiar shuffle diagonal turns $\Lambda[sL]$
into an $R$-bialgebra, in particular, into an $R$-coalgebra,
and the skew-symmetric bracket $\bra$ on $L$
(not assumed to satisfy the Jacobi identity)
determines and is determined by a degree $-1$ coderivation
$\cpartial$ on  $\Lambda[sL]$.
This coderivation induces the derivation $\apartial$
on  $\Aalt(L,A)\cong \mathrm{Hom}(\Lambda[sL],A)$
given by \eqref{dbra};
at this stage  the sign in \eqref{dbra} is the correct one.
Moreover, 
$\Aalt(L,\mathrm{Der}(A|R))\cong
\mathrm{Hom}(\Lambda[sL],\mathrm{Der}(A|R))$ 
acquires a graded Lie algebra structure
as well as a graded $\Aalt(L,A)$-module structure,
the action of $\mathrm{Der}(A|R)$ on $A$ extends to an action
\begin{equation}
\bra\colon \Aalt(L,\mathrm{Der}(A|R))
\times
\Aalt(L,A)
\longrightarrow
\Aalt(L,A)
\label{derA2}
\end{equation}
by derivations
and, with the structure of mutual interaction,
the pair
\begin{equation}
(\Aalt(L,A),\Aalt(L,\mathrm{Der}(A|R)))\cong
\left(\mathrm{Hom}(\Lambda[sL],A),\mathrm{Hom}(\Lambda[sL],\mathrm{Der}(A|R))
\right) 
\label{mutual1}
\end{equation}
constitutes a graded Lie-Rinehart algebra.
This graded Lie-Rinehart algebra is a special case
of \eqref{mutual2} below.
The $R$-linear map $\vartheta\colon L \to \mathrm{Der}(A|R)$, cf. \eqref{derA},
determines (and is determined by) 
the degree $-1$ morphism 
\begin{equation}
t=\vartheta\circ s^{-1}\colon sL \longrightarrow \mathrm{Der}(A|R),
\label{tvartheta}
\end{equation}
the composite of the desuspension $s^{-1}$ with $\vartheta$,
and
$\partial^{t}=[t,\,\cdot\,]$ 
is the degree $-1$ derivation 
on $\mathrm{Hom}(\Lambda[sL],A)\cong \Aalt(L,A)$
given by \eqref{dt}.
The various operators are now related by the identity
\begin{align}
\left([\apartial,\partial^{t}]
+\partial^{t}\partial^{t}\right)
\varphi
&=\left[t\cpartial +t\wedge t,\varphi\right],\ \varphi \in \Aalt(L,A).
\label{exploit0}
\end{align}
Notice that the composite
\[
\begin{CD}
\Lambda[sL]@>{\cpartial}>>\Lambda[sL] @>{t}>> \mathrm{Der}(A|R)
\end{CD}
\]
is a homogeneous degree $-2$ member of
$\mathrm{Hom}(\Lambda[sL],\mathrm{Der}(A|R))$
in an obvious manner, and
here and below we use the familiar 
notation $t\wedge t=\tfrac 12[t,t]$.
Later in the paper, we will generalize the identity 
\eqref{exploit0} to \eqref{exploit1}.

Let $C$ be a coaugmented differential graded cocommutative coalgebra and
$\g$ a differential graded Lie algebra. Then the cup bracket $\bra$
induced by the diagonal of $C$ and the bracket $\bra$ of $\g$
(where the notation $\bra$ is abused)
turns $\mathrm{Hom}(C,\g)$ into a differential graded Lie algebra,
the differential $\dho$ on  $\mathrm{Hom}(C,\g)$
being the ordinary Hom-differential;
a {\em Lie algebra twisting cochain\/}
is a homogeneous morphism $t\colon C \to \g$
of $R$-modules of degree $-1$ 
whose composite with the coaugmentation $\eta\colon R \to C$
is zero and which, with the notation $t\wedge t =\tfrac 12 [t,t]$,
satisfies the identity
\begin{equation}
\dho t + t\wedge t =0.
\label{twc}
\end{equation}
See e.g., 
\cite{huebstas}, 
\cite{MR0436178},
\cite{MR0258031}.
The sign here is the same as that in \cite{MR0258031};
it differs from that in \cite{huebstas}.
The present sign convention simplifies our formulas.

The following is immediate; we spell it our for ease of exposition.

\begin{prop}\label{immediate3}
{\rm (i)}
The bracket $\bra$ on $L$ is a Lie bracket, i.e., satisfies
the Jacobi identity if and only if $\cpartial\cpartial = 0$.

\noindent
{\rm (ii)} Suppose that the bracket $\bra$ on $L$ is a Lie bracket
and let $t$ be the degree $-1$ morphism of $R$-modules given
 by {\rm \eqref{tvartheta}}.
Then the following are equivalent:
\begin{itemize}
\item The morphism $t$ is a Lie algebra twisting cochain
$(\Lambda[sL],\cpartial) \to \mathrm{Der}(A|R)$;
\item the degree $-1$ morphism $t$ satisfies the identity $t\cpartial +t\wedge t=0$;
\item the degree zero morphism $\vartheta$, cf. \eqref{derA}, 
is a morphism of $R$-Lie algebras. \qed
\end{itemize}
\end{prop}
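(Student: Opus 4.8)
The plan is to treat the two parts separately, translating everything into the language of the coalgebra $\Lambda[sL]$ and of the convolution Lie algebra $\mathrm{Hom}(\Lambda[sL],\mathrm{Der}(A|R))$, so that no explicit brackets ever have to be manipulated beyond word length two; this keeps the argument in the spirit of the ``no bracket yoga'' philosophy announced earlier.

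For part (i), I would use that a coderivation of the graded cocommutative coalgebra $\Lambda[sL]$ is uniquely determined by its corestriction to the cogenerators $sL$, and that, under this correspondence, the skew-symmetric pairing $\bra$ corresponds precisely to the word-length-lowering degree $-1$ coderivation $\cpartial$ whose corestriction to $sL$ is a suspension of $\bra$. Since $\cpartial$ is odd, $\cpartial\cpartial=\tfrac 12[\cpartial,\cpartial]$ is again a coderivation, now of degree $-2$ and lowering word length by two; hence $\cpartial\cpartial=0$ if and only if its corestriction to $sL$, a map $\Lambda^3[sL]\to sL$, vanishes. I would then evaluate that corestriction on a product $s\alpha\cdot s\beta\cdot s\gamma$: with the Eilenberg-Koszul sign rule in force it comes out, up to an overall sign, as the Jacobiator of $\alpha,\beta,\gamma$, so $\cpartial\cpartial=0$ is equivalent to the Jacobi identity. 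This is the classical description of a Lie algebra via its Cartan-Chevalley-Eilenberg coalgebra, so I would keep this part brief.

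For part (ii), assume $\bra$ is a Lie bracket, so that by (i) the pair $(\Lambda[sL],\cpartial)$ is a differential graded cocommutative coalgebra, regard $\mathrm{Der}(A|R)$ as a differential graded Lie algebra concentrated in degree zero with zero differential and the commutator bracket, and view $t=\vartheta\circ s^{-1}$ as the degree $-1$ element of $\mathrm{Hom}(\Lambda[sL],\mathrm{Der}(A|R))$ supported on $sL$. The first equivalence (twisting cochain $\Leftrightarrow$ $t\cpartial+t\wedge t=0$) I would obtain by unravelling the definition: the composite of $t$ with the coaugmentation vanishes because $t$ is supported on $sL$, and, as the differential of $\mathrm{Der}(A|R)$ is zero, the Hom-differential reduces to $\dho t=-(-1)^{|t|}t\cpartial=t\cpartial$, so that \eqref{twc} becomes exactly $t\cpartial+t\wedge t=0$. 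For the second equivalence I would note that $t\cpartial+t\wedge t$ is homogeneous of degree $-2$ and, because $t$ is supported on $sL$, $\cpartial$ lowers word length by one, and the only component of the shuffle diagonal landing in $sL\otimes sL$ issues from $\Lambda^2[sL]$, it is entirely concentrated in word length two; hence it vanishes iff it vanishes on all products $s\alpha\cdot s\beta$. Evaluating there gives $(t\cpartial)(s\alpha\cdot s\beta)=\pm\vartheta([\alpha,\beta])$ and $(t\wedge t)(s\alpha\cdot s\beta)=\pm[\vartheta(\alpha),\vartheta(\beta)]$ with compatible signs, so the condition is precisely $\vartheta([\alpha,\beta])=[\vartheta(\alpha),\vartheta(\beta)]$ for all $\alpha,\beta\in L$, i.e.\ that $\vartheta$ is a morphism of $R$-Lie algebras.

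The only genuine work here is sign bookkeeping, so that is where I expect the main obstacle: one must carry the Eilenberg-Koszul convention consistently through the correspondence $\bra\leftrightarrow\cpartial$, through $\cpartial\cpartial$, through the Hom-differential $\dho$, and through the cup bracket $t\wedge t$, and one must carefully justify the two ``concentrated in a single word length'' claims (for $\cpartial\cpartial$ and for $t\cpartial+t\wedge t$) from the fact that $\cpartial$ and the relevant part of the diagonal are quadratic. None of this is conceptually deep, but the signs must be pinned down against the conventions already fixed in the paper.
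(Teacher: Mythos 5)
The paper offers no proof of this proposition---it is declared ``immediate'' and stamped with a \verb|\qed|---so your write-up supplies exactly the unravelling the author intends the reader to perform: coderivations of the cofree cocommutative coalgebra are determined by their corestrictions to $sL$, the Hom-differential degenerates to $t\cpartial$ because $\mathrm{Der}(A|R)$ carries the zero differential, and both $\cpartial\cpartial$ and $t\cpartial+t\wedge t$ are concentrated in a single word length, so everything reduces to evaluation on two or three cogenerators. Your argument is correct (the sign in $(t\wedge t)(s\alpha\cdot s\beta)=[\vartheta(\beta),\vartheta(\alpha)]$ is pinned down by the paper's Remark~\ref{consequences}, consistent with your ``compatible signs'' claim), and it follows the approach the paper's conventions dictate.
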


The following observation
characterizes a Lie-Rinehart algebra structure
on $(A,L)$
entirely in terms of the  corresponding coderivation on $\Lambda[sL]$
and the corresponding degree $-1$ morphism $\Lambda[sL] \to \mathrm{Der}(A|R)$.

\begin{thm}\label{immediate4}
Given the data
$(A,L,\bra,\vartheta)$, as before,
let $t=\vartheta \circ s^{-1}\colon \Lambda[sL] \to \mathrm{Der}(A|R)$, 
cf. {\rm \eqref{tvartheta}}.
The following are equivalent.

\noindent
{\rm (i)} The data
$(A,L,\bra,\vartheta)$
constitute a Lie-Rinehart algebra.

\noindent
{\rm (ii)} The coderivation $\cpartial$ on
 $\Lambda[sL]$
is a differential, i.~e.,
$\cpartial\cpartial=0$,
the degree $-1$
morphism $t$
is $A$-linear, and  $\cpartial$  and $t$ are related by the following 
identities:
\begin{align}
t\cpartial +t\wedge t&=0
\\
\cpartial(\alpha_1,a\alpha_2)
&= (t(\alpha_1)(a))\alpha_2+a\cpartial(\alpha_1,\alpha_2),\ \alpha_1,\alpha_2\in sL,\ a\in A. 
\label{immlr}
\end{align}
\end{thm}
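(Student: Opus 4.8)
The plan is to reduce the claimed equivalence almost entirely to Proposition~\ref{immediate3}, supplemented by a single direct translation between the coalgebra data and the Lie--Rinehart axioms \eqref{1.1.a} and \eqref{1.1.b}. Recall that a Lie--Rinehart structure on $(A,L)$ amounts to four requirements: the Jacobi identity for $\bra$; $\vartheta$ being a morphism of $R$-Lie algebras; the $A$-linearity of $\vartheta$, i.e.\ \eqref{1.1.a}; and the Leibniz rule \eqref{1.1.b}. Proposition~\ref{immediate3}(i) identifies the Jacobi identity with $\cpartial\cpartial=0$; and, once the Jacobi identity is in force, Proposition~\ref{immediate3}(ii) identifies ``$\vartheta$ a morphism of $R$-Lie algebras'' with $t\cpartial+t\wedge t=0$. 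Since $t$ is the degree $-1$ morphism $\Lambda[sL]\to\mathrm{Der}(A|R)$ given by \eqref{tvartheta}, it is supported on $\Lambda^1[sL]=sL$ and there equals $\vartheta\circ s^{-1}$; hence $t$ is $A$-linear if and only if $\vartheta$ is, that is, if and only if \eqref{1.1.a} holds. Thus three of the four items of (ii) are already accounted for, and the only thing left is to show that, granting \eqref{1.1.a}, the identity \eqref{immlr} is equivalent to \eqref{1.1.b}.

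For that last point I would argue as follows. Since $\cpartial$ has degree $-1$ and $L$ sits in degree $0$, for $\alpha_i=s\beta_i\in sL$ the element $\cpartial(\alpha_1,\alpha_2)=\cpartial(s\beta_1\cdot s\beta_2)$ lies in $\Lambda^1[sL]=sL$, and by the very definition of the coderivation determined by the skew pairing $\bra$ one has $\cpartial(s\beta_1\cdot s\beta_2)=c\,s[\beta_1,\beta_2]$ with $c=\pm1$ a universal sign whose exact value is immaterial for the sequel. Using that the degree-zero scalar $a$ satisfies $a\,s\beta=s(a\beta)$ without a Koszul sign, one computes
\[
\cpartial(\alpha_1,a\alpha_2)=\cpartial\bigl(s\beta_1\cdot s(a\beta_2)\bigr)=c\,s[\beta_1,a\beta_2]
=c\,s\bigl(a[\beta_1,\beta_2]+\beta_1(a)\beta_2\bigr)
=a\cdot\bigl(c\,s[\beta_1,\beta_2]\bigr)+\bigl(\beta_1(a)\bigr)\cdot\bigl(c\,s\beta_2\bigr),
\]
the third equality being \eqref{1.1.b}; since $c\,s[\beta_1,\beta_2]=\cpartial(\alpha_1,\alpha_2)$ and $\beta_1(a)=t(\alpha_1)(a)$, the right-hand side is precisely $a\cpartial(\alpha_1,\alpha_2)+(t(\alpha_1)(a))\alpha_2$, which is \eqref{immlr}. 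The ambient sign $c$ is invertible, so reading the computation backwards shows conversely that \eqref{immlr} forces \eqref{1.1.b}.

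Finally I would assemble the equivalence. For (i)$\Rightarrow$(ii), the four Lie--Rinehart axioms give respectively $\cpartial\cpartial=0$, then (legitimately, as the Jacobi identity is available) $t\cpartial+t\wedge t=0$, the $A$-linearity of $t$, and \eqref{immlr}. For (ii)$\Rightarrow$(i) the only subtlety is the order of deduction: one must first extract the Jacobi identity from $\cpartial\cpartial=0$ so that Proposition~\ref{immediate3}(ii) becomes applicable and yields that $\vartheta$ is a morphism of $R$-Lie algebras; then $A$-linearity of $t$ gives \eqref{1.1.a} and \eqref{immlr} gives \eqref{1.1.b}, so $(A,L,\bra,\vartheta)$ is a Lie--Rinehart algebra. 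I do not expect a genuine obstacle here: the whole content beyond Proposition~\ref{immediate3} is the displayed Koszul-sign computation, in which the ambient sign $c$ cancels; the one point requiring care is respecting the hypothesis of Proposition~\ref{immediate3}(ii) in the ``only if'' direction.
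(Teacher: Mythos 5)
The paper disposes of this theorem with ``straightforward and left to the reader,'' so there is no detailed argument to compare against; your reduction to Proposition~\ref{immediate3} together with a direct translation of \eqref{immlr} into \eqref{1.1.b} is exactly the intended verification, and you correctly handle the one logical subtlety in the (ii)$\Rightarrow$(i) direction, namely that the Jacobi identity must be extracted from $\cpartial\cpartial=0$ before Proposition~\ref{immediate3}(ii) may be invoked. One justification, however, is misstated: the sign $c$ in $\cpartial(s\beta_1\cdot s\beta_2)=c\,s[\beta_1,\beta_2]$ is \emph{not} immaterial. In your displayed computation the term $a\cdot\bigl(c\,s[\beta_1,\beta_2]\bigr)$ reassembles into $a\,\cpartial(\alpha_1,\alpha_2)$ with the $c$ absorbed on both sides, but the other term is $\beta_1(a)\cdot\bigl(c\,s\beta_2\bigr)=c\,(t(\alpha_1)(a))\alpha_2$, and that $c$ cancels against nothing; with $c=-1$ the axiom \eqref{1.1.b} would translate into \eqref{immlr} with the opposite sign on the $(t(\alpha_1)(a))\alpha_2$ term, and the stated equivalence would fail. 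So the equivalence of \eqref{immlr} with \eqref{1.1.b} genuinely requires $c=+1$. This is not a gap in substance: the paper's normalization of $\cpartial$ --- pinned down by requiring that the induced derivation on $\mathrm{Hom}(\Lambda[sL],A)$ be \eqref{dbra}, and computed explicitly in Remark~\ref{consequences} as $\cpartial(sx,sy)=s[x,y]$ --- gives exactly $c=+1$. You should cite that normalization at this step rather than assert that the ambient sign cancels.
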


\begin{proof} This is straightforward and left to the reader.
\end{proof}

\begin{cor}
\label{char1}
Suppose that the $A$-module $L$ has the property that
the canonical map 
\begin{equation}
L \longrightarrow \mathrm{Hom}_A(\mathrm{Hom}_A(L,A),A)
\label{cano0}
\end{equation}
from $L$ to its double $A$-dual 
$\mathrm{Hom}_A(\mathrm{Hom}_A(L,A),A)$ is an injection
of $A$-modules.
Then the pair $(A,L)$ constitutes a Lie-Rinehart algebra if and only if
the derivation
$d=\partial^{t} +\apartial$ on
$\Aalt(L,A)$ passes to an $R$-linear differential on
$\Aalt_A(L,A)$,
necessarily a derivation.
\end{cor}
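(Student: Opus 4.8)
The forward implication is essentially Proposition~\ref{cruc} together with Proposition~\ref{rightaway}: if $(A,L)$ is a Lie-Rinehart algebra, then $d$ carries the subalgebra $\Aalt_A(L,A)$ into itself (Proposition~\ref{cruc}) and, being there the restriction of the CCE operator of Proposition~\ref{rightaway}, squares to zero; since $\partial^{t}$ and $\apartial$ are derivations of $\Aalt(L,A)$, the restriction of $d$ to $\Aalt_A(L,A)$ is a derivation. So the real content lies in the converse, and the double $A$-dual hypothesis is exactly what drives it. The plan is to recover, one at a time, the four ingredients of a Lie-Rinehart structure on $(A,L)$ --- the $A$-linearity of $\vartheta$ (axiom~\eqref{1.1.a}), the Leibniz rule~\eqref{1.1.b}, the fact that $\vartheta$ is a morphism of $R$-Lie algebras, and the Jacobi identity for the bracket on $L$ --- by testing the two hypotheses ``$d$ preserves $A$-multilinearity'' and ``$d^2=0$ on $\Aalt_A(L,A)$'' in degrees $0$ and $1$ only, where $\Aalt^0_A(L,A)=A$ and $\Aalt^1_A(L,A)=\mathrm{Hom}_A(L,A)$; nothing is lost in restricting to these degrees, cf.\ Remark~\ref{confirm}.

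First I would run the argument in degree $0$. For $u\in A$ one has $du=\partial^{t}u$ and $(du)(\alpha)=\alpha(u)$, so requiring $du$ to be $A$-linear gives $(a\alpha)(u)=a\,(\alpha(u))$ for all $u$, which is axiom~\eqref{1.1.a}. A short computation with~\eqref{dt} and~\eqref{dbra} shows that $d^2u$ is the $2$-form $(\alpha,\beta)\mapsto\bigl(\vartheta([\alpha,\beta])-[\vartheta(\alpha),\vartheta(\beta)]\bigr)(u)$; hence $d^2u=0$ for all $u$ forces $\vartheta$ to be a morphism of $R$-Lie algebras. Next, in degree $1$, for $\varphi\in\mathrm{Hom}_A(L,A)$ the formulas~\eqref{dt} and~\eqref{dbra} give $(d\varphi)(\alpha,\beta)=\beta(\varphi(\alpha))-\alpha(\varphi(\beta))+\varphi([\alpha,\beta])$; imposing $A$-bilinearity of $d\varphi$ and using axiom~\eqref{1.1.a}, the derivation property of $\vartheta(\alpha)$, and the $A$-linearity of $\varphi$ reduces the identity to
\[
\varphi\bigl([\alpha,a\beta]-a[\alpha,\beta]-\alpha(a)\beta\bigr)=0\quad\text{for all }\varphi\in\mathrm{Hom}_A(L,A).
\]
This is where the hypothesis enters: since $L\to\mathrm{Hom}_A(\mathrm{Hom}_A(L,A),A)$ is injective, the bracketed element of $L$ vanishes, which is the Leibniz rule~\eqref{1.1.b}. (Linearity of $d\varphi$ in the remaining slot is automatic, $d\varphi$ being alternating.)

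It remains to extract the Jacobi identity from $d^2\varphi=0$, and here I would use the identity~\eqref{exploit0} rather than analyze $d^2$ term by term. Since $\vartheta$ is by now known to be a morphism of $R$-Lie algebras, one has $t\cpartial+t\wedge t=0$ --- this is the relevant equivalence in Proposition~\ref{immediate3}(ii), and in any case is immediate from the formulas, the map $t\cpartial+t\wedge t$ vanishing exactly when $\vartheta([\alpha,\beta])=[\vartheta(\alpha),\vartheta(\beta)]$ for all $\alpha,\beta$. Then~\eqref{exploit0} gives $([\apartial,\partial^{t}]+\partial^{t}\partial^{t})\varphi=0$ for every $\varphi$, so that $d^2\varphi=\apartial\apartial\varphi$ on degree-$1$ forms; and $\apartial\apartial\varphi$ evaluated on $\alpha,\beta,\gamma$ equals $\pm\varphi$ of their Jacobiator. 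Hence $d^2\varphi=0$ for all $\varphi\in\mathrm{Hom}_A(L,A)$, combined once more with the injectivity of the double $A$-dual, yields the Jacobi identity. With the four ingredients in hand, $(A,L)$ is a Lie-Rinehart algebra.

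I expect the only real obstacle to be organizational rather than computational: the cross terms $\partial^{t}\partial^{t}$ and $[\apartial,\partial^{t}]$ do not vanish individually, so the degree-$1$ instance of $d^2=0$ must be routed through~\eqref{exploit0}, which in turn requires that $\vartheta$ has already been shown to be a Lie morphism; thus the two pieces of the hypothesis ($A$-multilinearity and $d^2=0$) must be exploited in the right order. One must also keep precise track of where the double-dual hypothesis is genuinely used --- exactly twice, each time to promote an identity of the form ``$\varphi(x)=0$ for all $A$-linear $\varphi$'' to ``$x=0$ in $L$'', once for $x$ the Leibniz defect and once for $x$ the Jacobiator.
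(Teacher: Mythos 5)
Your proposal is correct and follows essentially the same route as the paper's proof: both extract the four pieces of structure by testing the two hypotheses on $A=\Aalt^0_A(L,A)$ and on $\mathrm{Hom}_A(L,A)=\Aalt^1_A(L,A)$, both route the degree-one instance of $d^2=0$ through the identity \eqref{exploit0} after first establishing $t\cpartial+t\wedge t=0$ from $d^2a=0$, and both invoke the double-dual injectivity exactly twice (Leibniz defect and Jacobiator). The only cosmetic difference is the order in which the four conditions are harvested and that the paper packages the conclusion via Theorem \ref{immediate4} rather than citing the axioms \eqref{1.1.a} and \eqref{1.1.b} directly.
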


The proof is straightforward, cf. e.g., \cite{MR2103009} (Lemma 2.2.11).
In Theorem \ref{char2} below,
we will generalize the sufficiency claim.
To prepare for this generalization,
we will now give a technically  more involved proof
of Corollary \ref{char1}
which extends to the more general situation of Theorem \ref{char2},
see also Remark \ref{instructive} below.

\begin{proof}[Proof of Corollary {\rm \ref{char1}}]
Proposition \ref{cruc} shows that the condition is necessary.
To show that it is sufficient,
suppose that the derivation
$d=\partial^{t} +\apartial$ on
$\Aalt(L,A)$ passes to an $R$-linear differential on
$\Aalt_A(L,A)$.

Let $a\in A$. Since $\apartial a=0$,
\begin{align*}
0=dda&=  (\apartial +\partial^{t})(\apartial +\partial^t) a
\\
&=\left([\apartial,\partial^{t}]
+\partial^{t}\partial^{t}\right)a
\\
&=\left[t\cpartial +t\wedge t,a\right].
\end{align*}
Since $a$ is arbitrary, we conclude
\[
t\cpartial +t\wedge t =0,
\]
whence  
$\vartheta \colon L \to \mathrm{Der}(A|R)$, cf. \eqref{derA},
is compatible with the brackets.
Consequently, on $\Aalt(L,A)$ (beware: not on $\Aalt_A(L,A)$,
since this would not even make sense on $\Aalt_A(L,A)$)
\begin{equation}
[\apartial,\partial^{t}]
+\partial^{t}\partial^{t} =0 .
\label{sense}
\end{equation}

Next, let  $\varphi \in \mathrm{Hom}_A(sL,A)$,
and view $\varphi$ as a member of  $\mathrm{Hom}(sL,A)$.
Then
\begin{align*}
0=dd\varphi&=  (\apartial +\partial^{t})(\apartial +\partial^t) \varphi
\\
&=\left(\apartial\apartial +[\apartial,\partial^{t}]
+\partial^{t}\partial^{t}\right)\varphi
\\
&=\apartial\apartial \varphi +\left[t\cpartial +t\wedge t,\varphi\right]
\\
&=\apartial\apartial \varphi .
\end{align*}
Let $x \in \Lambda_3[sL]$. Then
\[
0=(dd\varphi)(x)=
(\apartial\apartial \varphi)(x)= 
\varphi(\cpartial\cpartial (x)) \in A .
\]
Since $\varphi$ is arbitrary, and since
\eqref{cano0} is injective,
we conclude that $\cpartial\cpartial (x) =0 \in sL$.
Since $x$ is arbitrary, we see that
$\cpartial\cpartial=0\colon \Lambda_3[sL]\to  \Lambda_1[sL] =sL$,
whence the bracket $\bra$ on
$L$ satisfies the Jacobi identity.

Let $a,b \in A$ and $\alpha \in sL$. 
Since $\apartial(a)=0$, the hypothesis of the corollary implies that
\begin{equation*}
b(t(\alpha))(a)= b(\partial^t a)(b\alpha)=(\partial^t a)(b\alpha)=(t(b\alpha))(a)
\end{equation*}
whence, since $a$ is arbitrary, 
$t$ is $A$-linear
or, in other words, the data satisfy
the axiom \eqref{1.1.a}.

Finally, let $a \in A$, $\alpha_1,\alpha_2 \in sL$,
and let $\varphi \in \mathrm{Hom}_A(sL,A)$.
Then
\begin{equation}
(\partial^t\varphi)(\alpha_1,a\alpha_2)
=a (\partial^t\varphi)(\alpha_1,\alpha_2) 
-\varphi(((t\alpha_1)(a))\alpha_2).
\end{equation}
Indeed,
\begin{equation*}
\begin{aligned}
(\partial^t\varphi)(\alpha_1,a\alpha_2)&=[t,\varphi](\alpha_1,a\alpha_2)
\\
&=a [t,\varphi](\alpha_1,\alpha_2)-((t\alpha_1)(a))\varphi(\alpha_2)
\\
&=a (\partial^t\varphi)(\alpha_1,\alpha_2) 
-\varphi(((t\alpha_1)(a))\alpha_2).
\end{aligned}
\end{equation*}
Moreover, 
\begin{equation}
\apartial \varphi(\alpha_1, a\alpha_2) =\varphi(\cpartial
(\alpha_1, a\alpha_2)).
\end{equation}
Since $\partial^t+\apartial$ passes to
$\Aalt_A(L,A)$, we conclude
\begin{align*}
(\partial^t+\apartial)\varphi(\alpha_1, a\alpha_2)
&=
a(\partial^t+\apartial)\varphi(\alpha_1, \alpha_2)
\\
a (\partial^t\varphi)(\alpha_1,\alpha_2) 
-\varphi(((t\alpha_1)(a))\alpha_2)
+\varphi(\cpartial
(\alpha_1, a\alpha_2))&=
a(\partial^t\varphi)(\alpha_1, \alpha_2)
+ a \varphi (\cpartial(\alpha_1, \alpha_2))
\\
\varphi(\cpartial
(\alpha_1, a\alpha_2)
-((t\alpha_1)(a))\alpha_2 -a \cpartial(\alpha_1, \alpha_2))
&=0 .
\end{align*}
Since $\varphi$ is arbitrary, the hypothesis of the corollary implies 
the identity \eqref{immlr}, viz.
\[
\cpartial
(\alpha_1, a\alpha_2)
=
((t\alpha_1)(a))\alpha_2 +a \cpartial(\alpha_1, \alpha_2).
\]
Theorem \ref{immediate4} implies that 
the data $(A,L,\bra,\vartheta)$ constitute a Lie-Rinehart algebra.
In particular, the identity \eqref{immlr} implies that the data satisfy the
axiom \eqref{1.1.b}.
\end{proof}

\begin{thm}
\label{olr}
Suppose that 
the canonical $A$-module morphism 
{\rm \eqref{cano0}}
from $L$ to its double $A$-dual 
$\mathrm{Hom}_A(\mathrm{Hom}_A(L,A),A)$ is an isomorphism
of $A$-modules.
Let 
$d$ 
be an $R$-linear derivation on $\Aalt_A(L,A)$
that turns the graded $A$-algebra
 $\Aalt_A(L,A)$
into a differential graded $R$-algebra.
Then  $(\Aalt_A(L,A),d)$
is the Maurer-Cartan algebra
associated to a (necessarily unique)
 Lie-Rinehart structure on $(A,L)$.
\end{thm}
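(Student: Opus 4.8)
The plan is to read the Lie-Rinehart data off the derivation $d$, to check that $d$ is then recovered as $\partial^{t}+\apartial$, and to invoke Corollary \ref{char1}. First I would reconstruct the anchor. Since $\Aalt_A^0(L,A)=A$ and $\Aalt_A^1(L,A)=\mathrm{Hom}_A(L,A)$, and since $d$ is an $R$-linear degree $1$ derivation of $\Aalt_A(L,A)$, the form $da$ lies in $\mathrm{Hom}_A(L,A)$ for each $a\in A$. I would define $\vartheta\colon L\to\mathrm{Der}(A|R)$ by $\vartheta(\alpha)(a)=(da)(\alpha)$. The Leibniz rule $d(ab)=a\,db+b\,da$ shows each $\vartheta(\alpha)$ is an $R$-derivation of $A$, and the $A$-linearity of $da$ in $\alpha$ shows that $\vartheta$ is $A$-linear, so that axiom \eqref{1.1.a} holds automatically. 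With $t=\vartheta\circ s^{-1}$ as in \eqref{tvartheta}, the operator $\partial^{t}$ built from \eqref{dt} then agrees with $d$ on $A=\Aalt_A^0(L,A)$ by construction.

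Next I would reconstruct the bracket, and this is the step where the full hypothesis on \eqref{cano0} (surjectivity, not merely injectivity) is used. For $\alpha,\beta\in L$, consider the $R$-linear functional $\Phi_{\alpha,\beta}$ on $\mathrm{Hom}_A(L,A)$ defined by $\Phi_{\alpha,\beta}(\varphi)=(d\varphi)(\alpha,\beta)-(\partial^{t}\varphi)(\alpha,\beta)$; in a Lie-Rinehart situation this would be precisely the term $(\apartial\varphi)(\alpha,\beta)$ of \eqref{dbra}, that is, $\pm\varphi([\alpha,\beta])$. Using that $d$ is a derivation, that $d(A)\subseteq\mathrm{Hom}_A(L,A)$, and the Leibniz rule for $\vartheta(\alpha)$ and $\vartheta(\beta)$, one verifies that $\Phi_{\alpha,\beta}$ is $A$-linear in $\varphi$ and hence represents an element of the double $A$-dual $\mathrm{Hom}_A(\mathrm{Hom}_A(L,A),A)$; by the hypothesis there is then a unique $[\alpha,\beta]\in L$ with $\varphi([\alpha,\beta])=\pm\Phi_{\alpha,\beta}(\varphi)$ for all $\varphi$. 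Antisymmetry of $\Phi_{\alpha,\beta}$ in $(\alpha,\beta)$ and $R$-linearity throughout make $\bra$ a skew-symmetric $R$-bilinear pairing on $L$, and by construction the operator $\apartial$ of \eqref{dbra} satisfies $(\partial^{t}+\apartial)\varphi=d\varphi$ for every $\varphi\in\Aalt_A^1(L,A)$.

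It remains to promote these low-degree identifications to an equality of operators and then to appeal to Corollary \ref{char1}. By Remark \ref{confirm}, the graded $A$-algebra $\Aalt_A(L,A)$ is generated (densely, in the appropriate topology, when $L$ is not finitely generated) by its elements of degrees $0$ and $1$. The derivation $\partial^{t}+\apartial$ of the ambient algebra $\Aalt(L,A)$ carries these generators back into $\Aalt_A(L,A)$, by the two preceding paragraphs (degree $0$ into $\mathrm{Hom}_A(L,A)$ and degree $1$ into $\Aalt_A^2(L,A)$), hence restricts to a derivation of $\Aalt_A(L,A)$; since it agrees with $d$ on a generating set it equals $d$ throughout. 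Thus $d=\partial^{t}+\apartial$ passes to an $R$-linear differential on $\Aalt_A(L,A)$, so Corollary \ref{char1} shows that $(A,L,\bra,\vartheta)$ constitutes a Lie-Rinehart algebra and that $(\Aalt_A(L,A),d)$ is its Maurer-Cartan algebra. Uniqueness is immediate: any Lie-Rinehart structure $(\bra',\vartheta')$ on $(A,L)$ whose Maurer-Cartan differential is $d$ satisfies $\vartheta'(\alpha)(a)=(da)(\alpha)=\vartheta(\alpha)(a)$ and $\varphi([\alpha,\beta]')=\varphi([\alpha,\beta])$ for all $a$ and all $\varphi$, so injectivity of \eqref{cano0} forces $(\bra',\vartheta')=(\bra,\vartheta)$.

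The hard part will be the verification that $\Phi_{\alpha,\beta}$ is $A$-linear in $\varphi$, the point at which the derivation identity for $d$ and the inclusion $d(A)\subseteq\mathrm{Hom}_A(L,A)$ must be played off against the Eilenberg-Koszul signs in \eqref{dt} and \eqref{dbra}, together with the density argument behind Remark \ref{confirm} that is needed to pass from low-degree agreement to the global equality $d=\partial^{t}+\apartial$. The remainder is routine bookkeeping or a direct appeal to Corollary \ref{char1}.
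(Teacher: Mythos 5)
Your proposal follows essentially the same route as the paper's own proof: read off $t$ (hence $\vartheta$) as the adjoint of $d$ on $A$, recover the bracket from $d-\partial^{t}$ in degree $1$ via the double-dual isomorphism (your $\Phi_{\alpha,\beta}$ is exactly the paper's $\widetilde\partial^{\bra}$), and then invoke Corollary \ref{char1}. The only difference is that you spell out two points the paper compresses into ``by construction''---the generation/density argument of Remark \ref{confirm} needed to identify $d$ with $\partial^{t}+\apartial$ in degrees $\geq 2$, and the uniqueness claim---and both are filled in correctly.
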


For example, the hypothesis of Theorem \ref{olr}
as well as that of Corollary \ref{char1}
is satisfied when
$L$ is a finitely generated projective $A$-module.

See \cite{MR2103009} (Lemma 2.2.15) for a traditional proof.
We now sketch a proof in the language and terminology
of the proof of Corollary \ref{char1} above.
An extension of this reasoning yields the proof of a more general result,
Theorem \ref{olr2} below.

\begin{proof}[Proof of Theorem {\rm \ref{olr}}]
Let
\begin{equation}
t\colon sL \longrightarrow \mathrm{Der}(A|R)
\subseteq \mathrm{End}(A,A)
\label{adjoin0}
\end{equation}
be the adjoint 
of the composite
\[
d\colon A \longrightarrow \mathrm{Hom}_A(sL,A)
\subseteq \mathrm{Hom}(sL,A)
\]
of the derivation $d$
with the injection into 
$\mathrm{Hom}(sL,A)$ as displayed.
Then  the derivation
\begin{equation}
\partial^{t}\colon 
\mathrm{Hom}_A(\Lambda[sL],A)
\longrightarrow
\mathrm{Hom}(\Lambda[sL],A)
\end{equation}
is defined, and hence the $R$-module morphism
$\vartheta$, cf.
\eqref{derA}.

Likewise,
 notice that the composite 
\begin{equation}
d\colon 
\mathrm{Hom}_A(sL,A)
\longrightarrow
\mathrm{Hom}_A(\Lambda_2[sL],A)
\subseteq
\mathrm{Hom}(\Lambda_2[sL],A)\cong \Aalt^2(L,A)
\end{equation}
is defined, and
let
\begin{equation}
\widetilde  \partial^{\bra}
=d-\partial^{t}
\colon 
\mathrm{Hom}_A(sL,A)
\longrightarrow
\mathrm{Hom}(\Lambda_2[sL],A) .
\end{equation}
Consider the pairing
\begin{align*}
L \otimes L \otimes \mathrm{Hom}_A(L,A) &\longrightarrow A\\
\alpha\otimes\beta 
\otimes \varphi & \longmapsto \pm
(\widetilde \partial^{\bra}(\varphi\circ s^{-1}))(s\alpha, s\beta ).
\end{align*}
Since the map \eqref{cano0} 
from $L$ to its double $A$-dual is an $A$-module
isomorphism, this pairing induces 
a skew symmetric $R$-linear bracket $\bra$ on $L$, and hence
a coderivation
\[
\cpartial
\colon \Lambda[sL] \longrightarrow \Lambda[sL].
\]

By construction, the pieces of structure 
$(A,L)$,  $\vartheta$, cf. \eqref{derA}, and  $\bra$
satisfy the hypotheses of Corollary \ref{char1}.
Hence the pair $(A,L)$, endowed with $\vartheta$ 
and the bracket $\bra$,
constitutes a Lie-Rinehart algebra.
Still by construction, the differential graded $R$-algebra
$(\Aalt_A(L,A),d)$
is the Maurer-Cartan algebra associated to that Lie-Rinehart algebra.
\end{proof}

\begin{rema}
\label{curious}
Corollary \ref{char1} has the following 
consequence:
If the derivation
${d=\partial^{t} +\apartial}$ on
$\Aalt(L,A)$ restricts to an $R$-linear differential on
$\Aalt_A(L,A)\subseteq \Aalt(L,A)$,
it is necessarily a differential on all of 
$\Aalt(L,A)$.
\end{rema}

\begin{rema} \label{consequences}
Let $\g$ be an ordinary Lie algebra.
The identity \eqref{twc}  
characterizing
a Lie algebra twisting
fixes the operator $\cpartial$ on $\Lambda[s\g]$.
Indeed, the desuspension
$t=s^{-1} \colon s\g \to \g$ 
is the universal Lie algebra twisting cochain
for $\g$.  Let
$x,y\in\g$.
Since 
$\dho t=t\cpartial$ and since
\[
(t\wedge t)(s x,s y)=\tfrac 12 \bra \circ t\otimes t(s x,s y)
= [y,x]
\]
we find
\begin{align}
t\cpartial(s x,s y)&=-t\wedge t (s x,s y)=[x,y]
\\
\cpartial(s x,s y)&=s[x,y].
\end{align}
\end{rema}

\begin{rema}\label{derham}
Let $M$ be a smooth manifold.
In the standard formalism
the de Rham differential $d$ is given by the formulas
\begin{align}
df(X)&=X(f)
\label{standard1}
\\
d\alpha(X,Y)&=X\alpha(Y)-Y\alpha(X)-\alpha[X,Y]
\label{standard2}
\end{align}
etc.
Here $f$ is a smooth function on $M$, $X$ and $Y$ are smooth vector fields,
and $\alpha$ is a smooth 1-form.
While in degree 1, the sign of \eqref{standard1}
is the same as that of the corresponding operator
$\partial^t$ ($=\partial^t + \cpartial$), cf. \eqref{dt},
in degree 1, the sign of \eqref{standard2}
is opposite to that of $\partial^t + \cpartial$.
The sign in \eqref{standard2} arises by abstraction from
the naive evaluation of a 2-form of the kind $dfdh$
on a pair of vector fields by means of
the product formula
\[
\alpha\wedge \beta(X,Y)= \alpha(X)\beta(Y)-\beta(X)\alpha(Y)
\]
involving two 1-forms.
However when we systematically exploit the
Eilenberg-Koszul convention, the product formula
takes the form
\[
\alpha\wedge \beta(X,Y)= -\alpha(X)\beta(Y)+\beta(X)\alpha(Y)
\]
and accordingly, the resulting sign coincides with that of the operator
 $\partial^t + \cpartial$.
\end{rema}

\section{Some technicalities}
We work over a commutative ground ring $R$ 
that contains the rational numbers as a subring. 
Graded objects are graded over the integers.
We understand a differential
as an operator that lowers degree by $1$, and we then indicate the 
degree by subscripts if need be. At times it 
is convenient to switch notationally to superscripts;
here our convention is $A^q=A_{-q}$, so that the differential
takes the form $d\colon A^q \to A^{q+1}$.
Henceforth \lq graded\rq\ means {\em externally graded\/},
cf. e.g., \cite{maclaboo} (p. 175 ff.), that is, we work only with
homogeneous constituents.

\subsection{Hom-differential}
Given two chain complexes $C_1$ and $C_2$, we
write the Hom-differential on $\mathrm{Hom}(C_1,C_2)$
as $\dho$; this differential turns
 $\mathrm{Hom}(C_1,C_2)$ into a 
 chain complex.
We recall that, given a homogeneous member $f$ of  $\mathrm{Hom}(C_1,C_2)$,
the value  $\dho(f)$ is given by
\begin{equation}
\dho(f) =d \circ f + (-1)^{|f|+1} f \circ d.
\label{homd}
\end{equation}

\subsection{Hopf algebra of forms and beyond}
\label{hopfandbeyond}

Later in the paper we will develop an sh Lie-Rinehart
concept that involves sh Lie-algebras.
An sh Lie algebra structure is characterized 
in terms of a cofree differential graded cocommutative coalgebra.
Under our circumstances, the ground ring is assumed to contain the rational 
numbers as a subring, and to develop the sh Lie-Rinehart concept
later in the paper we must express things in terms of the
coalgebra that underlies the symmetric Hopf algebra. We now
explain the requisite technical details.

Let $V$ be a graded $R$-module
and let $\Sigm[V]$  denote the graded symmetric $R$-algebra
on $V$.
The diagonal map of $V$ induces a graded cocommutative
diagonal map on $\Sigm[V]$, indeed,
the familiar shuffle diagonal map, that turns
$\Sigm[V]$ into a graded bialgebra.
Let $\Tc[V]$ denote the graded tensor coalgebra on $V$ in the category of $R$ modules, and let $\Sigmc[V] \subseteq \Tc[V]$
be the largest graded cocommutative subcoalgebra of  $\Tc[V]$
containing $V$, cf. \cite{MR0436178} (p.~ 338) where this construction
is taken over a field of characteristic different from 2.
The universal property of
$\Tc[V]$ entails the existence of a unique extension
of the identity of $V$ to a morphism 
$\Sigm[V] \to \Tc[V]$ 
of graded coalgebras and, since
$\Sigm[V]$ is cocommutative, the values of
that morphism lie in $\Sigmc[V]$, that is,
the
identity of $V$ induces a
canonical morphism
\begin{equation}
\Sigm[V]
\longrightarrow
\Sigmc[V]
\label{cano1}
\end{equation}
of graded cocommutative coalgebras.
Since $R$ is assumed to contain 
the rational numbers as a subring,
\eqref{cano1} is an isomorphism.
Indeed, in degree $n$,
the composite
\[
\begin{CD}
V^{\otimes n}
@>{\mathrm{mult}}>>
\Sigm^n [V]
@>{\tfrac 1{n!}}>>
\Sigm^n [V]
\end{CD}
\]
of the multiplication map $\mathrm{mult}$
with multiplication by $\tfrac 1{n!}$,
restricted to
$\Sigmc_n [V] \subseteq V^{\otimes n}$,
yields the inverse of \eqref{cano1}.
The coalgebra $\Sigmc[V]$ is
the cofree graded cocommutative coalgebra on $V$
in the category of $R$-modules.
The addition of $V$ induces a graded commutative multiplication
that turns $\Sigmc[V]$ into a graded bialgebra,
and \eqref{cano1}
is an isomorphism of graded bialgebras.
For both  $\Sigm[V]$ and $\Sigmc[V]$, multiplication on $V$ by $-1$
induces an antipode turning  each of $\Sigm[V]$ and $\Sigmc[V]$
into a graded Hopf algebra over $R$.

The dual of \eqref{cano1} has the form
\begin{equation}
\mathrm{Hom}(\Sigmc[V],R)
\longrightarrow
\mathrm{Hom}(\Sigm[V],R),
\label{cano2}
\end{equation}
and $\mathrm{Hom}(\Sigm[V],R)$
is traditionally interpreted as an algebra of multilinear
graded symmetric
forms (alternating when $V$ is concentrated in odd degrees and symmetric
in the usual ungraded sense when $V$ is concentrated in even degrees).
Thus the algebra structure of the algebra
$\mathrm{Hom}(\Sigm[V],R)$
 of multilinear  graded symmetric forms is induced by the
shuffle diagonal on $\Sigm[V]$.

\begin{rema} To the browsing reader, the distinction between
$\Sigm[V]$ and $\Sigmc[V]$ might appear pedantic, so here is one hint at 
the difference between the two:
An $R$-linear map $q\colon \Sigmc_2[V]\to R$
on the degree $2$ constituent of  $\Sigmc[V]$
is a (graded) quadratic form, and the composite with
the canonical map
\[
V\otimes V  \longrightarrow \Sigm_2[V]\longrightarrow \Sigmc_2[V]
\]
is the associated (graded) polar form.
\end{rema}

\begin{rema} \label{dividedpowers}
The graded coalgebra $\Sigmc[V]$
is the graded coalgebra
that underlies the divided power Hopf algebra $\Gamma[V]$ on $V$, cf.
\cite{PD}, \cite{MR0065162} (\S \S 17 and 18).
On the other hand,
 the
assignment to $x\in V$ of $\gamma_k(x)= \frac 1{k!}x^k$ ($k \geq 0$)
turns $\Sigm[V]$ into a divided power Hopf algebra. In terms of these
divided powers,
the diagonal map $\Delta$ of $\Sigm[V]$ is given by the identity
\[
\Delta(\gamma_n(x)) = \sum_{j+k=n}\gamma_j(x)\otimes \gamma_k(x), \ n \geq 1,
\ x \in V,
\]
Now
\eqref{cano1} is an isomorphism of divided power  Hopf algebras.
\end{rema}

In terms of the notation $V^*=\mathrm{Hom}(V,R)$,
when $V$ is of finite type
(finitely generated in each degree),
\eqref{cano2}
can be written as
\begin{equation}
\Sigm[V^*]
\longrightarrow
\Sigmc[V^*]
\label{cano3}
\end{equation}
and is formally exactly of the same kind as \eqref{cano1},
with $V^*$ substituted for $V$.

\subsection{Perturbations}
\label{perturbations}
Let $(C,d)$ be a chain complex.
Recall that a {\em perturbation\/} of $d$
is an operator $\partial$ on $C$ such that
$d+\partial$ is a differential.
When $(C,d)$ is a differential graded coalgebra
with counit $\varepsilon \colon C \to R$, 
a perturbation $\partial$ of $d$ 
that is also a coderivation
is
a {\em coalgebra perturbation\/}.
Likewise when $(A,d)$ is a differential graded algebra, 
a perturbation $\partial$ of $d$ 
that is also a derivation
is
an {\em algebra perturbation\/}.

Let $C$ be a coaugmented differential graded coalgebra,
the coaugmentation being written as $\eta\colon R \to C$,
write the resulting {\em coaugmentation filtration\/} as
\begin{equation}
R=C_0\subseteq C_1 \subseteq \ldots \subseteq C_j \subseteq \ldots,
\label{coaugfilt}
\end{equation}
and suppose that $C$ is {\em cocomplete\/}, that is, $C=\cup C_j$.
It then makes sense to require that
a coalgebra perturbation {\em lowers filtration\/}.
We warn the reader that, to avoid an orgy of notation, 
here the subscripts refer to the filtration degree
and not to the ordinary degree.
Henceforth, whenever we use subscripts of this kind,
it will be clear from the context whether
filtration degree or ordinary degree is intended.

Let $\cpartial$ be a coalgebra perturbation that lowers filtration.
Suppose that $\cpartial$ can be written
in the form
\begin{equation}
\cpartial= \cpartial^1 + \ldots + \cpartial^j + \ldots
\label{cpartial}
\end{equation}
such that $\cpartial^j$, when non-zero, 
lowers filtration by $j$ and not by $j+1$ ($j \geq 1$).
We will then say that $\cpartial$ is a {\em filtered coalgebra perturbation\/}.
The bracket subscript is intended as a mnemonic that, for our purposes,
later in the paper, 
such a perturbation characterizes an sh Lie algebra structure.
In particular,
given a filtered coalgebra perturbation $\cpartial$,
the counit $\varepsilon \colon C \to R$ being a morphism of chain complexes,
for $j \geq 1$, the constituent $\cpartial^j$ vanishes on $C_j$ whence,
since $C$ is cocomplete, the infinite sum \eqref{cpartial}
converges naively in the sense that, applied to a specific element of $C$,
only finitely many terms are non-zero.

Given a
filtration decreasing coderivation
of degree $-1$
of the kind \eqref{cpartial} such that 
 $\cpartial^j$, when non-zero, 
lowers filtration by $j$ and not by $j+1$ ($j \geq 1$), save that 
$\cpartial$ is not required to be a perturbation of the coalgebra differential
$d^0$ of $C$,
we refer to a {\em filtered degree $-1$ filtration decreasing
coderivation\/}.
The wording of Theorem \ref{char2} involves a \ppt;
Theorem \ref{char2} is 
a crucial step for Theorems \ref{olr2}, \ref{multishp}, \ref{shlrchar0}, and
\ref{shlrchar}.
For our purposes, a \ppt\  generalizes a skew-symmetric
bracket which is not assumed to be a Lie bracket
(i.e., does not necessarily satisfy the Jacobi identity).
For later reference, we spell out the following.
\begin{prop}
\label{laterref1}
A \ppt\  $\cpartial$  is
a filtered coalgebra perturbation of $d^0$ if and only if,
for $j \geq 1$,
\begin{align}
d^0\cpartial^{j} + \cpartial^{j} d^0 +
\sum_{k=1}^{j-1} \cpartial^k\cpartial^{j-k}
&=0. 
\label{separate11}
\end{align}
\end{prop}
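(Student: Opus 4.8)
The plan is to expand the condition $(d^0+\cpartial)^2=0$ and collect terms according to how much they lower the coaugmentation filtration. Write $\cpartial = \sum_{j\ge 1}\cpartial^j$, where $\cpartial^j$ lowers filtration by exactly $j$ (when non-zero), and recall that $d^0$ preserves filtration degree (it is the coalgebra differential of $C$, and the coaugmentation filtration \eqref{coaugfilt} is respected by $d^0$ since $\eta$ is a chain map). Squaring,
\[
(d^0+\cpartial)^2 = (d^0)^2 + \bigl(d^0\cpartial + \cpartial d^0\bigr) + \cpartial^2 = d^0\cpartial + \cpartial d^0 + \cpartial\cpartial,
\]
using $(d^0)^2=0$. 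Now substitute $\cpartial=\sum_j\cpartial^j$ and sort by filtration-lowering degree: the part that lowers filtration by exactly $j$ is
\[
d^0\cpartial^j + \cpartial^j d^0 + \sum_{k=1}^{j-1}\cpartial^k\cpartial^{j-k},
\]
since $d^0\cpartial^j$ and $\cpartial^j d^0$ lower filtration by $j$ (as $d^0$ is filtration-preserving), while $\cpartial^k\cpartial^{j-k}$ lowers it by $k+(j-k)=j$.

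The key technical point to justify is that the vanishing of $(d^0+\cpartial)^2$ is \emph{equivalent} to the vanishing of each homogeneous constituent separately. One direction is immediate: if each \eqref{separate11} holds, summing over $j\ge 1$ gives $(d^0+\cpartial)^2=0$ (the sum converging naively, as explained before the proposition, because on any fixed element only finitely many $\cpartial^j$ contribute). For the converse I would argue as follows: suppose $(d^0+\cpartial)^2=0$ but some constituent, say the one for $j=j_0$, is non-zero; choose $j_0$ minimal. Apply $(d^0+\cpartial)^2$ to an element $x\in C_m$ and project onto the associated graded piece $C_{m-j_0}/C_{m-j_0-1}$; since all constituents with $j<j_0$ vanish by minimality and those with $j>j_0$ land in $C_{m-j_0-1}$, this projection equals the value of the $j_0$-constituent of \eqref{separate11} on $x$, forcing it to vanish for all $x$ — a contradiction. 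Equivalently, and more cleanly, one observes that $C$ being cocomplete means $\bigcap_j C_j$-type obstructions do not arise, so the bigraded decomposition (by ordinary degree and by filtration-lowering degree) of the operator $(d^0+\cpartial)^2$ is genuine, and an operator is zero iff all its bihomogeneous components are.

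The main obstacle is purely bookkeeping: making precise that $d^0$ strictly preserves filtration (so that $d^0\cpartial^j+\cpartial^j d^0$ contributes only to filtration-lowering degree $j$) and that the formal sum $\cpartial=\sum\cpartial^j$ interacts with $d^0$ termwise without convergence subtleties. Both follow from cocompleteness, $C=\bigcup_j C_j$, together with the standing assumption that $\cpartial^j$ vanishes on $C_j$; I would state these observations explicitly and then the identity \eqref{separate11} drops out by comparing bihomogeneous components. No deeper idea is needed — the content of the proposition is exactly the observation that the Maurer--Cartan-type equation $(d^0+\cpartial)^2=0$ unravels into the filtered hierarchy \eqref{separate11}, which is the algebraic shadow of the $L_\infty$ (here: sh Lie) relations.
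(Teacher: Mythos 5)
The paper states this proposition without proof (it is ``spelled out for later reference''), so there is nothing to compare against directly; the closest the paper comes is the componentwise derivation of \eqref{separate11} inside the proof of Theorem \ref{char2} and Remark \ref{instructive}, which works degree by degree on $\Sigm[V]$. Your ``if'' direction is fine: summing \eqref{separate11} over $j$ gives $(d^0+\cpartial)^2=0$, with naive convergence guaranteed by cocompleteness. The gap is in the ``only if'' direction. Writing $D_j=d^0\cpartial^j+\cpartial^jd^0+\sum_{k=1}^{j-1}\cpartial^k\cpartial^{j-k}$, your projection of $D(x)$ onto $C_{m-j_0}/C_{m-j_0-1}$ for minimal $j_0$ with $D_{j_0}\neq 0$ only shows that $D_{j_0}(x)\in C_{m-j_0-1}$, i.e.\ that $D_{j_0}$ induces the zero map on the associated graded object and hence lowers filtration by $j_0+1$; it does not show $D_{j_0}=0$, which is what \eqref{separate11} asserts. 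Iterating the projection one step deeper brings $D_{j_0+1}$ back into play and, in the limit, merely reproduces $D=0$. Your ``cleaner'' reformulation begs the same question: a decomposition of an operator by \emph{exact} amount of filtration lowering is not a direct-sum decomposition for a filtration (as opposed to a grading), so cocompleteness alone does not make the components of $(d^0+\cpartial)^2$ ``genuine'' or force them to vanish individually. In principle one could have $D_{j_0}=-D_{j_0+1}\neq 0$ with $D_{j_0}$ accidentally lowering filtration by $j_0+1$.

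The missing ingredient is the extra structure present in all of the paper's applications, where $C=\Sigm[V]$ (equivalently $\Sigmc[V]$) and the coaugmentation filtration is induced by the word-length grading $C=\bigoplus_n\Sigm^n[V]$. There $d^0$ preserves word length, $\cpartial^j$ lowers it by exactly $j$, and every operator in sight is a coderivation of a cofree cocommutative coalgebra, hence determined by its corestriction to $V$. The corestriction of $D_j$ is supported on the summand $\Sigm^{j+1}[V]$, and these summands are independent, so $\sum_j D_j=0$ forces each corestriction, hence each $D_j$, to vanish. If you want the proposition for a general cocomplete coaugmented $C$, you should either add a hypothesis that the filtration splits compatibly with the operators (or argue via corestrictions when $C$ is cofree), or weaken the conclusion to an identity in the associated graded; as written, your argument establishes only the latter.
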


For the benefit of the reader we note that
\eqref{separate11} reads
\begin{align}
d^0\cpartial^1 + \cpartial^1 d^0 &=0 
\label{benefit1}
\\
d^0\cpartial^2 + \cpartial^2 d^0 +
\cpartial^1\cpartial^1&=0 
\label{benefit2}
\\
d^0\partial^3_{\bra} + \partial^3_{\bra} d^0 +
\cpartial^1\cpartial^2+\cpartial^2\cpartial^1&=0 
\end{align}
etc.

\subsection{Filtered Lie algebra twisting cochains}
\label{liealgtc}

Let $C$ be a cocomplete 
coaugmented differential graded cocommutative coalgebra and
$\g$ a differential graded Lie algebra.
Let
\begin{equation}
t = t_1+t_2 + \ldots \colon C \longrightarrow \g
\label{ltwistingc}
\end{equation}
be a Lie algebra twisting cochain such that, for $j \geq 1$,
the constituent $t_j$, if non-zero, is zero on the constituent
$C_{j-1}$ but not on the constituent $C_j$
of the coaugmentation filtration  of $C$, cf. \eqref{coaugfilt}.
Since $C$ is cocomplete,
the infinite sum \eqref{ltwistingc}
converges naively in the sense that, applied to a specific element of $C$,
only finitely many terms are non-zero.
We will then say that $t$ is a {\em filtered Lie algebra twisting cochain\/}.

\begin{rema}
Write the differential graded Lie algebra
$\mathrm{Hom}(C,\g)$ as $\mathcal L$,
write $\mathcal L^0=\mathcal L$
 and, for $j \geq 0$, let
\[
\mathcal L^{j+1}=\ker(\mathrm{Hom}(C,\g) \longrightarrow \mathrm{Hom}(C_j,\g)).
\]
The coaugmentation filtration of $C$ induces the descending filtration
\begin{equation}
\mathcal L=\mathcal L^0\supseteq \mathcal L^1 \supseteq \ldots \supseteq \ldots
\label{descend}
\end{equation}
of differential graded Lie algebras. The Lie algebra twisting cochain 
\eqref{ltwistingc}
being filtered means that, for $j \geq 1$, the constituent $t_j$ lies in
$\mathcal L^j$ but not in $\mathcal L^{j+1}$.
\end{rema}

We will refer to 
a homogeneous morphism of degree $-1$
of the kind \eqref{ltwistingc}
 that is not necessarily a Lie algebra twisting cochain
as a {\em filtered degree $-1$ morphism\/}.
We need this terminology  to be able to phrase Theorem \ref{char2},
a crucial step for Theorems \ref{olr2}, \ref{multishp}, \ref{shlrchar0}, and
\ref{shlrchar}.

For later reference, we spell out the following.
\begin{prop}
\label{laterref2}
Let $(C,d^0)$ be a coaugmented differential graded
cocommutative coalgebra, and let
$\cpartial$ be a filtered coalgebra perturbation of $d^0$.
A \fdm\  
\[
t=t_1+t_2+ \ldots \colon C \longrightarrow \g
\] 
is a 
Lie algebra twisting cochain
\[
t\colon (C,d^0+\cpartial)  \longrightarrow
\g
\]
if and only if, for $j \geq 1$,
\begin{align}
d_0 t^{j} + t^{j}d^0+
\sum_{k=1}^{j-1}
t^k\cpartial^{j-k}+ 
\sum_{k=1}^{j-1}
t^k\wedge t^{j-k} &=0 .
\label{separate22}
\end{align}
\end{prop}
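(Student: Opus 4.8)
The plan is to run the same weight bookkeeping that underlies Proposition \ref{laterref1}, now applied to the Lie algebra twisting cochain equation rather than to the equation $(d^0+\cpartial)^2=0$ of a coalgebra perturbation, and then to read off \eqref{separate22} as the family of weight-homogeneous components of a single identity. First I would unpack the definition. Let $\dho_{\cpartial}$ denote the Hom-differential on $\mathrm{Hom}(C,\g)$ built from the perturbed coalgebra differential $d^0+\cpartial$ on the source and from the differential $d_0$ of $\g$ on the target. Since $|t|=-1$, formula \eqref{homd} gives
\begin{equation*}
\dho_{\cpartial}t \;=\; d_0\circ t + t\circ(d^0+\cpartial) \;=\; \dho t + t\circ\cpartial ,
\end{equation*}
where $\dho$ is the Hom-differential attached to $d^0$ alone. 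Hence $t$ is a Lie algebra twisting cochain $(C,d^0+\cpartial)\to\g$ precisely when $t\circ\eta=0$ and $\dho t + t\circ\cpartial + t\wedge t = 0$, with the convention $t\wedge t=\tfrac12[t,t]$ for the cup bracket on $\mathrm{Hom}(C,\g)$; the condition $t\circ\eta=0$ merely records that $t$ has no weight-zero constituent, i.e. $t=t^1+t^2+\cdots$.

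The second step is to split this single identity into its weight-homogeneous pieces. I would use the weight grading of $C$ (the number of cogenerators when $C$ is cofree, and the associated graded of the coaugmentation filtration \eqref{coaugfilt} in general), along which $d^0$ is weight-preserving, each $\cpartial^m$ strictly lowers weight by $m$, the diagonal $\Delta$ is weight-additive, and the constituents of the filtered morphism $t$ may be taken weight-homogeneous, $t^k$ of weight $k$ (with no weight-zero term, by $t\circ\eta=0$). Then $d_0\circ t^j$ and $t^j\circ d^0$ are of weight $j$, so together they form the weight-$j$ part of $\dho t$; the composite $t^k\circ\cpartial^m$ is of weight $k+m$, hence feeds into weight $j$ exactly when $m=j-k$ with $1\le k\le j-1$; and $t^k\wedge t^{j-k}$, the $(k,j-k)$-constituent of the cup-square $t\wedge t=\tfrac12[t,t]$, is of weight $j$ because $\Delta$ is weight-additive. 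Collecting the weight-$j$ part of $\dho t+t\circ\cpartial+t\wedge t$ therefore reproduces the left-hand side of \eqref{separate22} verbatim.

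Finally, since $C$ is cocomplete the coaugmentation filtration is exhaustive, so a sum of genuinely weight-homogeneous operators of strictly increasing weight vanishes if and only if each summand vanishes, and every series in sight converges naively because $t$ and $\cpartial$ are filtered. Consequently the twisting cochain equation $\dho t+t\circ\cpartial+t\wedge t=0$ holds if and only if its weight-$j$ component vanishes for all $j\ge1$, i.e. if and only if \eqref{separate22} holds for all $j\ge1$; summing \eqref{separate22} over $j$ recovers the twisting cochain equation and supplies the converse. I expect the only genuine effort to be the sign bookkeeping---checking that the Koszul signs in \eqref{homd} for a degree $-1$ morphism, in the cup bracket on $\mathrm{Hom}(C,\g)$, and in $t\wedge t=\tfrac12[t,t]$ combine to yield exactly the coefficients displayed in \eqref{separate22}---but this is routine once the Eilenberg-Koszul convention is imposed, and it runs in exact parallel with the computation behind \eqref{separate11} in Proposition \ref{laterref1}.
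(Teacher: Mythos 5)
Your proposal is correct and matches the paper's (implicit) argument: the paper states Proposition \ref{laterref2} without proof, treating it as the immediate decomposition of the single twisting-cochain identity $d_0t+td^0+t\cpartial+t\wedge t=0$ into its filtration-homogeneous components, which is precisely the bookkeeping you carry out. The one point worth making explicit is that separating the components in the ``only if'' direction uses that the constituents $t^k$, $\cpartial^m$ are genuinely weight-homogeneous, which holds in all of the paper's applications since there $C=\Sigm[V]\cong\Sigmc[V]$ carries a weight grading refining the coaugmentation filtration.
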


Explicitly, the identities \eqref{separate22} take the form
\begin{align}
d_0 t^1 + t^1d^0&=0
\label{benefit11}
\\
d_0 t^2 + t^2d^0+t^1\cpartial^1+ t^1 \wedge t^1 &=0
\label{benefit12}
\\
d_0 t^3 + t^3d^0+t^1\cpartial^2+ 
t^2\cpartial^1
+[t^1, t^2] &=0
\\
d_0 t^4 + t^4d^0+t^1\partial^3_{\bra}+ 
t^2\cpartial^2+
t^3\cpartial^1
+[t^1, t^3] +t^2\wedge t^2 &=0,
\end{align}
etc.
For clarity, we note that, as for the
term
$\sum_{k=1}^{j-1}
t^k\wedge t^{j-k}$
in \eqref{separate22},
this identity is a concise version of the two identities
\begin{align}
d_0 t^{2j} + t^{2j}d^0+
\sum_{k=1}^{2j-1}
t^k\cpartial^{2j-k}+ 
\sum_{k=1}^{j-1}
[t^k, t^{2k-1}] +t^j\wedge t^j &=0\ (j \geq 1)
\\
d_0 t^{2j+1} + t^{2j+1}d^0+
\sum_{k=1}^{2j}
t^k\cpartial^{2j+1-k}+ 
\sum_{k=1}^{j}
[t^k, t^{2k-1}]&=0 \ (j \geq 0).
\end{align}

\subsection{Lie-Rinehart structures associated to 
$\mathrm{Hom}(C,A)$}
Let $C$ be differential graded cocommutative coalgebra
and $A$ a differential graded commutative algebra.
The Hom-differential $\dho$
and the cup product
turn $\mathrm{Hom}(C,A)$ into a differential graded commutative algebra.
Likewise
$\mathrm{Hom}(C,\mathrm{Der}(A|R))$
acquires a differential graded $R$-Lie algebra
structure and a
differential graded $\mathrm{Hom}(C,A)$-module structure. Furthermore,
with this structure of mutual interaction, the pair
\begin{equation}
(\mathcal A,\mathcal L) =
\left(\mathrm{Hom}(C,A), \mathrm{Hom}(C,\mathrm{Der}(A|R))\right)
\label{mutual2}
\end{equation}
is a differential graded Lie-Rinehart algebra.

Let $t\in
 \mathrm{Hom}(C,\mathrm{Der}(A|R))$
be homogeneous of degree $-1$, at first not necessarily
a Lie algebra twisting cochain $C \to \mathrm{Der}(A|R)$.
The morphism $t$
determines a derivation
\begin{equation}
\partial^t\colon \mathrm{Hom}(C,A) \longrightarrow \mathrm{Hom}(C,A).
\end{equation}
Indeed,
consider
the  universal differential graded algebra
$\mathrm U_A[\mathrm{Der}(A|R)]$
associated to the differential graded Lie-Rinehart algebra
$(A,\mathrm{Der}(A|R))$,
and let $\bra$ denote
 the ordinary commutator 
bracket of $\mathrm U_A[\mathrm{Der}(A|R)]$.
In terms of this bracket, we write
the action of 
$\mathrm{Der}(A|R)$ on $A$
as the bracket operation
\begin{equation}
\mathrm{Der}(A|R) \otimes A \longrightarrow A,\ 
(\delta,a) \longmapsto [\delta,a].
\label{bracket1}
\end{equation}
This bracket, in turn, induces a bracket pairing
\begin{equation}
\bra\colon 
 \mathrm{Hom}(C,\mathrm{Der}(A|R))
\otimes
 \mathrm{Hom}(C,A)
\longrightarrow
 \mathrm{Hom}(C,A),
\label{bracketpairing}
\end{equation}
and the operator $\partial^t$ is given by the expression
\begin{equation} 
\partial^t(\alpha)=[t,\alpha],\ \alpha \in  \mathrm{Hom}(C,A).
\label{expres1}
\end{equation}

We will now suppose that $C$ is coaugmented.

\begin{prop}
The degree $-1$ morphism $t\colon C \to \mathrm{Der}(A|R)$ 
of the underlying graded $R$-modules
is a Lie algebra twisting cochain
if and only if the derivation 
$\partial^t$ is an algebra perturbation of the Hom-differential
$\dho$ on  $\mathrm{Hom}(C,A)$, that is, if and only if
$\dho+\partial^t$ is a(n algebra) differential on
 $\mathrm{Hom}(C,A)$. 
\end{prop}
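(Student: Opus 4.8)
The plan is to recognize the statement as an instance of the familiar fact that twisting a differential by a degree $-1$ element introduces a curvature equal to the Maurer-Cartan expression, here carried out inside the differential graded Lie-Rinehart algebra $(\mathcal A,\mathcal L)=(\mathrm{Hom}(C,A),\mathrm{Hom}(C,\mathrm{Der}(A|R)))$ of \eqref{mutual2}. Being a Lie algebra twisting cochain means precisely that $t$, viewed as a degree $-1$ element of the differential graded Lie algebra $\mathcal L$, satisfies the Maurer-Cartan equation $\dho t+t\wedge t=0$ (together with the standing normalization $t\circ\eta=0$ inherited from the coaugmented setting). Since the Lie-Rinehart action of $\mathcal L$ on $\mathcal A$ in \eqref{mutual2} is by derivations, $\partial^{t}=[t,\,\cdot\,]$ is automatically a derivation of $\mathcal A$ of degree $-1$; hence ``$\partial^{t}$ is an algebra perturbation of $\dho$'' reduces to the single requirement $(\dho+\partial^{t})^{2}=0$, and the whole argument comes down to computing this square.

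First I would expand $(\dho+\partial^{t})^{2}=\dho^{2}+\dho\partial^{t}+\partial^{t}\dho+(\partial^{t})^{2}$ and use $\dho^{2}=0$. Since $(\mathcal A,\mathcal L)$ is a differential graded Lie-Rinehart algebra, $\dho$ obeys $\dho(t(a))=(\dho t)(a)-t(\dho a)$ for $a\in\mathcal A$ (the sign $-$ because $|t|=-1$), which collapses $\dho\partial^{t}+\partial^{t}\dho$ to $[\dho t,\,\cdot\,]$. Since the anchor $\mathcal L\to\mathrm{Der}(\mathcal A)$ is a morphism of graded Lie algebras, the odd element $t$ satisfies $t(t(a))=\tfrac12[t,t](a)=(t\wedge t)(a)$, so $(\partial^{t})^{2}=[t\wedge t,\,\cdot\,]$. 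Altogether
\[
(\dho+\partial^{t})^{2}=[\,\dho t+t\wedge t,\ \cdot\,]\colon\mathcal A\longrightarrow\mathcal A .
\]
If $t$ is a twisting cochain the right-hand side is $[0,\,\cdot\,]=0$, so $\partial^{t}$ is a perturbation; this implication is immediate.

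For the converse I must deduce from $(\dho+\partial^{t})^{2}=0$ that $T:=\dho t+t\wedge t\in\mathcal L_{-2}$ is zero, i.e.\ that the action of $\mathcal L$ on $\mathcal A$ detects its elements; this is the one step that needs an argument. I would test $[T,\,\cdot\,]$ against the $A$-valued constants, that is, the composites $a\varepsilon\colon C\xrightarrow{\varepsilon}R\xrightarrow{a}A$ for $a\in A$, where $\varepsilon$ is the counit of $C$. Unwinding the cup bracket and using that $\varepsilon$ is concentrated in degree $0$ together with the counit identity $\sum c'\varepsilon(c'')=c$, one finds $[T,a\varepsilon]=\pm\,T(\,\cdot\,)(a)$; hence $(\dho+\partial^{t})^{2}=0$ forces $T(c)(a)=0$ for all $c\in C$ and all $a\in A$, so every $T(c)\in\mathrm{Der}(A|R)$ annihilates $A$ and is therefore $0$. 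Thus $T=0$, i.e.\ $\dho t+t\wedge t=0$. (Evaluating the same test at the coaugmentation element $1_{C}$ only records that $t\circ\eta$ is a Maurer-Cartan element of $\mathrm{Der}(A|R)$, consistent with but not by itself implying the normalization $t\circ\eta=0$; that normalization belongs to the standing conventions and is in any case automatic whenever $\mathrm{Der}(A|R)$ has no nonzero elements of degree $-1$.)

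I expect the converse detection step to be the only real obstacle: one has to notice that the $A$-valued constants alone already witness triviality of the action of $T$, and then keep the Eilenberg-Koszul signs straight through the cup-bracket evaluations of $[T,a\varepsilon]$ and of $(\dho+\partial^{t})^{2}$. The central identity $(\dho+\partial^{t})^{2}=[\dho t+t\wedge t,\,\cdot\,]$ is a graded Lie-Rinehart version of \eqref{exploit0}, with the Hom-differential now carrying the differential of $C$, and is otherwise routine.
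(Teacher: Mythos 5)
Your proof is correct and follows essentially the same route as the paper, whose entire argument is the single identity $(\dho\partial^{t}+\partial^{t}\dho+\partial^{t}\partial^{t})(\alpha)=[\dho t+t\wedge t,\alpha]$ for $\alpha\in\mathrm{Hom}(C,A)$ --- the same central identity you derive. Your additional detection step (testing $[\dho t+t\wedge t,\,\cdot\,]$ against the constants $a\varepsilon$ to conclude $\dho t+t\wedge t=0$) merely makes explicit what the paper leaves to the reader in the converse direction.
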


\begin{proof} This is a consequence of the identity
\begin{align*}
(\dho \partial^t +\partial^t \dho +  \partial^t\partial^t)(\alpha)
&= \left[\dho t +t\wedge t,\alpha\right],\  \alpha \in \mathcal A=\mathrm{Hom}(C,A).
\end{align*}
\end{proof}

\section{Multi derivation Maurer-Cartan algebras }
\label{maintechnical}

Let $(\AA,\dho_0)$ be a differential 
graded algebra, endowed with a filtration
\begin{equation}
\AA=\AA^0\supseteq \AA^1 \supseteq \ldots \supseteq \ldots
\label{descenda}
\end{equation}
that is compatible with the differential $\dho_0$.
We warn the reader that, to avoid an orgy of notation, 
here the superscripts refer to the filtration degree
and not to the ordinary degree (written in superscripts).
Henceforth, whenever we use superscripts of this kind,
it will be clear from the context whether
filtration degree or ordinary degree is intended.
Let $\dho=\sum_{j\geq 1}\dho_j$
be an algebra perturbation of $\dho_0$
such that, for $j \geq 1$, the derivation
$\dho_j$, when non-zero, 
lowers filtration by $j$ but not by $j+1$ ($j \geq 1$)
in the following sense:
for any $\ell \geq 0$, the derivation $\dho_j$,
restricted to
$\AA^{\ell}$, has the form
\begin{equation}
\dho_j\colon \AA^{\ell} \longrightarrow \AA^{\ell+j} 
\label{lformj}
\end{equation}
but does not factor through $\AA^{\ell+j+1}$ as a composite of the kind
$\AA^{\ell} \to \AA^{\ell+j+1} \subseteq \AA^{\ell+j}$.
Here we implicitly assume that $\sum_{j\geq 1}\dho_j$
converges, either naively in the sense that,
given a homogeneous member $\alpha$ of $\AA$,
only finitely many values $\dho_j(\alpha)$ are non-zero
or, more generally, in this sense:
the filtration is complete,
that is, the canonical map 
$\AA \to \lim (\AA/\AA^j)$
is an isomorphism, and $\sum_{j\geq 1}\dho_j$ converges.
We will then say that
\begin{equation}
\left(\AA, \dho_0, \dho_1, \ldots \right)
\end{equation}
is a {\em \multialgebra\/}.

\begin{rema}
\label{classical1}
Given a filtered algebra of the kind \eqref{descenda}, suppose that 
$\AA$ admits a bigrading $\{\AA^{p,q}\}_{p,q}$
with filtration degree $p\geq 0$ and 
complementary degree $q$; here the meaning of the term
\lq complementary degree\rq\ is that $p+q$ recovers the total degree.
Then a special kind of \multialgebra\  structure on $\AA$ is one of the kind
where
the operators $\dho_j$ ($j \geq 0$) 
take the familiar form
\[
\dho_j\colon \AA^{p,q} \longrightarrow \AA^{p+j,q-j+1}.
\]
We will refer to this kind of \multialgebra\  structure as a {\em bigraded
\multialgebra\  structure\/}.
\end{rema}

\begin{rema}
\label{classical2}
Given a filtered algebra of the kind \eqref{descenda},
let $\mathrm E_0(\AA)$
denote the associated bigraded algebra, with bigrading
\[
\mathrm E_0(\AA)^{p,q}=\mathrm E_0(\AA)_{-q}^p=(\AA^p\big /\AA^{p+1})_{-(p+q)},
\]
filtration degree $p\geq 0 $ and 
complementary degree $q$.
A \multialgebra\  structure $\dho_0, \dho_1, \ldots$ on $\AA$
induces a bigraded \multialgebra\  structure 
$\widehat \dho_0, \widehat \dho_1, \ldots$ on
$\mathrm E_0(\AA)$;
we will refer to this  bigraded \multialgebra\  structure
as the {\em  associated bigraded \multialgebra\  structure\/}.
A bigraded \multialgebra\  is isomorphic to its associated
bigraded \multialgebra\  via the obvious map from the former to the latter.
\end{rema}

Let $C$ be a cocomplete coaugmented differential graded cocommutative coalgebra
and $A$ a differential graded commutative algebra.
We write the differential of $C$ as $d^0$.
Furthermore, let 
$\cpartial$ be a \ppt\  
on $C$
and $t\colon C \to \mathrm{Der}(A|R)$
a \fdm.
We use the notation in \eqref{cpartial}
for the constituents
$\cpartial^j$
and that in \eqref{ltwistingc} for the constituents
$t_j$ of $t$ ($j \geq 1$).
Consider the differential graded algebra $\mathrm{Hom}(C,A)$,
endowed with the Hom-differential $\dho_0$,
as well as the graded Lie algebra
 $\mathrm{Hom}(C, \mathrm{Der}(A|R))$,
endowed with the Hom-differential $\dho_0$, where
the notation is slightly abused.
For $j \geq 1$,
the operator $\partial^{t_j}$, cf. \eqref{expres1}, is 
a derivation of $\mathrm{Hom}(C,A)$, and
the coderivation $\cpartial^j$ of $C$ induces a derivation
$\apartial_j$ of $\mathrm{Hom}(C,A)$;
explicitly, given $\varphi \in \mathrm{Hom}(C,A)$ homogeneous,
\begin{equation}
\apartial_j (\varphi)=
(-1)^{|\varphi|+1} \varphi \circ \cpartial^j.
\label{aexplicit}
\end{equation}
Thus the derivations
\begin{align}
\apartial &=\sum \apartial_j
\\
\partial^{t}&= \sum \partial^{t_j}
\\
\dho_{\bra}&= \dho_0 + \apartial
\\
\dho_j&= \partial^{t_j}+\apartial_j \ (j \geq 1)
\label{dhoaj}
\\
\dho&= \dho_{\bra} + \partial^{t} =\dho_0 + \apartial + \partial^{t}
= \dho_0 + \sum \dho_j
\label{dhoa}
\end{align}
of
$\mathrm{Hom}(C,A)$ are defined.
With a slight abuse of notation, we denote the corresponding operator
on the graded Lie algebra
$\mathrm{Hom}(C, \mathrm{Der}(A|R))$ by
\begin{equation}
\dho_{\bra}= \dho_0 + \apartial\colon
\mathrm{Hom}(C, \mathrm{Der}(A|R))
\longrightarrow
\mathrm{Hom}(C, \mathrm{Der}(A|R))
\end{equation}
as well.
For ease of exposition,
we recollect the following (obvious)
statements in the next proposition. 

\begin{prop}
\label{ease1}
\begin{enumerate}
\item The \ppt\  $\cpartial$ is a perturbation 
of the coalgebra differential $d^0$ on $C$
if and only if
\begin{equation}
[d^0, \cpartial]  + \cpartial \cpartial = 0.
\label{iff1}
\end{equation}
\item
Suppose that  $\cpartial$ is a (coalgebra) perturbation of the differential $d^0$ on $C$. Then
$t$ is a Lie algebra twisting cochain of the kind
$(C,d^0+ \cpartial) \to \mathrm{Der}(A|R)$
if and only if
\begin{equation}
\dho_{\bra} t +t\wedge t=0 \in \mathrm{Hom}(C, \mathrm{Der}(A|R)).
\label{iff2}
\end{equation}
\item
The derivation $\apartial$ is an algebra perturbation of the algebra
differential $\dho_0$ on\linebreak 
$\mathrm{Hom}(C,A)$ if and only if
\begin{equation}
[\dho_0, \apartial] + \apartial \apartial = 0.
\label{iff3}
\end{equation}
\item
The identity {\rm \eqref{iff1}}
implies the identity {\rm \eqref{iff3}}.
Thus when $d^0 +\cpartial$ is a (coalgebra) differential on $C$,
the derivation
$\dho_{\bra}=\dho_0+\apartial$
is an algebra differential on $\mathrm{Hom}(C,A)$.
\item
The system of derivations $\{ \dho_j\}_{j \geq 0}$ turns
 $\mathrm{Hom}(C,A)$ into a \multialgebra, that is, the
derivation
$\dho$, cf. {\rm \eqref{dhoa}},
is an algebra differential on $\mathrm{Hom}(C,A)$,
if and only if
\begin{equation}
[\dho_0, \apartial] +
[\dho_0, \partial^t]  +
[\apartial ,\partial^t]
+\apartial \apartial
 +\partial^t \partial^t=0.
\label{iff4}
\end{equation}
\item
The identities {\rm \eqref{iff1}} and {\eqref{iff2}}
together imply the identity {\rm \eqref{iff4}}.
Thus when $\cpartial$ is a (coalgebra) perturbation on $C$
and 
$t\colon (C,d^0 +\cpartial) \to \mathrm{Der}(A|R)$
a Lie algebra twisting cochain,
the derivation
$\dho$, cf. {\rm \eqref{dhoa}},
is an algebra differential on $\mathrm{Hom}(C,A)$.
\end{enumerate}
\end{prop}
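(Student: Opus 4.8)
The plan is to prove the six assertions of Proposition~\ref{ease1} in the order stated, since each is either a routine unravelling of a definition or an immediate consequence of the homological-algebra identities already established in Section~3. I would begin with (1): the operator $d^0+\cpartial$ is a differential precisely when $(d^0+\cpartial)(d^0+\cpartial)=0$; expanding and using $d^0 d^0=0$ gives exactly $[d^0,\cpartial]+\cpartial\cpartial=0$, which is \eqref{iff1}. Assertion~(3) is proved the same way, applied to the Hom-differential $\dho_0$ and the derivation $\apartial$ on $\mathrm{Hom}(C,A)$: since $\dho_0\dho_0=0$, the condition that $\dho_0+\apartial$ square to zero is $[\dho_0,\apartial]+\apartial\apartial=0$, which is \eqref{iff3}. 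Assertion~(2) follows from the twisting-cochain identity \eqref{twc}, read in the differential graded Lie algebra $\mathrm{Hom}(C,\mathrm{Der}(A|R))$ whose differential, once $\cpartial$ is a coalgebra perturbation, is $\dho_{\bra}=\dho_0+\apartial$ by (part of) assertion~(4); the condition $\dho_{\bra} t+t\wedge t=0$ is then just \eqref{twc} for this dg Lie algebra, hence equivalent to $t$ being a Lie algebra twisting cochain $(C,d^0+\cpartial)\to\mathrm{Der}(A|R)$.

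For assertion~(4), the key point is the standard fact that the Hom-functor is compatible with squaring of perturbations: applying $\mathrm{Hom}(-,A)$ to the coderivation $\cpartial$ on $C$ produces, via \eqref{aexplicit}, the derivation $\apartial$ on $\mathrm{Hom}(C,A)$, and the assignment $\cpartial\mapsto\apartial$ is (graded) multiplicative up to the relevant signs, so that $\cpartial\cpartial=0$ forces $\apartial\apartial=0$ and, more generally, $[d^0,\cpartial]+\cpartial\cpartial=0$ forces $[\dho_0,\apartial]+\apartial\apartial=0$. Concretely one checks on a homogeneous $\varphi\in\mathrm{Hom}(C,A)$ that $(\apartial\apartial\varphi)=\pm\varphi\circ(\cpartial\cpartial)$ and that $([\dho_0,\apartial]\varphi)=\pm\varphi\circ[d^0,\cpartial]$, with signs arranged by the Eilenberg--Koszul convention exactly as in \eqref{homd} and \eqref{aexplicit}; this is the step I expect to require the most care, since it is the one place where a sign slip would be fatal, but it is nonetheless routine. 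Granting this, \eqref{iff1}$\Rightarrow$\eqref{iff3} is immediate, and then $\dho_{\bra}=\dho_0+\apartial$ is an algebra differential on $\mathrm{Hom}(C,A)$ by (3).

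Assertion~(5) is again the elementary computation that $\dho\dho=0$ with $\dho=\dho_0+\apartial+\partial^t$; expanding, cancelling $\dho_0\dho_0=0$, and grouping the cross-terms yields exactly the left-hand side of \eqref{iff4}. Finally assertion~(6): assume \eqref{iff1} and \eqref{iff2}. From \eqref{iff1} and assertion~(4) we get $[\dho_0,\apartial]+\apartial\apartial=0$, so \eqref{iff4} collapses to $[\dho_0,\partial^t]+[\apartial,\partial^t]+\partial^t\partial^t=0$. Now I invoke the identity recorded just before Section~4 (the displayed identity in the proof of the last Proposition of Section~3),
\[
(\dho\partial^t+\partial^t\dho+\partial^t\partial^t)(\alpha)=[\dho t+t\wedge t,\alpha],\quad\alpha\in\mathrm{Hom}(C,A),
\]
read with $\dho=\dho_{\bra}=\dho_0+\apartial$ in place of the bare Hom-differential: the left side is $\big([\dho_0,\partial^t]+[\apartial,\partial^t]+\partial^t\partial^t\big)(\alpha)$ and the right side is $[\dho_{\bra}t+t\wedge t,\alpha]$, which vanishes for every $\alpha$ by \eqref{iff2}. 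Hence the residual terms of \eqref{iff4} vanish, \eqref{iff4} holds, and by assertion~(5) the derivation $\dho$ is an algebra differential on $\mathrm{Hom}(C,A)$, i.e.\ $\{\dho_j\}_{j\ge 0}$ is a \multialgebra\ structure. This completes the proof.
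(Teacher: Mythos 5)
Your proposal is correct and follows exactly the route the paper intends: the paper states Proposition~\ref{ease1} without proof, calling the assertions ``obvious,'' and your unwinding --- squaring the perturbed operators for (1), (3), (5); the sign check that $([\dho_0,\apartial]+\apartial\apartial)\varphi=\pm\varphi\circ([d^0,\cpartial]+\cpartial\cpartial)$ for (4); and the identity $(\dho_{\bra}\partial^t+\partial^t\dho_{\bra}+\partial^t\partial^t)(\alpha)=[\dho_{\bra}t+t\wedge t,\alpha]$ for (2) and (6) --- is precisely the standard argument the paper uses elsewhere (cf.\ \eqref{exploit0} and the proof of the unnamed proposition closing Section~3). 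The only point worth flagging is that reading that identity with $\dho_{\bra}$ in place of $\dho_0$ tacitly uses that $\apartial$, being induced by a coderivation of $C$, is also a derivation of the bracket pairing \eqref{bracketpairing}; this is routine but is the one step you should make explicit.
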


Let $V$ be a differential graded $A$-module.
The $A$-module structure being from the left, $V$ also
acquires an obvious differential graded right $A$-module 
structure---this involves the Eilenberg-Koszul convention---,
and the graded tensor powers $V^{\otimes_A n}$ ($n \geq 1$)
are defined. Accordingly,
let $\Sigm_A[V]$ be the graded symmetric $A$-algebra
on $V$, and endow it with the differential graded
$A$-module structure it acquires in an obvious manner.
The differential graded symmetric $R$-algebra $\Sigm[V]$
is defined in the standard way, and the
canonical map
$\Sigm[V]\to \Sigm_A[V]$
is a morphism of $R$-algebras.
As noted before, the diagonal map $V \to V \oplus V$
induces the standard shuffle diagonal on 
 $\Sigm[V]$ and, likewise,
a  shuffle diagonal on 
$\Sigm_A[V]$ in such a way that
$\Sigm[V]\to \Sigm_A[V]$ is compatible with the diagonals
but, beware,
$\Sigm[V]\to \Sigm_A[V]$
is not in a naive manner a morphism of coalgebras.

We will now apply the previous discussion to $C= \Sigm[V]$ and $A$, as well as,
suitably adjusted, to  $\Sigm_A[V]$ and $A$.
The graded commutative algebra
$\mathrm{Hom}(C,A)$
(endowed with the cup multiplication)
is that of graded symmetric $A$-valued $R$-multilinear forms
on $V$, the differentials on $V$ and $A$ induce the algebra 
differential written before as $\dho_0$, and we continue to use this notation.
Likewise,
$\mathrm{Hom}_A(\Sigm_A[V],A)$
is the graded $A$-module
of graded symmetric $A$-valued graded $A$-multilinear forms
on $V$.
A little thought reveals that,
since $\Sigm_A[V]$ is a differential graded $A$-module,
with a slight abuse of the notation $\dho_0$,
this operator passes to
$\mathrm{Hom}_A(\Sigm_A[V],A)$
and, with respect to the ordinary multiplication of
forms (induced by the shuffle diagonal on $\Sigm_A[V]$
and the multiplication of $A$), turns
$\mathrm{Hom}_A(\Sigm_A[V],A)$
into a differential graded commutative
$R$-algebra. We write this differential graded algebra as
$(\mathrm{Sym}_A(V,A),\dho_0)$.

Recall the coaugmentation filtration
$R=C_0 \subseteq C_1 \subseteq \ldots$ of 
the coaugmented differential graded cocommutative
$R$-coalgebra $C=\Sigm[V]$ and,
for
$j \geq 0$, let
\[
\mathrm{Hom}(\Sigm[V],A)^{j+1}
=\ker(\mathrm{Hom}(C,A) \longrightarrow \mathrm{Hom}(C_j,A)).
\]
Relative to the differential $\dho_0$,
\begin{equation}
\mathrm{Hom}(\Sigm[V],A)=
\mathrm{Hom}(\Sigm[V],A)^0
\supseteq 
\mathrm{Hom}(\Sigm[V],A)^1 \supseteq \ldots \supseteq \ldots
\label{descend2}
\end{equation}
is a descending filtration
of differential graded algebras and,
in an obvious manner, 
still relative to the differential $\dho_0$,
this filtration induces as well a filtration
\begin{equation}
\mathrm{Sym}_A(V,A)=\mathrm{Sym}_A(V,A)^0
\supseteq \mathrm{Sym}_A(V,A)^1 \supseteq \ldots \supseteq 
\mathrm{Sym}_A(V,A)^j \supseteq \ldots 
\label{augmentationfiltsym}
\end{equation}
of differential graded $R$-algebras.

As noted earlier, since the ground ring $R$ contains the rational numbers
as a subring, as a Hopf algebra, the differential graded 
symmetric algebra $\Sigm[V]$
is actually canonically isomorphic to the
cofree differential graded cocommutative
coalgebra $\Sigmc[V]$ on $V$;
in particular, the coaugmentation filtration
coincides with the tensor power filtration.

As before, let  
$\cpartial$ be a \ppt\  
on $C=\Sigm[V]$
and $t\colon C=\Sigm[V] \to \mathrm{Der}(A|R)$
a \fdm.
The following is immediate.

\begin{prop}
\label{immediate2}
Suppose that, for $j \geq 1$, each derivation $\dho_j$ of 
$\mathrm{Hom}(\Sigm[V],A)$ 
(though not necessarily the individual constituents
$\partial^{t_j}$ and
$\apartial_j$ of $\dho_j= \partial^{t_j} +\apartial_j$)
passes to a derivation of
$\mathrm{Sym}_A(V,A)=\mathrm{Hom}_A(\Sigm_A[V],A)$,
necessarily
lowering the filtration {\rm \eqref{augmentationfiltsym}}  
by $j$
(with a slight abuse of the notation
$\dho_j$). 
When $\dho$ is an algebra differential---equivalently,
when $\sum_{j \geq 1}\dho_j$, cf. {\rm \eqref{dhoa}}, 
is an algebra perturbation
of the algebra differential $\dho_0$ on  
$\mathrm{Sym}_A(V,A)$---, the data
\begin{equation}
(\mathrm{Sym}_A(V,A),\dho_0, \dho_1, \dho_2, \ldots)
\label{multisym}
\end{equation}
constitute a \multialgebra.
\end{prop}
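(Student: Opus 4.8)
The plan is to realise $\mathrm{Sym}_A(V,A)$ as a sub-differential-graded-algebra of $\mathrm{Hom}(\Sigm[V],A)$ and to transport the \multialgebra\ structure along this inclusion. First I would recall, from the construction preceding the statement, that precomposition with the canonical morphism $\Sigm[V]\to \Sigm_A[V]$ of $R$-algebras --- which is compatible with the shuffle diagonals, though not with the coalgebra structures in the naive sense --- embeds $\mathrm{Sym}_A(V,A)=\mathrm{Hom}_A(\Sigm_A[V],A)$ into $\mathrm{Hom}(\Sigm[V],A)$ as a graded subalgebra for the cup products, that this embedding is compatible with the two operators written $\dho_0$ on either side, and that the filtration \eqref{augmentationfiltsym} is precisely the one induced on the image from \eqref{descend2}. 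Hence $(\mathrm{Sym}_A(V,A),\dho_0)$, filtered by \eqref{augmentationfiltsym}, is already a filtered differential graded algebra of the kind occurring in the definition of a \multialgebra\ in Section \ref{maintechnical}.

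Next I would invoke the hypothesis. By assumption, for each $j\geq 1$ the derivation $\dho_j$ of $\mathrm{Hom}(\Sigm[V],A)$ maps the subspace $\mathrm{Sym}_A(V,A)$ into itself, and, being a derivation of the ambient algebra restricting to a subalgebra, its restriction is a derivation of $\mathrm{Sym}_A(V,A)$; moreover this restriction carries $\mathrm{Sym}_A(V,A)^{\ell}$ into $\mathrm{Sym}_A(V,A)^{\ell+j}$ as in \eqref{lformj}, since on $\mathrm{Hom}(\Sigm[V],A)$ both $\apartial_j$ (induced by $\cpartial^j$, which lowers the coaugmentation filtration of $\Sigm[V]$ by $j$) and $\partial^{t_j}$ (induced by $t_j$, which vanishes on $C_{j-1}$) raise the filtration degree of \eqref{descend2} by $j$, so that $\dho_j=\partial^{t_j}+\apartial_j$ does too --- the phenomenon, exactly as in Proposition \ref{cruc}, being that neither summand of $\dho_j$ need preserve $\mathrm{Sym}_A(V,A)$, only their sum. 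Consequently $\dho=\dho_0+\sum_{j\geq 1}\dho_j$ preserves $\mathrm{Sym}_A(V,A)$ and restricts there to a degree $-1$ derivation, and $\sum_{j\geq 1}\dho_j$ converges on $\mathrm{Sym}_A(V,A)$ because it does so on $\mathrm{Hom}(\Sigm[V],A)$: any element of $\Sigm_A[V]$ involves only finitely many factors from $V$, so only finitely many $\dho_j$ contribute to a given value, and in the complete case one appeals instead to completeness of \eqref{augmentationfiltsym}.

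To conclude I would use the remaining part of the hypothesis, that $\dho$ is an algebra differential: since $\dho\dho=0$ on $\mathrm{Hom}(\Sigm[V],A)$, a fortiori $\dho\dho=0$ on the subspace $\mathrm{Sym}_A(V,A)$, i.e.\ $\sum_{j\geq 1}\dho_j$ is an algebra perturbation of $\dho_0$ on $\mathrm{Sym}_A(V,A)$. Assembling the pieces --- the filtered differential graded algebra $(\mathrm{Sym}_A(V,A),\dho_0)$, the family $\{\dho_j\}_{j\geq 1}$ of degree $-1$ derivations each lowering \eqref{augmentationfiltsym} by $j$, convergence of $\sum_{j\geq 1}\dho_j$, and $\dho_0+\sum_{j\geq 1}\dho_j$ a differential --- one has exactly the data required in Section \ref{maintechnical}, so \eqref{multisym} is a \multialgebra. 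I do not expect a genuine obstacle here: all the substance has already been placed in the construction of $\mathrm{Hom}(\Sigm[V],A)$, its filtration, and Proposition \ref{ease1}, and the only point deserving a sentence of care is the convergence of $\sum_{j\geq 1}\dho_j$ after restriction; of the displayed equivalence, only the trivial ``restrict to a subalgebra'' direction is actually used.
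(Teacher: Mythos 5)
Your argument is correct and is precisely the routine unwinding of definitions that the paper has in mind: the paper offers no proof at all, introducing the statement with ``The following is immediate,'' and your verification (restriction of each $\dho_j$ to the subalgebra, the filtration-lowering property as in \eqref{lformj}, naive convergence, and $\dho\dho=0$) supplies exactly the details being elided. The one point to watch is that the clause ``when $\dho$ is an algebra differential'' is meant to refer to the restricted operator on $\mathrm{Sym}_A(V,A)$ itself (compare the remark following Theorem \ref{char2}, where no \multialgebra\ structure on the ambient algebra $\mathrm{Hom}(\Sigm[V],A)$ is assumed), so your step ``$\dho\dho=0$ on $\mathrm{Hom}(\Sigm[V],A)$, a fortiori on the subspace'' invokes a slightly stronger ambient hypothesis than the intended one---harmless here, since under the intended reading that part of the conclusion is literally contained in the hypothesis, but worth noting because the converse direction (from the subalgebra back to the ambient algebra) is the genuinely nontrivial content of Theorem \ref{char2} and Corollary \ref{curious2}.
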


\begin{cor}
\label{cruc2}
Suppose that $\cpartial$ is a (coalgebra) perturbation on $C=\Sigm[V]$
and $t$ a filtered Lie algebra twisting cochain 
$(C,d^0 +\cpartial) \to \mathrm{Der}(A|R)$.
Suppose that, furthermore, for $j \geq 1$, each derivation $\dho_j$ of 
$\mathrm{Hom}(\Sigm[V],A)$ 
(though not necessarily the individual constituents
$\partial^{t_j}$ and
$\apartial_j$ of $\dho_j= \partial^{t_j} +\apartial_j$)
passes to a derivation of
$\mathrm{Sym}_A(V,A)=\mathrm{Hom}_A(\Sigm_A[V],A)$.
Then {\rm \eqref{multisym}} is a \multialgebra.
\end{cor}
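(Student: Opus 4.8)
The plan is to obtain the statement as a direct combination of Proposition~\ref{ease1}~(6) and Proposition~\ref{immediate2}. Proposition~\ref{ease1}~(6) converts the hypotheses on $\cpartial$ and $t$ into the single hypothesis needed for Proposition~\ref{immediate2}, and the latter then delivers the conclusion.

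First I would invoke Proposition~\ref{ease1}~(6): the assumption that $\cpartial$ is a coalgebra perturbation of $d^0$ on $C=\Sigm[V]$ is the identity \eqref{iff1}, and the assumption that $t\colon(C,d^0+\cpartial)\to\mathrm{Der}(A|R)$ is a Lie algebra twisting cochain is the identity \eqref{iff2}; together they imply the identity \eqref{iff4}, that is, $\dho=\dho_0+\sum_{j\ge 1}\dho_j$ is an algebra differential on the ambient differential graded algebra $\mathrm{Hom}(\Sigm[V],A)$. Hence $\bigl(\mathrm{Hom}(\Sigm[V],A),\dho_0,\dho_1,\ldots\bigr)$ is already a \multialgebra\ relative to the filtration \eqref{descend2}, convergence of $\sum_{j\ge1}\dho_j$ there holding in the completed sense, since \eqref{descend2} is complete and $\dho_j$ lowers it by $j$.

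Next I would bring in the standing hypothesis that, for each $j\ge 1$, the derivation $\dho_j$ maps the subalgebra $\mathrm{Sym}_A(V,A)=\mathrm{Hom}_A(\Sigm_A[V],A)$ of $\mathrm{Hom}(\Sigm[V],A)$ into itself; since $\dho_j$ lowers the filtration \eqref{descend2} by $j$ in the sense of \eqref{lformj}, its restriction lowers the induced filtration \eqref{augmentationfiltsym} by $j$. The operator $\dho_0$ passes to $\mathrm{Sym}_A(V,A)$ by the very construction of $(\mathrm{Sym}_A(V,A),\dho_0)$, so $\dho=\dho_0+\sum_{j\ge1}\dho_j$ restricts to an $R$-linear derivation of $\mathrm{Sym}_A(V,A)$; as $\mathrm{Sym}_A(V,A)$ is a subalgebra on which $\dho$ is defined and $\dho\dho=0$ on the ambient algebra by the first step, it follows that $\dho\dho=0$ on $\mathrm{Sym}_A(V,A)$ as well, i.e.\ $\sum_{j\ge1}\dho_j$ is an algebra perturbation of $\dho_0$ on $\mathrm{Sym}_A(V,A)$. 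This is precisely the hypothesis of Proposition~\ref{immediate2}, whose conclusion is that \eqref{multisym} is a \multialgebra; convergence of $\sum_{j\ge1}\dho_j$ on $\mathrm{Sym}_A(V,A)$ holds in the completed sense, the filtration \eqref{augmentationfiltsym} being complete.

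Since every step is either a citation of a proposition established earlier in this section or a routine restriction argument, I do not anticipate a real obstacle. The one point requiring a little care is the passage from $\dho\dho=0$ on the ambient algebra $\mathrm{Hom}(\Sigm[V],A)$ to $\dho\dho=0$ on the subalgebra $\mathrm{Sym}_A(V,A)$; this is legitimate precisely because the hypothesis on the $\dho_j$, together with the analogous and already known property of $\dho_0$, guarantees that $\dho$ genuinely maps $\mathrm{Sym}_A(V,A)$ into itself, so that the restricted operator is the honest restriction of a square-zero operator.
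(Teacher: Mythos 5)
Your proposal is correct and is exactly the argument the paper intends: the corollary is left without an explicit proof precisely because it is the combination of Proposition \ref{ease1}(6) (giving $\dho\dho=0$ on the ambient algebra $\mathrm{Hom}(\Sigm[V],A)$) with the restriction hypothesis and Proposition \ref{immediate2}. Your added care about why $\dho\dho=0$ descends to the subalgebra $\mathrm{Sym}_A(V,A)$ is the right point to check and is handled correctly.
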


\begin{rema}
\label{classical}
The graded commutative algebras 
$\mathrm{Hom}(\Sigm[V],A)$ and $\mathrm{Hom}_A(\Sigm_A[V],A)$
are bigraded in an obvious manner:
Write the grading of $A$ in superscripts, so that
$A_{-q}=A^{q}$ and so that the differential
of $A$ takes the form
$d_0\colon A^q \to A^{q+1}$.
A homogeneous member of
$\mathrm{Hom}(\Sigm[V],A)$
of the kind
\[
f\colon V_{j_1} \times \ldots \times V_{j_p}\to A^{p+q-j_1-\ldots-j_p}
\]
has filtration degree $p$ and complementary degree $q$,
cf. Remark \ref{classical1}, and the resulting bigrading
of $\mathrm{Hom}(\Sigm[V],A)$ passes to $\mathrm{Hom}_A(\Sigm_A[V],A)$.
\end{rema}

We will now
explore the question to what extent the converse of 
the statement of Corollary \ref{cruc2} holds.
Here is a generalization of 
the sufficiency claim of
Corollary \ref{char1} above.

\begin{thm}
\label{char2}
Given the \ppt\ 
$\cpartial$ 
and the \fdm\ $t$,
suppose that, for $j \geq 1$, each derivation $\dho_j$ of 
$\mathrm{Hom}(\Sigm[V],A)$ 
(though not necessarily the individual constituents
$\partial^{t_j}$ and
$\apartial_j$ of $\dho_j= \partial^{t_j} +\apartial_j$)
passes to a derivation of
$\mathrm{Sym}_A(V,A)=\mathrm{Hom}_A(\Sigm_A[V],A)$ 
and that
{\rm \eqref{multisym}} is a \multialgebra.
Suppose, furthermore, that the canonical morphism
\begin{equation}
V
\longrightarrow
\mathrm{Hom}_A(V,\mathrm{Hom}_A(V,A),A)
\label{cano}
\end{equation}
of graded $A$-modules
(from $V$ into its double $A$-dual)
is injective.
Then $\cpartial$ is a coalgebra perturbation of 
the coalgebra differential $d^0$ of
$C=\Sigm[V]$,
and $t$ is a Lie algebra twisting cochain 
$(C,d^0 +\cpartial) \to \mathrm{Der}(A|R)$.
\end{thm}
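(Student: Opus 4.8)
The plan is to follow the proof of Corollary \ref{char1}, the new feature being that the obstruction to $\dho\dho=0$ splits into a ``coalgebra part'' and a ``Lie algebra twisting cochain part'', which I would separate by testing on $A$-multilinear forms of filtration degree $0$ and of filtration degree $1$, respectively.

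First I would record, on $\mathrm{Hom}(\Sigm[V],A)$, the master identity generalizing \eqref{exploit0}. Setting $\widetilde c=[d^0,\cpartial]+\cpartial\cpartial\colon C\to C$ and $\widetilde\tau=\dho_{\bra}t+t\wedge t\colon C\to\mathrm{Der}(A|R)$, the bracket bookkeeping underlying Proposition \ref{ease1} yields
\[
\dho\dho(\varphi)=-\,\varphi\circ\widetilde c+[\widetilde\tau,\varphi],\qquad\varphi\in\mathrm{Hom}(\Sigm[V],A),
\]
the bracket on the right being the cup bracket coming from the action of $\mathrm{Der}(A|R)$ on $A$. Here $\widetilde c$ is a degree $-2$ \emph{coderivation} of $C$, being the sum of the super-commutator $[d^0,\cpartial]$ and $\cpartial\cpartial=\tfrac12[\cpartial,\cpartial]$ (recall $R\supseteq\mathbb Q$), and moreover $\varepsilon\circ\widetilde c=0$ because $\varepsilon\circ d^0=0$ and $\varepsilon\circ\cpartial=0$ (as $\cpartial$ lowers the coaugmentation filtration). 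By Propositions \ref{laterref1} and \ref{laterref2}, the assertion of the theorem is precisely that $\widetilde c=0$ and $\widetilde\tau=0$, the latter then yielding the twisting cochain property via Proposition \ref{ease1}(2) once $d^0+\cpartial$ is known to be a differential. The hypothesis enters only through the consequence that $\dho\dho\varphi=0$ for every $\varphi$ in $\mathrm{Sym}_A(V,A)=\mathrm{Hom}_A(\Sigm_A[V],A)\subseteq\mathrm{Hom}(\Sigm[V],A)$.

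Next I would test on $0$-forms: for $a\in A=\mathrm{Hom}_A(\Sigm_A^0[V],A)$, realized in $\mathrm{Hom}(\Sigm[V],A)$ as $c\mapsto\varepsilon(c)a$, one has $a\circ\widetilde c=0$ since $\varepsilon\circ\widetilde c=0$, so the master identity collapses to $0=\dho\dho a=[\widetilde\tau,a]$; evaluating at $x\in C$ and using the counit axiom gives $[\widetilde\tau,a](x)=\pm(\widetilde\tau(x))(a)$, so $[\widetilde\tau,a]=0$ for every $a$ forces $\widetilde\tau(x)=0$ for every $x$ (derivations of $A$ act faithfully on $A$), i.e. $\widetilde\tau=0$ --- the analogue of the step $0=dda=[t\cpartial+t\wedge t,a]$ in the proof of Corollary \ref{char1}. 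Then I would test on $1$-forms: for $\varphi=\overline\varphi\circ\pi$ with $\overline\varphi\in\mathrm{Hom}_A(V,A)$ and $\pi\colon C\to V$ the canonical projection onto the cogenerators, the vanishing $\widetilde\tau=0$ just obtained reduces the master identity to $0=\dho\dho\varphi=-\overline\varphi\circ(\pi\circ\widetilde c)$, so $\overline\varphi\circ(\pi\circ\widetilde c)=0$ for every $\overline\varphi\in\mathrm{Hom}_A(V,A)$; the injectivity hypothesis \eqref{cano} then forces $\pi\circ\widetilde c=0$, and since a coderivation of the cofree cocommutative coalgebra $C=\Sigm[V]\cong\Sigmc[V]$ is determined by its corestriction to $V$, it follows that $\widetilde c=0$. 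Thus $d^0+\cpartial$ is a differential, i.e. $\cpartial$ is a filtered coalgebra perturbation of $d^0$, and then $\widetilde\tau=\dho_{\bra}t+t\wedge t=0$ together with Proposition \ref{ease1}(2) shows that $t$ is a filtered Lie algebra twisting cochain $(C,d^0+\cpartial)\to\mathrm{Der}(A|R)$, as claimed.

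The master identity is itself routine --- it is the Eilenberg--Koszul sign bookkeeping promised right after \eqref{exploit0}. I expect the genuine points to be three: that $\widetilde c$ must be recognized as an honest coderivation of the \emph{cofree} coalgebra $\Sigmc[V]$, so that it is reconstructed from --- and hence annihilated by the vanishing of --- its corestriction $\pi\circ\widetilde c$ (this is also what underlies $\varepsilon\circ\widetilde c=0$ in the $0$-form step); that the single hypothesis \eqref{cano} suffices because one only ever needs to separate \emph{elements of $V$}, namely the values of $\pi\circ\widetilde c$, even though $\widetilde c$ itself is merely $R$-linear; and that the two tests must be performed in this order, since the term $[\widetilde\tau,\varphi]$ does not drop out on $1$-forms until $\widetilde\tau=0$ is available.
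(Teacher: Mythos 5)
Your proposal is correct and follows essentially the same route as the paper's proof: the same decomposition of the obstruction to $\dho\dho=0$ into a twisting-cochain part and a coalgebra part, the same order of testing (first on $A$, using that derivations act faithfully, to kill $\dho_{\bra}t+t\wedge t$; then on $\mathrm{Hom}_A(V,A)$, using the injectivity of \eqref{cano}, to kill the coderivation obstruction). The only difference is cosmetic --- you work with the total operators $\widetilde c$ and $\widetilde\tau$ and invoke cofreeness to pass from $\pi\circ\widetilde c=0$ to $\widetilde c=0$, whereas the paper carries out the identical argument filtration degree by filtration degree via the identities \eqref{separate11} and \eqref{separate22}.
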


\begin{rema}
In Theorem \ref{char2},
we do not assume that
the \multialgebra\ 
\eqref{multisym}
comes from a \multialgebra\  structure on the ambient algebra
$\mathrm{Sym}(V,A)
=\mathrm{Hom}(\Sigm[V],A)$.
\end{rema}

However, Theorem \ref{char2} has the following curious consequence,
cf. Remark \ref{curious} above:

\begin{cor}
\label{curious2}
Under the circumstances of Theorem {\rm \ref{char2}}, the data
\begin{equation}
(\mathrm{Sym}(V,A),\dho_0, \dho_1, \dho_2, \ldots)
=
(\mathrm{Hom}(\Sigm[V],A),\dho_0, \dho_1, \dho_2, \ldots)
\label{necessarily1}
\end{equation}
necessarily constitute a \multialgebra\  as well.
\end{cor}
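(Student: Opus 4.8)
The plan is to deduce the statement of Corollary \ref{curious2} directly from the conclusion of Theorem \ref{char2} together with part (6) of Proposition \ref{ease1}. First I would record what Theorem \ref{char2} actually delivers: under the stated hypotheses the \ppt\ $\cpartial$ is a genuine coalgebra perturbation of $d^0$ on $C=\Sigm[V]$, and $t$ is a filtered Lie algebra twisting cochain $(C,d^0+\cpartial)\to\mathrm{Der}(A|R)$. In the language of Proposition \ref{ease1}, this says precisely that the identities \eqref{iff1} and \eqref{iff2} hold. These identities live on $C$ and on $\mathrm{Hom}(C,\mathrm{Der}(A|R))$ respectively; nothing about them refers to the sub-$A$-module $\mathrm{Sym}_A(V,A)$, so they are statements about the ambient objects.

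Next I would invoke part (6) of Proposition \ref{ease1}: the conjunction of \eqref{iff1} and \eqref{iff2} implies \eqref{iff4}, and \eqref{iff4} is exactly the assertion that the derivation $\dho=\dho_0+\sum_{j\geq1}\dho_j$ of \eqref{dhoa} is an algebra differential on $\mathrm{Hom}(C,A)=\mathrm{Sym}(V,A)$. Equivalently, $\sum_{j\geq1}\dho_j$ is an algebra perturbation of $\dho_0$ on $\mathrm{Hom}(\Sigm[V],A)$. Combined with the filtration-lowering behaviour of the $\dho_j$ on \eqref{descend2}---which is automatic from the constructions in Section \ref{maintechnical}, each $\cpartial^j$ lowering coaugmentation filtration by $j$ and each $\partial^{t_j}$ likewise---and with the (assumed, from the hypotheses of Theorem \ref{char2}) naive or complete-filtration convergence of $\sum_{j\geq1}\dho_j$, this is exactly the definition of a \multialgebra. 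Hence $(\mathrm{Hom}(\Sigm[V],A),\dho_0,\dho_1,\dho_2,\ldots)$ is a \multialgebra, which is the claim.

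There is essentially no obstacle here; the only point that needs a word of care is the convergence/completeness bookkeeping, since \eqref{necessarily1} concerns the ambient algebra $\mathrm{Hom}(\Sigm[V],A)$ rather than the sub-$A$-algebra $\mathrm{Sym}_A(V,A)$ on which the hypothesis \eqref{multisym} was phrased. But because $C=\Sigm[V]$ is cocomplete, each constituent $\cpartial^j$ vanishes on $C_j$ and each $t_j$ vanishes on $C_{j-1}$, so $\dho_j$ raises the filtration \eqref{descend2} strictly and the sum $\sum_{j\geq1}\dho_j$ converges naively on $\mathrm{Hom}(\Sigm[V],A)$ in the sense of Section \ref{maintechnical}---applied to a fixed element of $C$, only finitely many $\dho_j$ contribute. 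With convergence in hand and \eqref{iff4} established, the defining conditions of a \multialgebra\ are met, and the corollary follows. I would phrase the proof in a few lines to this effect.

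\begin{proof}[Proof of Corollary {\rm \ref{curious2}}]
By Theorem \ref{char2}, the \ppt\ $\cpartial$ is a coalgebra perturbation of
the coalgebra differential $d^0$ of $C=\Sigm[V]$, so that \eqref{iff1} holds,
and $t$ is a Lie algebra twisting cochain $(C,d^0+\cpartial)\to\mathrm{Der}(A|R)$,
so that \eqref{iff2} holds. By part (6) of Proposition \ref{ease1}, the
identities \eqref{iff1} and \eqref{iff2} together imply \eqref{iff4}, that is,
the derivation $\dho=\dho_0+\sum_{j\geq1}\dho_j$, cf.\ \eqref{dhoa}, is an
algebra differential on $\mathrm{Hom}(\Sigm[V],A)=\mathrm{Sym}(V,A)$. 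Moreover,
for $j\geq1$ the derivation $\dho_j=\partial^{t_j}+\apartial_j$ lowers the
filtration \eqref{descend2} by $j$, since $\cpartial^j$ lowers the coaugmentation
filtration of $C$ by $j$ and likewise for $t_j$; and because $C$ is cocomplete,
applied to a fixed element of $C$ only finitely many of the $\dho_j$ are
non-zero, so $\sum_{j\geq1}\dho_j$ converges naively. Hence the data
$(\mathrm{Hom}(\Sigm[V],A),\dho_0,\dho_1,\dho_2,\ldots)$ satisfy the defining
conditions of a \multialgebra.
\end{proof}
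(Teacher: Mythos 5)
Your proof is correct and takes exactly the route the paper intends: the corollary is stated without an explicit proof precisely because it follows immediately from the conclusion of Theorem \ref{char2} combined with Proposition \ref{ease1}(6), which is the argument you give, together with the routine filtration and convergence checks you supply.
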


\begin{proof}[Proof of Theorem {\rm \ref{char2}}]
We must confirm the identities \eqref{separate11}
and \eqref{separate22}.

We note first that, on $\mathrm{Hom}(\Sigm[V],A)$,
\begin{equation}
\begin{aligned}
\sum_{k=0}^j \dho_k \dho_{j-k}
&=
\left([\dho_0,\partial^{t_{j}}]  
+ \sum_{k=1}^{j-1}[\apartial_k, \partial^{t_{j-k}}] 
+ \sum_{k=1}^{j-1}\partial^{t_k}\partial^{t_{j-k}}\right)
\\
&\quad
+[\dho_0,\apartial_j]+
\sum_{k=1}^{j-1} \apartial_k\apartial_{j-k}.
\end{aligned}
\label{trick1}
\end{equation}

Let $\ell \geq0$, $j \geq 1$, and let 
$a \in A$.
Exploiting the bracket pairing \eqref{bracketpairing}, we find
\begin{equation}
\begin{aligned}
{}&\left([\dho_0,\partial^{t_{j}}]  
+ \sum_{k=1}^{j-1}[\apartial_k, \partial^{t_{j-k}}] 
+ \sum_{k=1}^{j-1}\partial^{t_k}\partial^{t_{j-k}}\right)
a
\\
&=
\left[d_0 t^{j} + t^{j}d^0+
\sum_{k=1}^{j-1}
t^k\cpartial^{j-k}+ 
\sum_{k=1}^{j-1}
t^k\wedge t^{j-k},a\right].
\end{aligned}
\label{exploit1}
\end{equation}
Since $a$ and $j$ are arbitrary, 
and since the
second constituent
\[
[\dho_0,\apartial_j]+
\sum_{k=1}^{j-1} \apartial_k\apartial_{j-k}
\]
on the right-hand side of \eqref{trick1},
evaluated at $a\in A$ is zero,
we conclude that
the identities \eqref{separate22} hold.

To establish the identities \eqref{separate11}, 
let again $j \geq 1$.
We note first that, $j$ having been fixed,
the identity \eqref{separate22}, in turn, implies that
the identity
\begin{equation}
[\dho_0,\partial^{t_{j}}]  
+ \sum_{k=1}^{j-1}[\apartial_k, \partial^{t_{j-k}}] 
+ \sum_{k=1}^{j-1}\partial^{t_k}\partial^{t_{j-k}}=0
\label{allofh}
\end{equation}
holds on $\mathrm{Hom}(\Sigm[V],A)$
(not just on $\mathrm{Hom}_A(\Sigm[V],A)$, in fact
it would not even make sense  on $\mathrm{Hom}_A(\Sigm[V],A)$).

Let $x \in \Sigm^{j+1}[V]$ homogeneous
(beware, $x$ is not taken to be a member of  $\Sigm_A^{j+1}[V]$)
and $\varphi \in \mathrm{Hom}_A(V,\Sigm^{j+1}_A[V])$ homogeneous. 
For the moment, view $\varphi$ as a member of  $\mathrm{Hom}(V,\Sigm^{j+1}_A[V])$. By construction
\begin{align*}
\left([\dho_0,\apartial_j]+
\sum_{k=1}^{j-1} \apartial_k\apartial_{j-k}\right)\varphi(x)
&=
\pm\varphi\left(
\left(
d^0\cpartial^{j} + \cpartial^{j} d^0 +
\sum_{k=1}^{j-1} \cpartial^k\cpartial^{j-k}
\right)(x)\right)
\end{align*}
However, in view of \eqref{trick1} and \eqref{allofh},
\begin{equation}
\left([\dho_0,\apartial_j]+
\sum_{k=1}^{j-1} \apartial_k\apartial_{j-k}\right)\varphi
=
\left(\sum_{k=0}^j \dho_k \dho_{j-k}\right)\varphi.
\end{equation}
Now
\begin{equation*}
\left(
d^0\cpartial^{j} + \cpartial^{j} d^0 +
\sum_{k=1}^{j-1} \cpartial^k\cpartial^{j-k}
\right)(x)\in V.
\end{equation*}
By assumption,
$\sum_{k=0}^j \dho_k \dho_{j-k}$ is zero on
$\mathrm{Sym}_A(V,A)$ and
$\varphi$ was taken in $\mathrm{Sym}_A(V,A)$ whence
\begin{equation}
\varphi\left(\left(
d^0\cpartial^{j} + \cpartial^{j} d^0 +
\sum_{k=1}^{j-1} \cpartial^k\cpartial^{j-k}
\right)(x)\right)=0.
\end{equation}
Since the canonical morphism \eqref{cano} of graded $A$-modules
is supposed to be injective and since $\varphi$ is arbitrary,
we conclude
\begin{equation}
\left(
d^0\cpartial^{j} + \cpartial^{j} d^0 +
\sum_{k=1}^{j-1} \cpartial^k\cpartial^{j-k}
\right)(x)=0.
\end{equation}
Since $x\in \Sigm^{j+1}[V]$ is arbitrary, we find
\begin{equation}
d^0\cpartial^{j} + \cpartial^{j} d^0 +
\sum_{k=1}^{j-1} \cpartial^k\cpartial^{j-k}=0,
\end{equation}
that is, the identity \eqref{separate11} holds as well.
\end{proof}

\begin{rema}
\label{instructive}
It is instructive to illustrate the reasoning in the above proof
in low degrees:

For $j=1$, the identity \eqref{trick1} reads
\[
\dho_0 \dho_1+ \dho_1 \dho_0 =[\dho_0, \partial^{t_1}]+[\dho_0, \apartial_1],
\]
the hypothesis $0= 
\dho_0 \dho_1+ \dho_1 \dho_0$  
comes down to
\begin{align}
0&=[\dho_0, \partial^{t_1}]+[\dho_0, \apartial_1] 
\colon 
\mathrm{Hom}_A(\Sigm_A^{\ell}[V],A) \to
\mathrm{Hom}_A(\Sigm_A^{\ell+1}[V],A),
\end{align}
and the identity \eqref{exploit1} amounts to
\[
[\dho_0, \partial^{t_1}]a
=\left[d_0t_1 +t_1d^0,a\right],\ a \in A.
\]
Since $a \in A$ is arbitrary,
we conclude that 
$t_1$ satisfies the identity \eqref{benefit11},
viz. $d_0t_1 +t_1d^0=0$, whence
$t_1\colon V \to A$
is a morphism of chain complexes.
Consequently the derivation
$
[\dho_0, \partial^{t_1}]
$
is zero on all of $\mathrm{Hom}(\Sigm[V],A)$.
Since the sum $[\dho_0, \partial^{t_1}]+[\dho_0, \apartial_1]$
is zero, 
in view of
\[
\left([\dho_0, \partial^{t_1}]+[\dho_0, \apartial_1]\right)\varphi(x)
=
\left[d_0t_1 +t_1d^0,\varphi\right](x)
\pm \varphi((\cpartial^1 d^0 + d^0\cpartial^1)(x)),
\]
for any $\varphi \in \mathrm{Hom}_A(V,A)$ and any $x \in \Sigm^2[V]$,
we deduce  the identity \eqref{benefit1}, viz.
\[
d^0\cpartial^1+\cpartial^1 d^0=0.
\]
Consequently the derivation
$
[\dho_0, \apartial_1]
$
is zero on all of $\mathrm{Hom}(\Sigm[V],A)$.

For $j=2$, the hypothesis 
$\dho_0 \dho_2+ \dho_2 \dho_0+\dho_1\dho_1 =0$ implies
\begin{align}
[\dho_0, \partial^{t_2}]+ 
[\dho_0, \apartial_2]+ 
[\partial^{t_1}, \apartial_1]+
\partial^{t_1}\partial^{t_1}+ \apartial_1 \apartial_1 .
&=0
\end{align}
Given $a \in A$, exploiting the identity \eqref{trick1}, we find that
\begin{align*}
\left([\dho_0, \partial^{t_2}]+  
\apartial_1\partial^{t_1}+
\partial^{t_1}\partial^{t_1}\right)(a)
&= \left[d_0t_2 +t_2d^0+t_1\cpartial^1+t_1\wedge t_1,a\right]
\end{align*}
is zero. Since $a\in A$ is arbitrary, we conclude 
that
$d_0t_2 +t_2d^0+t_1\cpartial^1+t_1\wedge t_1$ is zero,
that is, $t_1$ and $t_2$ satisfy the identity \eqref{benefit12}.
Consequently
$[\dho_0, \partial^{t_2}]+  
\apartial_1\partial^{t_1}+
\partial^{t_1}\partial^{t_1}=0$, whence
$
[\dho_0, \apartial_2]+  \apartial_1 \apartial_1=0$,
and thence \eqref{benefit2} holds, viz.
\[
d^0\cpartial^2 +\cpartial^2d^0  +\cpartial^1 \cpartial^1=0.
\]
\end{rema}

The next result extends Theorem \ref{olr} above.

\begin{thm}
\label{olr2}
Suppose that the canonical morphism
{\rm\eqref{cano}}
of graded $A$-modules
is an isomorphism.
Let $\{\dho_j\}_{j \geq 1}$
be a family of derivations of
$\mathrm{Sym}_A(V,A)$ such that
$\dho_j$ lowers the filtration
{\rm \eqref{augmentationfiltsym}}
by $j$. Suppose, furthermore, that
\begin{equation}
(\mathrm{Sym}_A(V,A),\dho_0, \dho_1, \dho_2, \ldots)
\label{multisym2}
\end{equation}
is a \multialgebra.
Then  the family $\{\dho_j\}_{j \geq 1}$ arises from a
(necessarily unique) filtered  (coalgebra) perturbation 
$\cpartial$ of the coalgebra differential $d^0$ 
on $C=\Sigm[V]$
and a filtered Lie algebra twisting cochain 
$t\colon (C,d^0 +\cpartial) \to \mathrm{Der}(A|R)$.
\end{thm}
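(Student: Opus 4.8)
The plan is to mimic the proof of Theorem~\ref{olr} above, with Corollary~\ref{char1} replaced by the stronger Theorem~\ref{char2}. The coalgebra differential $d^0$ of $C=\Sigm[V]$ is the one induced by the differential of the differential graded $A$-module $V$, and the base differential $\dho_0$ of $\mathrm{Sym}_A(V,A)$ is the Hom-differential attached to $d^0$ and to the differential of $A$; both are already in place, so that what has to be produced from the family $\{\dho_j\}_{j\geq1}$ is a \fdm\ $t\colon\Sigm[V]\to\mathrm{Der}(A|R)$ and a \ppt\ $\cpartial$ on $C$ such that, for every $j\geq1$, the operator $\partial^{t_j}+\apartial_j$ of \eqref{dhoaj} recovers the given $\dho_j$. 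Granted this, the hypotheses of Theorem~\ref{char2} are met---the canonical morphism \eqref{cano} is injective since it is an isomorphism, each $\dho_j$ passes to $\mathrm{Sym}_A(V,A)$, and \eqref{multisym2} is a \multialgebra---and Theorem~\ref{char2} then yields that $\cpartial$ is a coalgebra perturbation of $d^0$ and that $t\colon(C,d^0+\cpartial)\to\mathrm{Der}(A|R)$ is a Lie algebra twisting cochain, which is the assertion.

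To obtain $t$, fix $j\geq1$ and restrict $\dho_j$ to the $R$-subalgebra $A$ of $\mathrm{Sym}_A(V,A)$ consisting of the forms of arity zero; since $\dho_j$ lowers the filtration \eqref{augmentationfiltsym} by $j$, this restriction is an $R$-linear map $\dho_j\colon A\to\mathrm{Hom}_A(\Sigm_A^j[V],A)\subseteq\mathrm{Hom}(\Sigm^j[V],A)$, whose adjoint $t_j\colon\Sigm^j[V]\to\mathrm{Hom}(A,A)$ is a homogeneous $R$-linear map of degree $-1$. Because $\dho_j$ is a derivation and $A$ sits inside $\mathrm{Sym}_A(V,A)$ as an $R$-subalgebra, evaluating the Leibniz rule for $\dho_j$ on a product of two elements of $A$ shows that $t_j$ in fact takes its values in $\mathrm{Der}(A|R)$. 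Setting $t=\sum_{j\geq1}t_j$ we obtain a \fdm. By construction the derivation $\partial^{t_j}=[t_j,\,\cdot\,]$, cf.\ \eqref{expres1}, is defined on $\mathrm{Hom}(\Sigm[V],A)$, it agrees with $\dho_j$ on $A$, and $\apartial_j$ vanishes on $A$.

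To obtain $\cpartial$, observe that $\partial^{t_j}$ is defined on the $A$-submodule $\mathrm{Hom}_A(V,A)$ of arity-one forms, with values in $\mathrm{Hom}(\Sigm^{j+1}[V],A)$ (though not in $\mathrm{Hom}_A(\Sigm^{j+1}_A[V],A)$), so that the $R$-linear map $\widetilde\partial^{\bra}_j=\dho_j-\partial^{t_j}\colon\mathrm{Hom}_A(V,A)\to\mathrm{Hom}(\Sigm^{j+1}[V],A)$ is defined. Since $t_j$ was chosen so that $\widetilde\partial^{\bra}_j$ vanishes on $A$, the derivation property of $\dho_j$ and of $\partial^{t_j}$ forces $\varphi\mapsto\widetilde\partial^{\bra}_j(\varphi)(x)$ to be $A$-linear in $\varphi\in\mathrm{Hom}_A(V,A)$ for each homogeneous $x\in\Sigm^{j+1}[V]$; hence, the canonical morphism \eqref{cano} being an isomorphism, the pairing $\Sigm^{j+1}[V]\otimes\mathrm{Hom}_A(V,A)\to A$ given by $x\otimes\varphi\mapsto\pm\widetilde\partial^{\bra}_j(\varphi)(x)$ determines an $R$-linear map $\Sigm^{j+1}[V]\to V$, that is, the corestriction to $V$ of a degree $-1$ coderivation $\cpartial^j$ of $\Sigm[V]$ lowering filtration by $j$; put $\cpartial=\sum_{j\geq1}\cpartial^j$, a \ppt. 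It remains to check that the prescribed $\dho_j$ coincides with $\partial^{t_j}+\apartial_j$ on all of $\mathrm{Sym}_A(V,A)$: the two $R$-linear derivations agree on $A$ and on $\mathrm{Hom}_A(V,A)$ by construction, and since, as in Remark~\ref{confirm}, $\mathrm{Sym}_A(V,A)$ is (densely) generated as an $R$-algebra by its elements of arity $0$ and $1$, they agree throughout; in particular $\dho_j=\partial^{t_j}+\apartial_j$ passes to $\mathrm{Sym}_A(V,A)$. Theorem~\ref{char2} then applies and yields the perturbation and twisting-cochain statements. Uniqueness is contained in the construction: the restriction of $\dho_j$ to $A$ determines $t_j$ through the injectivity of \eqref{cano}, and then the restriction of $\dho_j-\partial^{t_j}$ to $\mathrm{Hom}_A(V,A)$ determines $\cpartial^j$ by the same double-dual argument.

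The step I expect to require the most care is the identification $\dho_j=\partial^{t_j}+\apartial_j$ on all of $\mathrm{Sym}_A(V,A)$: one must show that, although the individual operators $\partial^{t_j}$ and $\apartial_j$ are a priori defined only on the ambient graded algebra $\mathrm{Hom}(\Sigm[V],A)$ and need not preserve $\mathrm{Sym}_A(V,A)$, their sum does preserve it and reproduces the given $\dho_j$. This is precisely where the reflexivity of $V$---through the generation-in-low-degrees argument of Remark~\ref{confirm}---is used, exactly as the corresponding passage was the crux of the proofs of Corollary~\ref{char1} and Theorem~\ref{olr}; the remainder is the unwinding of the definitions \eqref{dhoaj} and \eqref{expres1} together with a citation of Theorem~\ref{char2}.
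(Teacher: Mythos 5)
Your proposal is correct and follows essentially the same route as the paper's own proof: extract $t_j$ as the adjoint of $\dho_j$ restricted to $A$, recover $\cpartial^j$ from $\widetilde\partial^{\bra}_j=\dho_j-\partial^{t_j}$ on $\mathrm{Hom}_A(V,A)$ via the double-dual isomorphism, and then invoke Theorem~\ref{char2}. You in fact supply a few details the paper leaves implicit (that $t_j$ lands in $\mathrm{Der}(A|R)$, the $A$-linearity in $\varphi$ needed for the double-dual step, and the identification $\dho_j=\partial^{t_j}+\apartial_j$ on all of $\mathrm{Sym}_A(V,A)$ via generation in arities $0$ and $1$), which is all to the good.
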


\begin{proof}
For $j \geq 1$, 
let
\begin{equation}
t_j\colon \Sigm^j[V] \longrightarrow \mathrm{Der}(A|R)
\subseteq \mathrm{End}(A,A)
\label{adjoin1}
\end{equation}
be the adjoint 
of the composite
\[
\dho_j\colon A \longrightarrow \mathrm{Hom}_A(\Sigm_A^j[V],A)
\subseteq \mathrm{Hom}(\Sigm_A^j[V],A)
\]
of the derivation $\dho_j$
with the injection into 
$\mathrm{Hom}(\Sigm_A^j[V],A)$ as displayed.
This yields a \fdm\  
$t \colon \Sigm[V] \to\mathrm{Der}(A|R) $.
By construction, the derivations
\begin{equation}
\partial^{t_j}\colon 
\mathrm{Hom}_A(\Sigm[V],A)
\longrightarrow
\mathrm{Hom}(\Sigm[V],A)\ (j \geq 1)
\end{equation}
are then defined.

Likewise,
let $j \geq 1$, notice that the composite 
\begin{equation}
\dho_j\colon 
\mathrm{Hom}_A(V,A)
\longrightarrow
\mathrm{Hom}_A(\Sigm_A^{j+1}[V],A)
\subseteq
\mathrm{Hom}(\Sigm^{j+1}[V],A)
\end{equation}
is defined, and
let
\begin{equation}
\widetilde  \partial^{\bra}_j
=\dho_j-\partial^{t_j}
\colon 
\mathrm{Hom}_A(V,A)
\longrightarrow
\mathrm{Hom}(\Sigm^{j+1}[V],A) .
\end{equation}
Consider the pairing
\begin{align*}
V^{\otimes (j+1)} \otimes \mathrm{Hom}_A(V,A) &\longrightarrow A\\
\alpha_1 \otimes\ldots \otimes \alpha_{j+1} 
\otimes \varphi & \longmapsto \widetilde \partial^{\bra}_j\varphi(
\alpha_1, \ldots, \alpha_{j+1} ).
\end{align*}
Since the map from $V$ to its double $A$-dual is an $A$-module
isomorphism, this pairing induces 
an operation 
\[
\widetilde \partial_{\bra}^j
\colon \Sigm^{j+1}[V] \longrightarrow V,
\]
and we extend $\widetilde \partial_{\bra}^j$ to a coderivation
\[
\cpartial^j
\colon \Sigm[V] \longrightarrow \Sigm[V].
\]
This yields a \ppt\ 
$\cpartial= \sum_{j\geq}\cpartial^j$. 

By construction,
the hypotheses of Theorem \ref{char2} are satisfied.
In view of that theorem,
$\cpartial$ is a filtered coalgebra perturbation of the coalgebra
differential $d^0$ of $C=\Sigm[V]$,
and $t$ is a filtered Lie algebra twisting cochain 
$(C,d^0 +\cpartial) \to \mathrm{Der}(A|R)$.
By construction, the
family $\{\dho_j\}_{j \geq 1}$ 
of derivations
arises from these data as asserted.
\end{proof}

Under the circumstances of Theorem \ref{char2},
when each derivation $\dho_j$ is $A$-multilinear
in such a way that
\eqref{multisym} is a \multialgebra,
we refer to the \multialgebra\  as the {\em \mmd\  algebra\/}
associated to the data.
Theorem \eqref{olr2} says that, 
when \eqref{cano} is an isomorphism,
any \multialgebra\  structure on
$\mathrm{Sym}_A(V,A)$
is the \mmd\  algebra
associated to a filtered
coalgebra perturbation $\cpartial$ on $\Sigm[V]$ and
a filtered Lie algebra twisting cochain 
$t\colon (\Sigm[V],d^0+\cpartial) \to \mathrm{Der}(A|R)$
having the property 
that each derivation of the kind $\dho_j$, cf. \eqref{dhoaj}, 
is $A$-multilinear.

\section{sh Lie-Rinehart algebras}
\label{sh}
\subsection{sh Lie algebras}

Let $\g$ be an $R$-chain complex.
An {\em sh-Lie algebra\/} or, equivalently, 
$L_{\infty}$-{\em algebra structure\/}
on $\g$ is a 
coaugmentation filtration lowering
coalgebra perturbation
$\cpartial\colon \Sigmc[s\g] \to \Sigmc[s\g]$
of the coalgebra differential $d^0$ on  $\Sigmc[s\g]$.
We will then refer to the pair
$(\g,\cpartial)$ as an {\em sh Lie algebra\/}.

In the literature, it is common to write the structure
in terms of higher order brackets. 
While we do not use the bracket formalism in the paper,
for the benefit of the reader, we now explain
how the higher order brackets arise:
For $n \geq 2$, consider the graded symmetrization map
\[
\begin{CD}
(s\g)^{\otimes n}
@>{\mathrm{sym}}>>
\Sigmc_n [s\g],\ 
a_1\otimes \ldots\otimes  a_n \longmapsto \tfrac 1{n!} 
\sum \pm a_{\sigma 1}\otimes \ldots a_{\sigma n},
\end{CD}
\]
and use the bracket notation
\begin{equation}
\begin{CD}
\bran\colon
\g^{\otimes n} 
@>{s^{\otimes n}}>>
(s\g)^{\otimes n}
@>{\mathrm{sym}}>>
\Sigmc_n [s\g]
@>{\cpartial^{n-1}}>>
\Sigmc_1 [s\g]
@>{\tau_{\g}}>>
\g
\end{CD}
\label{brannn}
\end{equation}
for the depicted $\g$-valued operation of $n$-variables
ranging over $\g$; by construction,
the operation $\bran$
has homogeneous  degree $n-2$ and is
graded skew symmetric.

Let $(\g,\cpartial)$
be an
$L_{\infty}$-algebra
and suppose that $\g$ is concentrated in degrees $\leq 0$;
we then write
$\g^j=\g_{-j}$ ($j \geq 0$). The $L_{\infty}$-structure
is given by a system of bracket operations
\begin{equation}
\bran_n\colon  \g^{j_1} \times \ldots  \times \g^{j_n}
\longrightarrow  \g^{j_1+\ldots +j_n-n+2}
\end{equation} 
Thus
\begin{align}
\bra_2&\colon  \g^{j_1} \times \g^{j_2}
\longrightarrow  \g^{j_1+j_2}
\\
\braaa&\colon  \g^{j_1} \times \g^{j_2}\times \g^{j_3}
\longrightarrow  \g^{j_1+j_2+j_3-1}
\\
\braaa&\colon  \g^{1} \times \g^{0}\times \g^{0}
\longrightarrow  \g^{0}
\end{align} 
etc.

\subsection{sh Lie algebra action by derivations}

Let $A$ be a differential graded commutative $R$-algebra.
Given an sh Lie algebra
 $(\g,\cpartial)$, we define an {\em sh-action of\/}
$(\g,\cpartial)$ {\em  on\/} $A$ {\em by derivations\/} 
to be a Lie algebra twisting cochain
\begin{equation}
t\colon
(\Sigmc[s\g],d^0+\cpartial)
\longrightarrow
\mathrm{Der}(A|R).
\label{tc}
\end{equation}

Let $t\colon \Sigmc[s\g] \to \mathrm{Der}(A|R)$
and $t'\colon \Sigmc[s\g'] \to \mathrm{Der}(A'|R)$
be two sh actions by derivations and
$\varphi \colon A \to A'$ a morphism
of differential graded algebras. 

\begin{defi}\label{mor1}
Given a morphism
\[
\Phi \colon (\Sigmc[s \g],d^0+\cpartial) \longrightarrow (\Sigmc[s\g'],d^0+\cpartial') 
\]
of differential graded coalgebras,
the pair
\begin{equation}
(\varphi,\Phi)\colon  (A,\g,\cpartial,t)\longrightarrow (A',\g',\cpartial',t')
\end{equation}
is a {\em morphism of sh actions by derivations\/}
when  
the adjoints
$t^{\sharp}\colon \Sigmc[s\g] \otimes A \to A$
and
$(t')^{\sharp}\colon \Sigmc[s\g'] \otimes A' \to A'$
of $t$ and $t'$, respectively, make the diagram
\begin{equation}
\begin{CD}
\Sigmc[s \g] \otimes A @>{t^{\sharp}}>> A
\\
@V{\Phi \otimes \varphi}VV
@V{\varphi}VV
\\
\Sigmc[s\g'] \otimes A' @>{(t')^{\sharp}}>> A'
\end{CD}
\end{equation}
commutative.
\end{defi}

For reasons of variance,  a
general morphism of sh actions does not induce a morphism
between the associated Maurer-Cartan algebras, see Remark \eqref{variance} 
below. In ordinary Lie algebra cohomology theory,
one takes care of the variance problem by means of comorphisms.
To extend the comorphism concept to the present situation, define
\begin{equation}
t^{\flat}\colon \Sigmc[s\g] \longrightarrow \mathrm{Der}(A,A')
\end{equation}
by
$t^{\flat}(x) =\varphi \circ (t(x)),\ x \in \Sigm[s\g]$ and
\begin{equation}
\varphi^{\flat}\colon \mathrm{Der}(A'|R)\longrightarrow \mathrm{Der}(A,A')
\end{equation}
by $\varphi^{\flat}(\delta)=\delta \circ \varphi$.

\begin{defi}\label{mor2}
Given a morphism
\[
 \Phi \colon (\Sigmc[s\g'],d^0+\cpartial') \longrightarrow (\Sigmc[s \g],d^0+\cpartial) 
\]
of differential graded coalgebras,
the pair
\begin{equation}
(\varphi,\Phi)\colon  (A',\g',\cpartial',t')\longrightarrow 
(A,\g,\cpartial,t)
\end{equation}
is a {\em comorphism of sh actions by derivations\/}
when  
the diagram
\begin{equation}
\begin{CD}
\Sigmc[s \g'] @>{\Phi}>> \Sigmc[s\g]
\\
@V{t'}VV
@V{t^{\flat}}VV
\\
 \mathrm{Der}(A'|R) @>{\varphi^{\flat}}>>\mathrm{Der}(A, A') 
\end{CD}
\label{shder}
\end{equation}
is commutative. 
\end{defi}
Notice that, between the two definitions \eqref{mor1} and \eqref{mor2},
there is a difference of variance. Notice also
when $\g$ and $\g'$ are ordinary Lie algebras
and $t$ and $t'$ come from ordinary Lie algebra actions by derivations,
in terms of the bracket notation $\bra\colon \g \times A \to A$
and  ${\bra\colon \g' \times A' \to A'}$
for these actions,
the commutativity of \eqref{shder} comes down to the familiar identity
\begin{equation}
\varphi[\Phi(x),a] =[x,\varphi(a)],\ x\in \g',\ a\in A.
\end{equation}
This identity says that $\varphi$ is a morphism
of $\g'$-modules when $\g'$ acts on $A$ through $\Phi\colon \g' \to \g$. 
The following proposition generalizes a classical 
observation in ordinary Lie algebra cohomology theory.

\begin{prop}
\label{como1}
A comorphism
$(\varphi,\Phi)\colon  (A',\g',\cpartial',t')\to 
(A,\g,\cpartial,t)$
of sh actions by derivations induces a morphism
\begin{equation}
(\varphi_*,\Phi^*)\colon (\mathrm{Hom}(\Sigmc[s\g],A),\dho_0+\apartial +\partial^t)
\longrightarrow
(\mathrm{Hom}(\Sigmc[s\g'],A'),\dho_0+(\apartial)' +\partial^{t'})
\label{como22}
\end{equation}
of differential graded $R$-algebras.
\end{prop}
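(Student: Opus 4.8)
The plan is to produce the two maps explicitly, then verify compatibility with the cup product, with the Hom-differential $\dho_0$, and finally with the perturbations $\apartial$ and $\partial^t$ separately; since the target differential is $\dho_0+(\apartial)'+\partial^{t'}$ it suffices to check each summand, and the structure of the comorphism data is exactly tailored to make each summand work. First I would set $\Phi^*=\mathrm{Hom}(\Phi,A)\colon \mathrm{Hom}(\Sigmc[s\g],A)\to \mathrm{Hom}(\Sigmc[s\g'],A)$ and then postcompose with $\varphi_*$, i.e.\ the combined map sends $f\colon \Sigmc[s\g]\to A$ to $\varphi\circ f\circ\Phi\colon \Sigmc[s\g']\to A'$. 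Because $\Phi$ is a morphism of (cocommutative, coaugmented) differential graded coalgebras and $\varphi$ a morphism of differential graded commutative algebras, the cup product on $\mathrm{Hom}(\Sigmc[s\g],A)$ (induced by the diagonal of $\Sigmc[s\g]$ and the multiplication of $A$) is carried to the cup product on $\mathrm{Hom}(\Sigmc[s\g'],A')$: this is the standard fact that $\mathrm{Hom}(-,-)$ of a coalgebra morphism and an algebra morphism is an algebra morphism, and compatibility with $\dho_0$ is the equally standard fact that it is a chain map, both following at once from $\Phi d^0=d^0\Phi$ and $\varphi d_0=d_0\varphi$ together with the Eilenberg--Koszul bookkeeping.

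The substantive point is compatibility with the two perturbations. For $\apartial$: the comorphism data include (implicitly, via Definition \ref{mor2}) that $\Phi$ intertwines the sh Lie structures, i.e.\ $\Phi\circ\cpartial'=\cpartial\circ\Phi$ as coderivations of the respective cofree coalgebras; then, using the defining formula $\apartial_j(f)=(-1)^{|f|+1}f\circ\cpartial^j$ (cf.\ \eqref{aexplicit}) and its primed analogue, one computes directly that $\varphi\circ(\apartial f)\circ\Phi=(\apartial)'(\varphi\circ f\circ\Phi)$, the sign working out because $|f|=|\varphi\circ f\circ\Phi|$. For $\partial^t$: here the operator is $[t,-]$ for the bracket pairing \eqref{bracketpairing}, and the key input is precisely the commutativity of the square \eqref{shder} in Definition \ref{mor2}, which says $\varphi^\flat\circ t'=t^\flat\circ\Phi$, i.e.\ $t'(x')\circ\varphi=\varphi\circ t(\Phi x')$ for $x'\in\Sigmc[s\g']$; unwinding the bracket pairing and the action \eqref{bracket1} of derivations on $A$ (extended via the universal enveloping algebra), this identity is exactly what is needed for $\varphi\circ(\partial^t f)\circ\Phi=\partial^{t'}(\varphi\circ f\circ\Phi)$. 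Summing the three compatibilities gives that $(\varphi_*,\Phi^*)$ intertwines $\dho=\dho_0+\apartial+\partial^t$ with its primed counterpart, and being simultaneously an algebra map, it is a morphism of differential graded $R$-algebras.

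The main obstacle I anticipate is bookkeeping rather than conceptual: carefully tracking the Eilenberg--Koszul signs through the adjunction between $t^{\sharp}$, $t^{\flat}$, and the bracket pairing \eqref{bracketpairing}, so that the square \eqref{shder} really does feed into the $\partial^t$-compatibility with the signs matching on the nose, and likewise checking that $\Phi\cpartial'=\cpartial\Phi$ is a genuine consequence of $(\varphi,\Phi)$ being a comorphism as opposed to merely $\Phi$ being a coalgebra morphism for the perturbed differentials (it is, since $d^0$ is unperturbed and $\Phi(d^0+\cpartial')=(d^0+\cpartial)\Phi$ combined with $\Phi d^0=d^0\Phi$ forces $\Phi\cpartial'=\cpartial\Phi$). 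Once these two identities are in hand the rest is the routine verification sketched above, and I would present it by stating the three compatibility lemmas ($\dho_0$, $\apartial$, $\partial^t$) and then combining them.
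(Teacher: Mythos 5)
The paper states Proposition \ref{como1} without proof, so your argument must stand on its own. Its overall shape---send $f$ to $\varphi\circ f\circ\Phi$, check multiplicativity against the cup product, check compatibility with the differentials---is right, and your treatment of the cup product and of $\partial^{t}$ via the square \eqref{shder} is correct. The genuine gap is in the $\dho_0$- and $\apartial$-steps: you assert that $\Phi\circ d^0=d^0\circ\Phi$ and $\Phi\circ\cpartial'=\cpartial\circ\Phi$ hold separately, and your justification is circular---you derive the second identity from the first, but the first is nowhere given. Definition \ref{mor2} only requires $\Phi$ to be a morphism of differential graded coalgebras for the \emph{perturbed} differentials, i.e. $\Phi\circ(d^0+\cpartial')=(d^0+\cpartial)\circ\Phi$. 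A coalgebra morphism $\Sigmc[s\g']\to\Sigmc[s\g]$ between cofree cocommutative coalgebras in general has nontrivial higher components $\Sigmc_j[s\g']\to s\g$ (this is the whole point of admitting non-strict, $L_\infty$-type morphisms), and for such a $\Phi$ the single identity $\Phi(d^0+\cpartial')=(d^0+\cpartial)\Phi$ decomposes, filtration degree by filtration degree, into relations that mix $d^0$ with the $\cpartial^{j}$; the operators are not intertwined individually. So two of your three ``compatibility lemmas'' are false as stated.

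The repair is immediate: do not separate $\dho_0$ from $\apartial$. By \eqref{homd} and \eqref{aexplicit}, the sum $\dho_{\bra}=\dho_0+\apartial$ is exactly the Hom-differential of $\mathrm{Hom}\bigl((C,d^0+\cpartial),(A,d)\bigr)$, so that
\begin{equation*}
\varphi\circ(\dho_{\bra}f)\circ\Phi
= d'\circ(\varphi\circ f\circ\Phi)+(-1)^{|f|+1}\,\varphi\circ f\circ(d^0+\cpartial)\circ\Phi
= \bigl(\dho_0+(\apartial)'\bigr)(\varphi\circ f\circ\Phi),
\end{equation*}
using only that $\varphi$ is a chain map and that $\Phi$ intertwines the perturbed coalgebra differentials, together with $|\varphi\circ f\circ\Phi|=|f|$. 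Combined with your (correct) computation that \eqref{shder} yields $\varphi\circ[t,f]\circ\Phi=[t',\varphi\circ f\circ\Phi]$, this proves the proposition; the three-term decomposition should be replaced by this two-term one.
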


\begin{rema}
\label{variance}
A comorphism
$(\varphi,\Phi)\colon  (A',\g',\cpartial',t')\to 
(A,\g,\cpartial,t)$
of sh actions by derivations
having $A=A'$ and $\varphi=\mathrm{Id}$ is simply a morphism
\[
(\mathrm{Id},\Phi)\colon  (A,\g',\cpartial',t')\to 
(A,\g,\cpartial,t)
\]
of sh actions by derivations.
However, for reasons of variance,
a general morphism of sh actions by derivations
cannot induce a morphism of the kind \eqref{como22}.
\end{rema}

An sh Lie algebra action may be written in terms
of bracket operations in the following manner;
again we do not need these brackets but spell them out
for the benefit of the reader.
Let $t$
be an sh-action of the kind \eqref{tc} of $(\g,\cpartial)$ on $A$ by derivations.
For $n \geq 1$,
consider the composite
\begin{equation}
\begin{CD}
\g^{\otimes n} 
@>{s^{\otimes n}}>>
(s\g)^{\otimes n}
@>{\mathrm{sym}}>>
\Sigmc_n [s\g]
@>{t_n}>>
\mathrm{Der}(A|R),
\end{CD}
\label{bran1}
\end{equation}
and write its adjoint as an $A$-valued operation
\begin{equation}
\bracn\colon
\g^{\otimes n} 
\otimes A
\longrightarrow
A
\label{adjoint}
\end{equation}
having $n$ arguments from $\g$ and one argument from $A$.
Given homogeneous $x_1,\ldots,x_n \in \g$, the operation
\[
\{x_1,\ldots,x_n|\,\cdot\,\} \colon A \longrightarrow A
\]
is a homogeneous derivation of $A$ of degree
$|x_1|+\ldots +|x_n|+n-1$.

\subsection{sh Lie-Rinehart structure}
\label{shlr}
In terms of the formalism so far developed,
in the spirit of \cite{MR1854642} (Def. 4.9 p. 157),
we will now propose  
a definition of
an sh Lie-Rinehart algebra.

Let $A$ be a differential graded commutative $R$-algebra
and $(L,\cpartial)$ an sh Lie algebra over $R$.
In view of the isomorphism \eqref{cano1}, we will henceforth identify
$\Sigmc[sL]$ with $\Sigm[sL]$, endowed with the graded shuffle diagonal.
Let $t \colon \Sigm[sL]\to \mathrm{Der}(A|R)$
be an sh-action of $L$ on $A$ by derivations,
and suppose that $L$ carries a
differential graded $A$-module structure.
Take $t=t_1+t_2+\ldots$ to be a filtered Lie algebra twisting 
cochain in the obvious manner, that is, let $t_j$ ($j \geq 1$)
be the component of $t$
defined on the $j$th graded symmetric power $\Sigm^j[sL]$ of the suspension
$sL$ of $L$. Likewise take $\cpartial=\cpartial^1+ \cpartial^2 + \ldots$ 
to be a filtered coalgebra perturbation
in the obvious way, that is, for $j \geq 1$, let $\cpartial^j$
denote the coderivation determined by the constituent
\[ 
\cpartial\colon \Sigm^{j+1}[sL] \longrightarrow sL
\]
of $\cpartial$.
Endow the suspension $sL$ of $L$ with the induced
differential graded $A$-module structure.

\begin{defi}\label{shlrdefi}
The data $(A,L,\cpartial,t)$
constitute an {\em sh Lie-Rinehart algebra\/}
when
$t$ is graded $A$-multilinear and
the data satisfy the axiom
\eqref{sh12} below (for $j \geq 1$):
\begin{equation}
\begin{aligned}
\cpartial^j(\alpha_1,\ldots,\alpha_j, a \alpha_{j+1})
&=
 t_j(\alpha_1,\ldots,\alpha_j)(a) \alpha_{j+1}
\\
&\quad
+(-1)^{(|\alpha_1|+\ldots +|\alpha_j|+1)|a|}
a\cpartial^j(\alpha_1,\ldots,\alpha_j, \alpha_{j+1})
\label{sh12}
\end{aligned}
\end{equation}
where $\alpha_1,\ldots,\alpha_j, \alpha_{j+1}$ are homogeneous members of $sL$ and $a$
is a homogeneous member of  $A$.
\end{defi}

\begin{rema}
\label{notice1}
For $j \geq 1$, the condition \eqref{sh12}
measures
 the deviation of  $\cpartial^j$ 
from being graded $A$-multilinear.
\end{rema}

\begin{rema} \label{grasp}
In \cite{MR1854642} (Def. 4.9 p. 157),
the terminology is \lq sh Lie-Rinehart pair\rq.
An \lq sh Lie-Rinehart pair\rq\ 
is there defined via suitable brackets.
In terms of the brackets \eqref{brannn} and \eqref{adjoint},
the $A$-multilinearity of $t$ is equivalent to 
\begin{align}
\{a x_1,\ldots,x_n|b\}&=(-1)^{|a|}a \{x_1,\ldots,x_n|b\},\ n \geq 1,
\label{sh1}
\end{align}
and \eqref{sh12} is equivalent to
\begin{equation}
\begin{aligned}
{}[x_1,\ldots,x_n, a x_{n+1}]&=\{x_1,\ldots,x_n|a\} x_{n+1} 
\\
&\quad
+(-1)^{(|x_1|+\ldots +|x_n|+n+1)|a|}
a
[x_1,\ldots,x_n, x_{n+1}],
\end{aligned}
\label{sh2}
\end{equation}
for $n \geq 1$, 
where $x_1,\ldots,x_n, x_{n+1}$ are homogeneous members of $L$ and $a,b$
homogeneous members of  $A$.
\end{rema}

\begin{rema}
\label{adjusted}
For $n=1$, the $A$-multilinearity of $t$ or,
equivalently, the condition
\eqref{sh1}, and 
the axiom
\eqref{sh12} or, equivalently, 
\eqref{sh2},
come down to \eqref{1.1.a}
and \eqref{1.1.b}, respectively,
adjusted to the graded situation.
\end{rema}

\begin{rema}
\label{common}
Write
the differential on $A$ as a unary bracket
$
\{\,\cdot\,\}\colon A \longrightarrow A
$
and that
on $L$ as a unary bracket
$
[\,\cdot\,]\colon L \longrightarrow L.
$
In terms of this notation, \eqref{sh2}
extends to the identity
\begin{align}
[ax]&=\{a\}x+(-1)^{|a|}a[x]
\label{sh2e}
\end{align}
involving  the unary brackets.
The identity \eqref{sh2e} precisely says that $L$ is a differential graded $A$-module.
A unitary bracket of the kind
$
[\,\cdot\,]\colon L \longrightarrow L
$
encapsulating a differential occurs, e.g.,
in \cite{MR2163405}.
\end{rema}

Let $(A,L,\cpartial,t)$
and  $(A',L',\cpartial',t')$
be two sh Lie-Rinehart algebras.
Given a morphism $\varphi \colon A \to A'$ of differential graded
algebras,
we view $(\Sigm_{A'}[sL'],d^0)$ as a differential graded
$A$-module via $\varphi$ in the obvious manner.

\begin{defi}\label{mor3}
A morphism
\begin{equation}
(\varphi,\Phi)\colon  (A,L,\cpartial,t)\longrightarrow (A',L',\cpartial',t')
\end{equation}
of sh actions
is a {\em morphism of sh Lie-Rinehart algebras\/}
when $\Phi$ passes to a morphism
$\Phi \colon (\Sigm_A[sL],d^0) \to (\Sigm_{A'}[sL'],d^0)$
of differential graded $A$-modules
(where the notation $\Phi$ is abused).
\end{defi}

This extends the notion of an ordinary morphism of Lie-Rinehart algebras, 
cf. \cite{poiscoho}.

\subsection{Multi derivation Maurer-Cartan characterization of an sh Lie-Rinehart structure}
\label{mcsh}

Let $(L,d^0)$ be a chain complex,
let 
$C=\Sigmc[sL]$ and write
the induced coalgebra differential  on $\Sigmc[sL]$ as $d^0$. 
Using the canonical isomorphism  $\Sigm[sL] \to  \Sigmc[sL]$
of differential graded cocommutative coalgebras,
cf. \eqref{cano1},
we interpret
\begin{equation}
\mathrm{Hom}(C,A)=  \mathrm{Hom}(\Sigmc[sL],A)
\end{equation} 
as the algebra
$\mathrm{Sym}(sL,A)$ of $A$-valued
$R$-multilinear graded symmetric functions on $sL$.

We apply the results in the previous section, with $V=sL$.
We maintain the notation \eqref{aexplicit}-\eqref{dhoa}.

\begin{thm}
\label{multishp}
Let
$(A,L,\cpartial,t)$ 
be an  sh Lie-Rinehart algebra.
For $j \geq 1$,
the derivation $\dho_j$ of $\mathrm{Hom}(\Sigm[sL],A)$
passes to a derivation of $\mathrm{Sym}_A(sL,A)=\mathrm{Hom}_A(\Sigm_A[sL],A)$,
and the resulting data of the kind
{\rm \eqref{multisym}}, viz.
\begin{equation}
(\mathrm{Sym}_A(sL,A),\dho_0, \dho_1, \dho_2, \ldots),
\label{multisym3}
\end{equation}
constitute a \mmd\  algebra.
\end{thm}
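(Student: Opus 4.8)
The plan is to reduce the statement to three facts and verify only the one that carries genuine content. The three facts are: (ii) $\cpartial$ is a coalgebra perturbation of the coalgebra differential $d^0$ on $\Sigm[sL]$ and $t$ is a filtered Lie algebra twisting cochain $(\Sigm[sL],d^0+\cpartial)\to\mathrm{Der}(A|R)$ --- both hold by hypothesis, since an sh Lie algebra structure is by definition such a $\cpartial$ and an sh action by derivations is by definition such a $t$; (i) for $j\geq 1$, the derivation $\dho_j=\partial^{t_j}+\apartial_j$ of the ambient algebra $\mathrm{Hom}(\Sigm[sL],A)$ restricts to a derivation of the subalgebra $\mathrm{Sym}_A(sL,A)=\mathrm{Hom}_A(\Sigm_A[sL],A)$; and (iii) once (i) and (ii) hold, Corollary \ref{cruc2} at once yields that \eqref{multisym3} is a \multialgebra, and since each $\dho_j$ is then $A$-multilinear this is, by definition, the \mmd\ algebra associated to the data. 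So the only real work is (i), which is the sh-generalization of Proposition \ref{cruc}.

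For (i) I would argue as follows. Since $\dho_j$ is a derivation of $\mathrm{Hom}(\Sigm[sL],A)$ and, as in Remark \ref{confirm}, the graded $A$-algebra $\mathrm{Sym}_A(sL,A)$ is generated --- in a suitable topology when $L$ is not finitely generated over $A$ --- by its constituents of filtration degree $0$ and $1$, namely $A$ and $\mathrm{Hom}_A(sL,A)$, it suffices to check that $\dho_j$ carries each of these into $\mathrm{Sym}_A(sL,A)$. For $a\in A$: the coderivation $\cpartial^j$ is the one determined by a map $\Sigm^{j+1}[sL]\to sL$, hence vanishes on $\Sigm^{\leq j}[sL]$, so by \eqref{aexplicit} the form $\apartial_j(a)=\pm a\circ\cpartial^j$ is zero; therefore $\dho_j(a)=\partial^{t_j}(a)=[t_j,a]$, which as a form on $\Sigm^j[sL]$ sends $(\alpha_1,\ldots,\alpha_j)$ to $\pm t_j(\alpha_1,\ldots,\alpha_j)(a)$ and is graded $A$-multilinear by the $A$-multilinearity of $t$. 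For $\varphi\in\mathrm{Hom}_A(sL,A)$: I would evaluate $\dho_j\varphi=\partial^{t_j}\varphi+\apartial_j\varphi$ at $(\alpha_1,\ldots,\alpha_j,a\alpha_{j+1})$. The cup bracket $[t_j,\varphi]$ splits into the term where $a\alpha_{j+1}$ is fed to $\varphi$ and the terms where it is among the arguments of $t_j$; in the latter, $A$-multilinearity of $t$ extracts $a$; in the former, $A$-linearity of $\varphi$ followed by the Leibniz rule for the derivation $t_j(\alpha_1,\ldots,\alpha_j)$ on $A$ produces $\pm a\,t_j(\alpha_1,\ldots,\alpha_j)(\varphi(\alpha_{j+1}))$ together with the spurious term $\pm(t_j(\alpha_1,\ldots,\alpha_j)(a))\varphi(\alpha_{j+1})$. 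Meanwhile $\apartial_j\varphi$ evaluated there equals $\pm\varphi(\cpartial^j(\alpha_1,\ldots,\alpha_j,a\alpha_{j+1}))$, and the sh Lie-Rinehart axiom \eqref{sh12} rewrites $\cpartial^j(\alpha_1,\ldots,\alpha_j,a\alpha_{j+1})$ as $t_j(\alpha_1,\ldots,\alpha_j)(a)\,\alpha_{j+1}$ plus the $A$-multilinear remainder $\pm a\,\cpartial^j(\alpha_1,\ldots,\alpha_j,\alpha_{j+1})$. The spurious term coming from $\partial^{t_j}\varphi$ and the term $\pm(t_j(\alpha_1,\ldots,\alpha_j)(a))\varphi(\alpha_{j+1})$ coming from $\apartial_j\varphi$ cancel, and what is left is exactly $a\cdot(\dho_j\varphi)(\alpha_1,\ldots,\alpha_j,\alpha_{j+1})$; $A$-linearity in the remaining slots follows from the graded symmetry of $\cpartial^j$ and of $t_j$. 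Hence $\dho_j\varphi\in\mathrm{Sym}_A(sL,A)$, which establishes (i). Assembling the pieces, Corollary \ref{cruc2} applied to $C=\Sigm[sL]$ gives that \eqref{multisym3} is a \multialgebra, and the $A$-multilinearity of the $\dho_j$ makes it the \mmd\ algebra associated to $(A,L,\cpartial,t)$.

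The main obstacle is not conceptual: as in Proposition \ref{cruc}, the mathematical substance is the single cancellation between the Leibniz boundary term of the cup bracket and the defect-of-$A$-multilinearity term governed by axiom \eqref{sh12}. The real labor lies in verifying that the Eilenberg-Koszul signs --- those built into \eqref{aexplicit}, into the cup bracket on $\mathrm{Hom}(\Sigm[sL],A)$, and into the sign $(-1)^{(|\alpha_1|+\ldots+|\alpha_j|+1)|a|}$ appearing in \eqref{sh12} --- line up so that the cancellation is exact in every bidegree and in every slot. These sign conventions were chosen precisely so that this works out, so the verification, while somewhat tedious, is essentially forced; I would organize it by reducing, via graded symmetry, to $A$-linearity in the last argument and then tracking the two sources of the term $t_j(\alpha_1,\ldots,\alpha_j)(a)\,\alpha_{j+1}$.
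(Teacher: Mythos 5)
Your proposal is correct and follows essentially the same route as the paper: the paper's proof likewise reduces to checking that $\dho_j$ maps $A$ and $\mathrm{Hom}_A(sL,A)$ into $A$-multilinear forms, establishes the two identities for $\apartial_j\varphi$ (via axiom \eqref{sh12}) and $\partial^{t_j}\varphi$ (via the cup bracket), and observes that the two occurrences of $t_j(\alpha_1,\ldots,\alpha_j)(a)\,\varphi(\alpha_{j+1})$ cancel, with the multialgebra property then supplied by Corollary \ref{cruc2}. The only cosmetic difference is that you invoke the generation-in-low-filtration-degree argument of Remark \ref{confirm} explicitly, where the paper leaves it implicit.
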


\begin{proof} Let $ j \geq 1$. Consider the operator 
$\dho_j= \apartial_j +\partial^{t_j}$ on 
$\mathrm{Hom}(\Sigm[sL],A)$.
By construction
\begin{align}
\dho_j&= \partial^{t_j}\colon A \to \mathrm{Hom}(\Sigm^j[sL],A)
\label{op1}
\\
\dho_j&= \apartial_j +\partial^{t_j}\colon
\mathrm{Hom}(sL,A) \to \mathrm{Hom}(\Sigm^{j+1}[sL],A).
\label{op2}
\end{align}
Let $u,a \in A$ and $\alpha_1,\ldots,\alpha_j \in  sL$
be homogeneous.
Since $t_j$ is $A$-multilinear,
\begin{align*}
\partial^{t_j}u(a\alpha_1,\ldots,\alpha_j)&=[t_j,u](a\alpha_1,\ldots,\alpha_j)\\
&= (-1)^{|u|(|a|+|\alpha_1|+ \ldots +|\alpha_j|)}t_j(a\alpha_1,\ldots,\alpha_j)(u)
\\
&=
(-1)^{|u||a|}
(-1)^{|u|(|\alpha_1|+ \ldots +|\alpha_j|)}
(-1)^{|a|}a t_j(\alpha_1,\ldots,\alpha_j)(u)
\\
&=(-1)^{(|u|+1)|a|}
a [t_j,u](\alpha_1,\ldots,\alpha_j)
\\
&=(-1)^{(|u|+1)|a|}
a(\partial^{t_j}u)(\alpha_1,\ldots,\alpha_j).
\end{align*}
Likewise, let
$a \in A$, $\varphi\in \mathrm{Hom}(sL,A)$ 
and $\alpha_1,\ldots,\alpha_j,\alpha_{j+1} \in  sL$
be homogeneous.
We claim first that
\begin{equation}
\begin{aligned}
\apartial_j\varphi(\alpha_1,\ldots,\alpha_j,a\alpha_{j+1})
&=(-1)^{(|\alpha_1|+\ldots +|\alpha_j|+a)|\varphi|+1}
t_j(\alpha_1,\ldots,\alpha_j)(a) \varphi(\alpha_{j+1})
\\
&\quad +(-1)^{(|\alpha_1|+\ldots +|\alpha_j|+1+|\varphi|)|a|}
a\apartial_j\varphi(\alpha_1,\ldots,\alpha_j, \alpha_{j+1}).
\end{aligned}
\label{redo1}
\end{equation}
Indeed,
by
\eqref{sh12}, viz.
\begin{align*}
\cpartial^j(\alpha_1,\ldots,\alpha_n, a \alpha_{j+1})
&= t_j(\alpha_1,\ldots,\alpha_j)(a) \alpha_{j+1}
\\
&\quad +(-1)^{(|\alpha_1|+\ldots +|\alpha_j|+1)|a|}
a\cpartial^j(\alpha_1,\ldots,\alpha_j, \alpha_{j+1})
\\
\apartial_j\varphi(\alpha_1,\ldots,\alpha_j,a\alpha_{j+1})
&=(-1)^{|\varphi|+1}
\varphi(\cpartial^j(\alpha_1,\ldots,\alpha_j,a\alpha_{j+1}))
\\
&=
(-1)^{|\varphi|+1} \varphi(t_j(\alpha_1,\ldots,\alpha_j)(a) \alpha_{j+1})
\\
&\quad
+ (-1)^{(|\alpha_1|+\ldots +|\alpha_j|+1)|a|+|\varphi|+1}
\varphi(a
\cpartial^j(\alpha_1,\ldots,\alpha_j, \alpha_{j+1}))
\\
&=(-1)^{(|\alpha_1|+\ldots +|\alpha_j|-1+a)|\varphi|+|\varphi|+1}
t_j(\alpha_1,\ldots,\alpha_j)(a) \varphi(\alpha_{j+1})
\\
&\quad +(-1)^{(|\alpha_1|+\ldots +|\alpha_j|+1+|\varphi|)|a|+|\varphi|+1}
a\varphi(
\cpartial^j(\alpha_1,\ldots,\alpha_j, \alpha_{j+1}))
\\
&=(-1)^{(|\alpha_1|+\ldots +|\alpha_j|+a)|\varphi|+1}
t_j(\alpha_1,\ldots,\alpha_j)(a) \varphi(\alpha_{j+1})
\\
&\quad +(-1)^{(|\alpha_1|+\ldots +|\alpha_j|+1+|\varphi|)|a|}
a\apartial_j\varphi(\alpha_1,\ldots,\alpha_j, \alpha_{j+1})
\end{align*}
whence \eqref{redo1}.

Next we claim
\begin{equation}
\begin{aligned}
\partial^{t_j}(\varphi)(\alpha_1,\ldots,\alpha_j,a\alpha_{j+1})
&=
(-1)^{(|\alpha_1|+\ldots+|\alpha_j|-1+|\varphi|)|a| }
a \partial^{t_j}(\varphi)(\alpha_1,\ldots,\alpha_j,\alpha_{j+1})
\\
&\quad +
 (-1)^{(|\alpha_1|+\ldots +|\alpha_j|+a)|\varphi|}
t_j(\alpha_1,\ldots,\alpha_j)(a) \varphi(\alpha_{j+1}).
\end{aligned}
\label{redo2}
\end{equation}
Indeed,
for $j=1$
\begin{align*}
[t_1,\varphi](\alpha_1,a\alpha_{2})
&=\bra\circ(t_1\otimes \varphi)(\alpha_1 \otimes (a\alpha_2)
+(-1)^{|\alpha_1|(|a|+|\alpha_2|)}(a\alpha_2) \otimes \alpha_1)
\\
&=
(-1)^{(|\alpha_1|-1+|\varphi|)|a| }
a [t_1,\varphi](\alpha_1,\alpha_{2})
+(-1)^{|\varphi|(|\alpha_1|+|a|)}t_1(\alpha_1)(a)\varphi(\alpha_2).
\end{align*}
The same kind of reasoning shows that,
for general $j \geq 1$,
\begin{align*}
[t_j,\varphi](\alpha_1,\ldots,\alpha_j,a\alpha_{j+1})
&=
(-1)^{(|\alpha_1|+\ldots+|\alpha_j|-1+|\varphi|)|a| }
a [t_j,\varphi](\alpha_1,\ldots,\alpha_j,\alpha_{j+1})
\\
&\quad +
 (-1)^{(|\alpha_1|+\ldots +|\alpha_j|+a)|\varphi|}
t_j(\alpha_1,\ldots,\alpha_j)(a) \varphi(\alpha_{j+1})
\end{align*}
whence \eqref{redo2}.

Combining \eqref{redo1} and \eqref{redo2},
since the summands involving
$t_j(\alpha_1,\ldots,\alpha_j)(a) \varphi(\alpha_{j+1})$
cancel out,
we find
\begin{align*}
\dho_j(\varphi)(\alpha_1,\ldots,\alpha_j,a\alpha_{j+1})
&= \apartial_j (\varphi)(\alpha_1,\ldots,\alpha_j,a\alpha_{j+1})
+\partial^{t_j}(\varphi)(\alpha_1,\ldots,\alpha_j,a\alpha_{j+1})
\\
&=(-1)^{(|\alpha_1|+\ldots+|\alpha_j|+|\varphi|+1)|a|}
a\apartial_j\varphi(\alpha_1,\ldots,\alpha_j,\alpha_{j+1})
\\
&\quad
+(-1)^{(|\alpha_1|+\ldots+|\alpha_j|+|\varphi|+1)|a|}
a[t_j,\varphi](\alpha_1,\ldots,\alpha_j,\alpha_{j+1})
\\
&=(-1)^{(|\alpha_1|+\ldots+|\alpha_j|+|\varphi|+1)|a|}
a\dho_j(\varphi)(\alpha_1,\ldots,\alpha_j,\alpha_{j+1})
\end{align*}
as asserted. \end{proof}

Under the circumstances of Theorem \ref{multishp},
we refer to the \multialgebra\  \eqref{multisym3}
as the {\em \mmd\  algebra
associated to the sh-Lie-Rinehart algebra\/} $(A,L,\cpartial,t)$.

\begin{rema}
\label{classical3}
By construction, relative to the
filtration degree and the complementary degree,
the \mmd\  algebra
associated to an sh-Lie-Rinehart algebra
is actually a bigraded  \mmd\  algebra, cf. 
Remarks \ref{classical1} and \ref{classical} above.
\end{rema}

The \mmd\  algebra associated to an sh Lie-Rinehart algebra
is natural in a sense we now explain.

\begin{defi}\label{mor4}
A comorphism
\[
(\varphi,\Phi)\colon (A',L',\cpartial',t') \longrightarrow (A,L,\cpartial,t)
\]
of sh actions is a {\em comorphism
of Lie-Rinehart algebras\/}  
when $\Phi\colon \Sigm[sL'] \to \Sigm[sL]$
passes to a morphism
$\widehat \Phi\colon \Sigm_{A'}[sL'] \to A'\otimes_A\Sigm_A[sL]$
of differential graded $A'$-modules 
making the diagram
\begin{equation}
\begin{CD}
\Phi\colon \Sigm_{A'}[sL'] @>{\widehat \Phi}>>  A'\otimes_A\Sigm_A[sL]
\\
@V{t'}VV
@V{t^{\flat}}VV
\\
 \mathrm{Der}(A'|R) @>{\varphi^{\flat}}>>\mathrm{Der}(A, A') 
\end{CD}
\label{shder2}
\end{equation}
commutative. 
\end{defi}
Proposition \ref{como1} now extends to the following.

\begin{prop}
\label{como2}
A comorphism
$(\varphi,\Phi)\colon (A',L',\cpartial',t') \longrightarrow (A,L,\cpartial,t)$
of sh Lie-Rine\-hart algebras induces a morphism
\begin{equation}
(\varphi_*,\Phi^*)\colon 
(\mathrm{Sym}_A(sL,A), \dho_0,\dho_1,\dho_2,\ldots)
\longrightarrow (\mathrm{Sym}_{A'}(sL',A'),\dho_0,\dho'_1,\dho'_2+\ldots )
\label{como23}
\end{equation}
between the associated \mmd\  algebras.
\end{prop}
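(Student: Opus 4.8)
The plan is to exhibit the asserted map as the restriction of the morphism already furnished by Proposition \ref{como1} and then to check the two features not covered there, namely compatibility with the filtrations and descent to the subquotients of $A$-multilinear forms. First I would observe that a comorphism of sh Lie-Rinehart algebras is, by Definition \ref{mor4}, in particular a comorphism of sh actions by derivations, so Proposition \ref{como1} applies verbatim and produces the morphism $(\varphi_*,\Phi^*)\colon f\mapsto\varphi\circ f\circ\Phi$ of differential graded $R$-algebras from $(\mathrm{Hom}(\Sigm[sL],A),\dho_0+\apartial+\partial^t)$ to $(\mathrm{Hom}(\Sigm[sL'],A'),\dho_0+(\apartial)'+\partial^{t'})$. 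By \eqref{dhoa} the differential here is precisely the total differential $\dho=\dho_0+\sum_{j\geq1}\dho_j$ of the ambient object, so compatibility with products and with $\dho$, together with all the Eilenberg-Koszul signs, is already in hand; what is left is purely a matter of filtrations and of the $A$-module structures.

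Next I would record the filtration statement. Since $\Phi\colon\Sigm[sL']\to\Sigm[sL]$ is a morphism of coaugmented differential graded coalgebras it preserves the coaugmentation filtration, $\Phi(\Sigm^{\leq j}[sL'])\subseteq\Sigm^{\leq j}[sL]$; dually $(\varphi_*,\Phi^*)$ carries $\mathrm{Hom}(\Sigm[sL],A)^{\ell}$ into $\mathrm{Hom}(\Sigm[sL'],A')^{\ell}$ for every $\ell$, so it is compatible with the filtration \eqref{descend2} and, granting the next step, with the induced filtration \eqref{augmentationfiltsym}.

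The one point requiring genuine work is the descent to $\mathrm{Sym}_A$, and it is here that the \emph{extra} clause in the definition of a comorphism of sh Lie-Rinehart algebras (Definition \ref{mor4}), beyond a mere comorphism of sh actions, is used: $\Phi$ passes to a morphism $\widehat\Phi\colon\Sigm_{A'}[sL']\to A'\otimes_A\Sigm_A[sL]$ of differential graded $A'$-modules compatible with $\Phi$ through the canonical maps $\Sigm[sL]\to\Sigm_A[sL]$, $\Sigm[sL]\to A'\otimes_A\Sigm_A[sL]$ and $\Sigm[sL']\to\Sigm_{A'}[sL']$. Given an $A$-multilinear form $f\in\mathrm{Hom}_A(\Sigm_A[sL],A)=\mathrm{Sym}_A(sL,A)$, I would extend it along $\varphi$ to the $A'$-linear map $\varphi\otimes_Af\colon A'\otimes_A\Sigm_A[sL]\to A'\otimes_AA\cong A'$ and consider $(\varphi\otimes_Af)\circ\widehat\Phi\in\mathrm{Hom}_{A'}(\Sigm_{A'}[sL'],A')=\mathrm{Sym}_{A'}(sL',A')$; a diagram chase with the square above and the identification $A'\otimes_AA\cong A'$ then shows that this $A'$-multilinear form is precisely $(\varphi_*,\Phi^*)(f)=\varphi\circ f\circ\Phi$, viewed inside $\mathrm{Hom}(\Sigm[sL'],A')$. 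Hence $(\varphi_*,\Phi^*)$ restricts to $\mathrm{Sym}_A(sL,A)\to\mathrm{Sym}_{A'}(sL',A')$, and by Theorem \ref{multishp} both of these, with the systems $\dho_0,\dho_1,\dho_2,\ldots$ and $\dho_0,\dho'_1,\dho'_2,\ldots$, are multi derivation Maurer-Cartan algebras, so the restricted map is a filtration-preserving morphism of differential graded $R$-algebras between them, i.e. the desired morphism; on associated bigraded objects (Remark \ref{classical2}) it moreover refines to a bigraded morphism intertwining each $\dho_j$ with $\dho'_j$ separately.

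I expect the main obstacle to be exactly the bookkeeping in this descent step: reconciling the coalgebra morphism $\Phi$---which is \emph{not} a morphism of $A$-algebras, the map $\Sigm[sL]\to\Sigm_A[sL]$ failing to be a coalgebra morphism---with the base change $A'\otimes_A(-)$ and with the variance of $\varphi\colon A\to A'$, and keeping track of which identifications ($\Sigm[sL]\cong\Sigmc[sL]$ of \eqref{cano1}, $A'\otimes_AA\cong A'$, the inclusions $\mathrm{Sym}_A\hookrightarrow\mathrm{Sym}$) are invoked where. By contrast the product- and differential-compatibility is inherited wholesale from Proposition \ref{como1}, and the filtration statement is immediate; one should merely take care that $(\varphi_*,\Phi^*)$ is not claimed to intertwine the individual $\dho_j$ on the nose---this need not hold, since $\Phi$ need not preserve filtration degree---but only after passing to the associated bigraded multi derivation chain algebras.
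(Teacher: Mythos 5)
Your proposal is correct and follows essentially the route the paper itself indicates: the paper gives no detailed proof but records precisely your descent step, namely that the map factors as $\varphi_*\colon \mathrm{Hom}_A(\Sigm_A[sL],A)\to\mathrm{Hom}_{A'}(A'\otimes_A\Sigm_A[sL],A')$ followed by pullback along $\widehat\Phi$, with the differential- and product-compatibility inherited from Proposition \ref{como1}. Your added remarks on filtration preservation and on not claiming that the individual $\dho_j$ are intertwined before passing to the associated bigraded objects are sound and consistent with the paper's setup.
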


We note that, by construction,
\[
\mathrm{Sym}_A(sL,A) \longrightarrow \mathrm{Sym}_{A'}(sL',A')
\]
is the composite of
\[
\varphi_*\colon \mathrm{Hom}_A(\Sigm_A[sL],A) \longrightarrow 
\mathrm{Hom}_{A'}(A'\otimes_A\Sigm_A[sL],A')
\]
with
\[
\Phi^*\colon \mathrm{Hom}_{A'}(A'\otimes_A\Sigm_A[sL],A')
\longrightarrow \mathrm{Hom}_{A'}(\Sigm_{A'}[sL'],A').
\]
Proposition \ref{como2} says that this composite
is compatible with the \mmd\  algebra structures.

The comorphism concept for sh Lie-Rinehart algebras
generalizes that for ordinary Lie-Rinehart 
algebras, cf. \cite{MR1235995}, where this is explained for Lie algebroids.
As already noted in Remark \ref{variance}, for reasons of variance, 
a morphism 
\[
(\varphi,\Phi)\colon  (A,L,\cpartial,t)\longrightarrow (A',L',\cpartial',t')
\]
of sh Lie-Rinehart algebras,
as defined in Subsection \ref{shlr}  above, does not induce a
morphism between the associated \mmd\  algebras in 
an obvious manner except when $A=A'$ and $\varphi$
is the identity---in this case the notions of morphism and comorphism 
coincide---,
not even for the special case of 
a morphism of
ordinary Lie-Rinehart algebras (except when $A=A'$ and $\varphi$ 
is the identity).

\begin{thm}
\label{shlrchar0}
Let $A$ be a differential graded commutative algebra and
$L$ a differential graded
$A$-module having the property that the canonical
$A$-module morphism from $L$ to its double $A$-dual
is injective.
Let $\cpartial$ be a \ppt\ 
on $\Sigm[sL]$
and $t$ a \fdm\  of the kind 
{\rm \eqref{tc}},
and let
$\dho_0$ denote the algebra differential
on $\mathrm{Sym}_A(sL,A)$ induced from the differentials
on $L$ and $A$. Then
$(A,L,\cpartial,t)$ 
is an  sh Lie-Rinehart algebra
if and only if
$(\mathrm{Sym}_A(sL,A),\dho_0, \dho_1, \dho_2, \ldots)$
is a \multialgebra,
necessarily the \mmd\  algebra 
associated to $(A,L,\cpartial,t)$.
\end{thm}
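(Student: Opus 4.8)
The plan is to prove the two implications separately: the \emph{only if} direction is essentially a restatement of Theorem \ref{multishp}, while the \emph{if} direction rests on Theorem \ref{char2} together with a careful reversal of the computation in the proof of Theorem \ref{multishp}.

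\textbf{The forward direction.} Suppose $(A,L,\cpartial,t)$ is an sh Lie-Rinehart algebra. By Definition \ref{shlrdefi} the morphism $t$ is graded $A$-multilinear and the axiom \eqref{sh12} holds, while by the definition of an sh Lie algebra $\cpartial$ is a coalgebra perturbation of $d^0$ and, by the definition of an sh action, $t$ is a Lie algebra twisting cochain. Theorem \ref{multishp} then says precisely that, for $j\geq 1$, each $\dho_j$ passes to a derivation of $\mathrm{Sym}_A(sL,A)$ and that $(\mathrm{Sym}_A(sL,A),\dho_0,\dho_1,\dho_2,\ldots)$ is the associated \mmd\ algebra; in particular it is a \multialgebra.

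\textbf{The converse.} Assume now that each $\dho_j$ passes to $\mathrm{Sym}_A(sL,A)$ and that $(\mathrm{Sym}_A(sL,A),\dho_0,\dho_1,\dho_2,\ldots)$ is a \multialgebra. First I would apply Theorem \ref{char2} with $V=sL$: its hypothesis on the double $A$-dual is exactly the present one, so it yields that $\cpartial$ is a (filtered) coalgebra perturbation of the coalgebra differential $d^0$ on $\Sigm[sL]$ --- hence $(L,\cpartial)$ is an sh Lie algebra --- and that $t\colon(\Sigm[sL],d^0+\cpartial)\to\mathrm{Der}(A|R)$ is a filtered Lie algebra twisting cochain, i.e.\ an sh action of $L$ on $A$ by derivations. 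It then remains to recover the two $A$-linearity conditions of Definition \ref{shlrdefi}. For the graded $A$-multilinearity of $t$, I would use that $\dho_j$ restricted to $A$ equals $\partial^{t_j}$, cf.\ \eqref{op1}, so that for $u\in A$ the form $\partial^{t_j}u$ lies in $\mathrm{Hom}_A(\Sigm^j_A[sL],A)$; evaluating as in the first display of the proof of Theorem \ref{multishp} gives $t_j(a\alpha_1,\ldots,\alpha_j)(u)=\pm\,a\,t_j(\alpha_1,\ldots,\alpha_j)(u)$ for all $u$, and since a derivation is determined by its values, $t_j$ is $A$-linear in its first argument; graded symmetry of $t_j$ on $\Sigm^j[sL]$ upgrades this to graded $A$-multilinearity of $t_j$, hence of $t$.

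\textbf{Recovering \eqref{sh12}, and the main obstacle.} Finally I would rerun \eqref{redo1} and \eqref{redo2} of the proof of Theorem \ref{multishp} in reverse. Identity \eqref{redo2} holds once $t_j$ is $A$-multilinear, which we now have; identity \eqref{redo1} used \eqref{sh12}, so I introduce the deviation
\[
\delta^j(\alpha_1,\ldots,\alpha_{j+1},a)=\cpartial^j(\alpha_1,\ldots,\alpha_j,a\alpha_{j+1})-t_j(\alpha_1,\ldots,\alpha_j)(a)\alpha_{j+1}-(-1)^{(|\alpha_1|+\ldots+|\alpha_j|+1)|a|}a\cpartial^j(\alpha_1,\ldots,\alpha_j,\alpha_{j+1})\in sL
\]
and carry it through. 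For $\varphi\in\mathrm{Hom}_A(sL,A)$ the two $t_j(\ldots)(a)\varphi(\alpha_{j+1})$ contributions still cancel, leaving
\[
\dho_j\varphi(\alpha_1,\ldots,\alpha_j,a\alpha_{j+1})=\pm\,a\,\dho_j\varphi(\alpha_1,\ldots,\alpha_j,\alpha_{j+1})+(-1)^{|\varphi|+1}\varphi(\delta^j(\alpha_1,\ldots,\alpha_{j+1},a)).
\]
Since $\dho_j$ passes to $\mathrm{Sym}_A(sL,A)$, the left-hand side equals $\pm\,a\,\dho_j\varphi(\ldots)$, so $\varphi(\delta^j)=0$ for every $A$-linear $\varphi\colon sL\to A$; injectivity of the canonical map from $sL$ to its double $A$-dual then forces $\delta^j=0$, which is exactly \eqref{sh12}. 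Thus $(A,L,\cpartial,t)$ satisfies all the requirements of Definition \ref{shlrdefi}, and by Theorem \ref{multishp} the \multialgebra\ at hand is its associated \mmd\ algebra. The hard part is keeping the Eilenberg--Koszul signs consistent when transporting \eqref{redo1} and \eqref{redo2} across; and one should note, as in Remark \ref{confirm}, that since $\mathrm{Sym}_A(sL,A)$ is generated (densely, in a suitable topology, when $L$ is not finitely generated) by its elements of filtration degree $0$ and $1$, it suffices to carry out all of the above on $A$ and on $\mathrm{Hom}_A(sL,A)$.
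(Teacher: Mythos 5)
Your proposal is correct and follows essentially the same route as the paper: the forward direction is Theorem \ref{multishp}, the converse invokes Theorem \ref{char2} to obtain the coalgebra perturbation and twisting-cochain properties, and then recovers the $A$-multilinearity of $t$ and the axiom \eqref{sh12} by reading the computations \eqref{redo1}--\eqref{redo2} backwards, using injectivity into the double $A$-dual at the last step. Your explicit deviation term $\delta^j$ is just a repackaging of the paper's derivation of \eqref{sh121}, so the two arguments coincide in substance.
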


\begin{proof} Theorem \ref{multishp} says that the condition is necessary.
Thus suppose that 
\[
(\mathrm{Sym}_A(sL,A),\dho_0, \dho_1, \dho_2, \ldots)
\]
is a \multialgebra.
Theorem \ref{char2} implies that $(\Sigm[sL],\cpartial)$ is a differential graded cocommutative coalgebra, that is,
that $(L,\cpartial)$ is an sh Lie algebra, and that
$t\colon \Sigm[sL] \to \mathrm{Der}(A|R)$
is a Lie algebra twisting cochain. 
Thus it remains to show that $t$ is $A$-multilinear and satisfies
the axiom \eqref{sh12}. Formally exactly the same reasoning as that 
in the proof of Corollary \ref{char1} above establishes these claims.

Reading backwards the reasoning in the proof of Theorem \ref{multishp}
yields the details: 
Indeed, let $j \geq 1$.
Let $u,a \in A$ and $\alpha_1,\ldots,\alpha_j \in  sL$
be homogeneous.
Since $\apartial_j(u)=0$ and since
$\partial^{t_j}u=[t_j,u]$, the hypothesis  implies that
\begin{align*}
t_j (a\alpha_1,\ldots,\alpha_j)(u)
&=(-1)^{(|a|+|\alpha_1| +\ldots +|\alpha_j|)|u|}
[t_j, u](a\alpha_1,\ldots,\alpha_j)
\\
&=(-1)^{(|a|+|\alpha_1| +\ldots +|\alpha_j|)|u|}(-1)^{|a|(|u|-1)}a[t_j, u](\alpha_1,\ldots,\alpha_j)
\\
&=
(-1)^{|a|}at_j (\alpha_1,\ldots,\alpha_j)(u)
\end{align*}
whence, since $u$ is arbitrary, 
$t_j$ is $A$-multilinear.

Likewise, let
$a \in A$, $\varphi\in \mathrm{Hom}(sL,A)$ 
and $\alpha_1,\ldots,\alpha_j,\alpha_{j+1} \in  sL$
be homogeneous.
We already know that \eqref{redo2} holds, viz.
\begin{equation*}
\begin{aligned}
\partial^{t_j}(\varphi)(\alpha_1,\ldots,\alpha_j,a\alpha_{j+1})
&=
(-1)^{(|\alpha_1|+\ldots+|\alpha_j|-1+|\varphi|)|a| }
a \partial^{t_j}(\varphi)(\alpha_1,\ldots,\alpha_j,\alpha_{j+1})
\\
&\quad +
 (-1)^{(|\alpha_1|+\ldots +|\alpha_j|+a)|\varphi|}
t_j(\alpha_1,\ldots,\alpha_j)(a) \varphi(\alpha_{j+1}).
\end{aligned}
\end{equation*}
Since the operator $\dho_j$ passes to $\mathrm{Sym}_A(sL,A)$,
we conclude that \eqref{redo1} holds, viz.
\begin{equation*}
\begin{aligned}
\apartial_j\varphi(\alpha_1,\ldots,\alpha_j,a\alpha_{j+1})
&=(-1)^{(|\alpha_1|+\ldots +|\alpha_j|+a)|\varphi|+1}
t_j(\alpha_1,\ldots,\alpha_j)(a) \varphi(\alpha_{j+1})
\\
&\quad +(-1)^{(|\alpha_1|+\ldots +|\alpha_j|+1+|\varphi|)|a|}
a\apartial_j\varphi(\alpha_1,\ldots,\alpha_j, \alpha_{j+1}).
\end{aligned}
\end{equation*}
Moreover, 
\begin{equation}
\apartial \varphi(\alpha_1, \ldots, \alpha_j, a\alpha_{j+1}) =
(-1)^{|\varphi|+1}\varphi(\cpartial
(\alpha_1,  \ldots, \alpha_j, a\alpha_{j+1})).
\end{equation}
Consequently 
\begin{equation}
\begin{aligned}
\varphi(\cpartial^j(\alpha_1,\ldots,\alpha_j, a \alpha_{j+1}))
&=
\varphi( t_j(\alpha_1,\ldots,\alpha_j)(a) \alpha_{j+1})
\\
&\quad
+(-1)^{(|\alpha_1|+\ldots +|\alpha_j|+1)|a|}
\varphi(a\cpartial^j(\alpha_1,\ldots,\alpha_j, \alpha_{j+1})) .
\label{sh121}
\end{aligned}
\end{equation}
Since $\varphi$ is arbitrary, the hypothesis implies that
the identity \eqref{sh12} holds.
\end{proof}

Theorems \ref{char2} and \ref{olr2} imply the following:
\begin{thm}
\label{shlrchar}
Let $A$ be a differential graded commutative algebra and
$L$ a differential graded
$A$-module having the property that the canonical
$A$-module morphism from $L$ to its double $A$-dual
is an isomorphism, and let
$\dho_0$ denote the algebra differential
on $\mathrm{Sym}_A(sL,A)$ induced from the differentials
on $L$ and $A$.
Sh Lie-Rinehart structures on $(A,L)$
extending the differentials on $A$ and $L$
and \multialgebra\  structures on
$\mathrm{Sym}_A(sL,A)$ extending the algebra differential 
$\dho_0$ are equivalent notions.
The equivalence between the two notions is that spelled out explicitly in
Theorem  {\rm \ref{shlrchar0}}.
\end{thm}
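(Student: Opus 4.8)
The plan is to package the claimed equivalence as one explicit assignment and then to read off its bijectivity from Theorems \ref{multishp}, \ref{char2} and \ref{olr2}. Throughout I would identify $\Sigm[sL]$ with $\Sigmc[sL]$ via \eqref{cano1}, write $d^0$ for the coalgebra differential on $\Sigm[sL]$ induced by the differential of $L$, and observe that the hypothesis that the canonical $A$-module morphism from $L$ to its double $A$-dual be an isomorphism is exactly the assertion that \eqref{cano} for $V=sL$ is an isomorphism, in particular an injection, so that the hypotheses of Theorems \ref{shlrchar0}, \ref{char2} and \ref{olr2} are all in force. The assignment under consideration sends an sh Lie-Rinehart structure $(\cpartial,t)$ on $(A,L)$ extending the given differentials to the family $\dho_j=\partial^{t_j}+\apartial_j$ ($j\geq1$), that is, to the \mmd\ algebra $(\mathrm{Sym}_A(sL,A),\dho_0,\dho_1,\dho_2,\ldots)$; this is precisely the correspondence appearing in Theorem \ref{shlrchar0}, so proving it is a bijection will establish the last assertion of the theorem.

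First I would record that the assignment really lands among \multialgebra\ structures extending $\dho_0$: this is exactly Theorem \ref{multishp} (equivalently the necessity half of Theorem \ref{shlrchar0}), which says each $\dho_j$ descends to an $A$-multilinear derivation of $\mathrm{Sym}_A(sL,A)$ lowering the filtration \eqref{augmentationfiltsym} by $j$ and that $\dho_0+\sum_{j\geq1}\dho_j$ is a differential. Next I would prove surjectivity: given any \multialgebra\ structure $\{\dho_j\}_{j\geq1}$ on $\mathrm{Sym}_A(sL,A)$ extending $\dho_0$, Theorem \ref{olr2} produces a filtered coalgebra perturbation $\cpartial$ of $d^0$ on $\Sigm[sL]$ and a filtered Lie algebra twisting cochain $t\colon(\Sigm[sL],d^0+\cpartial)\to\mathrm{Der}(A|R)$ from which this family arises, with every $\dho_j$ being $A$-multilinear; since \eqref{cano} is in particular injective, the sufficiency half of Theorem \ref{shlrchar0} (which rests on Theorem \ref{char2}) then shows $(A,L,\cpartial,t)$ to be an sh Lie-Rinehart structure extending the given differentials, and by construction it is sent to the prescribed $\{\dho_j\}$. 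Finally I would prove injectivity: if sh Lie-Rinehart structures $(\cpartial,t)$ and $(\cpartial',t')$ on $(A,L)$ produce the same family $\{\dho_j\}_{j\geq1}$, then by Theorem \ref{multishp} this common family is a \multialgebra\ structure on $\mathrm{Sym}_A(sL,A)$ arising from $(\cpartial,t)$ and also from $(\cpartial',t')$, in both cases with $A$-multilinear constituents, so the uniqueness clause of Theorem \ref{olr2} forces $(\cpartial',t')=(\cpartial,t)$.

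The step I expect to require the most care is the matching of the two descriptions of the operators that underlies both the surjectivity and the injectivity arguments: one must check that the decomposition $\dho_j=\partial^{t_j}+\apartial_j$ arising from $(\cpartial,t)$ via Theorem \ref{multishp} is literally the one reconstructed in the proof of Theorem \ref{olr2}, where $t_j$ is recovered as the adjoint of $\dho_j\colon A\to\mathrm{Hom}_A(\Sigm_A^j[sL],A)$ and $\cpartial^j$ as the coderivation extending $\widetilde\partial^{\bra}_j=\dho_j-\partial^{t_j}$ on $\mathrm{Hom}_A(sL,A)$, keeping the Eilenberg--Koszul signs and the filtration degrees straight. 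This matching is forced: on $A\subseteq\mathrm{Sym}_A(sL,A)$ the derivation $\apartial_j$ vanishes (the corestriction of $\cpartial^j$ to $sL$ is supported on $\Sigm^{j+1}[sL]$, so $\cpartial^j$ never lands in $\Sigm^0[sL]$), whence $t_j$ is determined by $\dho_j|_A$; then $\apartial_j=\dho_j-\partial^{t_j}$ is determined by its restriction to $\mathrm{Hom}_A(sL,A)$ together with the injectivity of \eqref{cano}, which recovers the corestriction of $\cpartial^j$ and hence $\cpartial^j$ itself; these are exactly the data used in Theorem \ref{olr2}. Once this identification is in place, the argument is a formal splicing of the cited theorems, and it only remains to note that the resulting bijection is compatible with comorphisms in the sense of Propositions \ref{como1} and \ref{como2}, so that the two notions are equivalent as structures and the equivalence is precisely the one made explicit in Theorem \ref{shlrchar0}.
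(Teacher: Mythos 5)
Your proposal is correct and follows essentially the same route as the paper, which proves Theorem \ref{shlrchar} simply by invoking Theorems \ref{char2} and \ref{olr2} together with the explicit correspondence of Theorem \ref{shlrchar0}; your write-up merely makes the bijectivity (surjectivity via Theorem \ref{olr2} plus the sufficiency half of Theorem \ref{shlrchar0}, injectivity via the uniqueness clause of Theorem \ref{olr2}) and the matching of the two descriptions of the operators $\dho_j=\partial^{t_j}+\apartial_j$ explicit. The closing remark about compatibility with comorphisms is not needed for the statement, but it does no harm.
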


\begin{rema}
\label{remprecise}
Theorem \ref{shlrchar0} says that,
given the data $(A,L,\cpartial,t)$,
under the hypothesis spelled out there,
these data constitute an sh Lie-Rinehart algebra if and only if
they induce a \multialgebra\  structure on
$\mathrm{Sym}_A(sL,A)$.
On the other hand,
Theorem \ref{shlrchar} says that, 
under the stronger hypothesis of this theorem,
every \multialgebra\  structure on
$\mathrm{Sym}_A(sL,A)$
of the kind under discussion arises from a unique
sh Lie-Rinehart algebra structure on $(A,L)$.
\end{rema}

\subsection{Quasi Lie-Rinehart algebras}
\label{quasilr}
Let $\AA$ be a differential 
graded commutative algebra concentrated
in degrees $\leq 0$ and,
as before, we then write
$\AA^j=\AA_{-j}$ so that
$\AA^j=0$ for $j <0$.
Furthermore, let $\QQ$ be a differential
graded $\AA$-module whose underlying graded
$\AA$-module is an induced module of the kind
$\QQ=\AA\otimes _A Q$ where $A=\AA^0$ and where
$Q$ is concentrated in degree zero.
Suppose that the canonical $A$-module morphism from
$Q$ to its double $A$-dual is an isomorphism.

In \cite{MR2103009} we introduced quasi Lie-Rinehart algebras.
We recall that
a quasi-Lie-Rinehart algebra structure on $(\AA,\QQ)$ involves 
the following three items:

\noindent
---  a graded skew-symmetric
$R$-bilinear pairing of degree zero
\begin{equation}
\bra_{\QQ}
\colon
\QQ
\otimes_R
\QQ
\xrightarrow{\ }
\QQ,
\label{braQ}
\end{equation}

\noindent
---  an $R$-bilinear pairing of degree zero
\begin{equation}
\act
\colon \QQ \otimes_R \AA
\xrightarrow{\ }
\AA,
\quad (\xi,\alpha) \mapsto \xi(\alpha),\ \xi \in \QQ,\ \alpha \in \AA,
\label{paiQ}
\end{equation}
such that, given $\xi \in \QQ$, the operation  $(\xi,\,\cdot\,)$
is a homogeneous $R$-linear
derivation of $\AA$ of degree $|\xi|$;

\noindent
---  an $A$-trilinear operation of degree $-1$
(beware: in upper degrees)
\begin{equation}
\actt_{\QQ} 
\colon
Q
\otimes_{A}
Q
\otimes_{A}
\AA
\xrightarrow{\ }
\AA
\label{triQ}
\end{equation}
which is graded skew-symmetric in the first two variables
(i.~e. in the $Q$-variables), such that,
given $\xi, \theta \in Q$, the operation 
$\langle \xi,\theta;\,\cdot\,\rangle$
is a homogeneous $A$-linear derivation of $\AA$ of degree $-1$.
These pieces of structure are subject to a number of constraints, 
see \cite{MR2103009} for details. We do not spell out these constraints here;
instead we will now explain directly the Maurer-Cartan algebra characterization
of the structure. This will illustrate the technology developed in
the present paper. The sign of the Lie-Rinehart operator
(generalized CCE operator)
in 
\cite{MR2103009}
is the 
negative of the sign of the Lie-Rinehart operator
in the present paper.

We note first that
\begin{equation}
\mathrm{Sym}_{\AA}(s\QQ,\AA)=
\mathrm{Hom}_{\AA}(\Sigm[s\QQ],\AA) \cong \Aalt_A(Q,\AA).
\label{identifi1}
\end{equation}
Now, let $\cpartial^1\colon \Sigm[s\QQ] \to \Sigm[s\QQ]$
denote the coderivation induced by \eqref{braQ}
and, likewise, 
$t_1\colon \Sigm^1[s\QQ] = s\QQ \to \mathrm{Der}(\AA|R)$
the degree $-1$ morphism of $R$-modules
given by the composite of the desuspension with the adjoint of
\eqref{paiQ}. As before, let
$\apartial_1$ and $\partial^{t_1}$
denote the derivations on $\mathrm{Hom}(\Sigm[s\QQ],\AA)$
induced by 
$\cpartial^1$ and $t_1$, respectively, cf. \eqref{aexplicit} and
\eqref{expres1}.
The compatibility conditions among the pieces of structure
\eqref{braQ} and \eqref{paiQ}
(not spelled out here) imply that the sum
$\dho_1= \apartial_1 +\partial^{t_1}$
passes to a derivation $\dho_1$ on $\Aalt_A(Q,\AA)$,
formally a CCE operator with respect to \eqref{braQ} and \eqref{paiQ}.
Indeed, given $\xi_1,\dots,\xi_{p+1} \in Q$.
\begin{equation*}
\begin{aligned}
{}&(-1)^{|f|}(\dho_1f)(\xi_1,\dots,\xi_{p+1})
= 
\sum_{j=1}^{p+1} 
(-1)^{j-1}
\xi_j(f(\xi_1,\dots\widehat{\xi_j}\dots,\xi_{p+1}))
\\
&+
\sum_{1 \leq j<k \leq p+1} (-1)^{j+k} f(\lbrack \xi_j,\xi_k \rbrack_Q,
\xi_1,\dots\widehat{\xi_j}\dots\widehat{\xi_k}\dots,\xi_{p+1});
\end{aligned}
\end{equation*}
here the notation $\bra_Q$
refers to the restriction to the degree zero constituent
$Q$ (of $\QQ$) of the bracket \eqref{braQ} on $\QQ$.
Next,
given
$f \in \Aalt^p_A(Q,\AA^q)=\mathrm{Hom}_A(\Lambda^p_A[sQ],\AA^q)$,
define the value $\dho_2(f)(\xi_1,\dots,\xi_{p+2})$ by
\begin{equation}
\begin{aligned}
{}&(-1)^{|f|}(\dho_2f)(\xi_1,\dots,\xi_{p+2})=
\\
{}&
\sum_{1 \leq j<k \leq p+2}  
(-1)^{j+k}
\langle\xi_j,\xi_k;
f(\xi_1, \dots\widehat{\xi_j}\dots\widehat{\xi_k}\dots,\xi_{p+2})\rangle_Q 
\end{aligned}
\label{4.8.2}
\end{equation}
where $\xi_1,\dots,\xi_{p+2} \in Q$.

\begin{rema} The expression in \cite{MR2103009} (4.8.2)
that corresponds to \eqref{4.8.2} above
involves the additional sign factor $(-1)^p$.
This sign factor has its origin in the relationship with Lie-Rinehart triples,
cf. \cite{MR2103009} (4.11) and (4.12).
More precisely, in terms of the notation in  \cite{MR2103009},
given
$x_1,\dots,x_{q-1} \in H$, 
$\alpha \in \Aalt^q(H,A)=\AA^q$,
in \cite{MR2103009} (4.11.7),
the value of the pairing \eqref{triQ} is defined by
\begin{equation*}
\langle\xi,\eta;\alpha\rangle_Q(x_1,\dots,x_{q-1})
=
\alpha(\delta(\xi,\eta),x_1,\dots,x_{q-1});
\end{equation*}
however, when we define that value by
\begin{equation}
\langle\xi,\eta;\alpha\rangle_Q(x_1,\dots,x_{q-1})
=(-1)^{|\alpha|}
\alpha(\delta(\xi,\eta),x_1,\dots,x_{q-1}),
\label{4.11.7mo}
\end{equation}
the factor $(-1)^p$ in \cite{MR2103009} (4.8.2) disappears.
\end{rema}

Let $\dho_0$ denote the algebra differential
on $\Aalt_A(Q,\AA)$
induced from the differentials on $\AA$ and $\QQ$.
By \cite{MR2103009} (Theorem 4.10),
\eqref{braQ}, \eqref{paiQ},
and
\eqref{triQ} turn
$(\AA,\QQ)$
into a quasi Lie-Rinehart
algebra if and only if
$\left(\Aalt_A(Q,\AA),\dho_0,\dho_1,\dho_2\right)$
is a multi algebra, and any
quasi Lie-Rinehart algebra structure arises in this manner.
In view of \eqref{identifi1},
Theorem \ref{shlrchar} shows that
the multi algebra 
$\left(\Aalt_A(Q,\AA),\dho_0,\dho_1,\dho_2\right)$, in turn,
characterizes a special kind of
sh Lie-Rinehart structure on $(\AA,\QQ)$.
Hence a quasi Lie-Rinehart
structure on $(\AA,\QQ)$ is equivalent to a special kind of
sh Lie-Rinehart structure.

Let $\left(\AA,\QQ,\bra_{\QQ},\act,\actt\right)$
be a quasi Lie-Rinehart algebra.
With hindsight, it is now interesting to
spell out the corresponding
sh Lie-Rinehart structure  on $(\AA,\QQ)$
arising from  the quasi Lie-Rinehart
algebra structure.
This is straightforward: The bracket
\eqref{braQ} is already defined and, as before,
we define
$t_1\colon \Sigm^1[s\QQ] = s\QQ \to \mathrm{Der}(\AA|R)$
to be the degree $-1$ morphism of $R$-modules
given by the composite of the desuspension with the adjoint of
\eqref{paiQ}. 
Now, noting that a general sh Lie algebra structure
on $\QQ$ is given by a system of brackets of the kind
\begin{equation*}
\bran_n\colon  \QQ^{j_1} \times \ldots  \times \QQ^{j_n}
\longrightarrow  \QQ^{j_1+\ldots +j_n-n+2}, \ n \geq 2,\ 
j_1,\ldots,j_n \geq 0,
\end{equation*} 
we see that the 3-variable bracket we are looking for must be of the kind
\[
\braaa\colon \QQ^{j_1} \times  \QQ^{j_2}  \times \QQ^{j_3}
\longrightarrow  \QQ^{j_1+j_2 +j_3-1},\ j_1,j_2,j_3 \geq 0;
\]
in particular, the constituent of $\braaa$
on the degree zero component $Q \times Q \times Q$
of $\QQ\times \QQ\times \QQ$
is necessarily zero.
Define the operation
\begin{equation}
\braaa
\colon
Q
\otimes
Q
\otimes
\QQ
\xrightarrow{\ }
\QQ
\label{triQo}
\end{equation}
by setting
\begin{equation}
[x_1,x_2,ax_3]_3=\langle x_1,x_2;a\rangle_{\QQ} x_3,\ x_1,x_2,x_3 \in Q,\ a \in \AA.
\label{triQo0}
\end{equation}
Write the operations
$\act$ and $\actt_{\QQ}$ as
\begin{align*}
\{\,\cdot\,|\,\cdot\,\}
&\colon
\QQ
\otimes
\AA
\xrightarrow{\ }
\AA
\\
\{\,\cdot\, , \,\cdot\,|\,\cdot\,\}
&\colon
Q
\otimes_{A}
Q
\otimes_{A}
\AA
\xrightarrow{\ }
\AA,
\end{align*}
respectively
and, using \eqref{sh1}, extend 
$\{\,\cdot\, , \,\cdot\,|\,\cdot\,\}$ to an operation
\begin{equation}
\{\,\cdot\, , \,\cdot\,|\,\cdot\,\}
\colon
\QQ
\otimes_{A}
\QQ
\otimes_{A}
\AA
\xrightarrow{\ }
\AA .
\end{equation}
Thereafter, 
extend \eqref{triQo} to 
\begin{equation}
\braaa
\colon
\QQ
\otimes_{R}
\QQ
\otimes_{R}
\QQ
\xrightarrow{\ }
\QQ
\label{triQO}
\end{equation}
in such a way that
\eqref{sh1} holds, i.~e., given homogeneous $x_1,x_2,x_3 \in \QQ$
and $a \in \AA$,
\begin{equation}
[x_1,x_2,ax_3]_3=\{x_1,x_2|a\} x_3 +(-1)^{(|x_1|+|x_2|+1)|a|}a[x_1,x_2,x_3]_3 .
\end{equation}
Then $\left(\AA,\QQ,\bra_{\QQ},\braaa,\{\,\cdot\,|\,\cdot\,\},\{\,\cdot\, , \,\cdot\,|\,\cdot\,\}\right)$
is the sh Lie-Rinehart algebra we are looking for.

It is also interesting to spell out explicitly
how the 3-variable bracket controls the failure of the
2-variable bracket to satisfy the Jacobi identity:
By construction, the control of the failure of the 2-variable bracket
to satisfy the Jacobi identity is encapsulated in the identity
\eqref{benefit2}, viz.
\begin{equation}
d^0 \cpartial^2 + \cpartial^1 \cpartial^1 + \cpartial^2 d^0=0.
\label{ident8}
\end{equation}
Given $\xi,\eta,\vartheta \in Q$,
for degree reasons,
$[\xi,\eta,\vartheta]$ is zero, 
as noted already,
and \eqref{ident8}
entails
\begin{align*}
\sum_{(\xi,\eta,\vartheta)\ \mathrm{cyclic}}
[[\xi,\eta]_Q,\vartheta]_Q
=
&
\pm \left([d_0\xi,\eta,\vartheta]_3
+[\xi,d_0\eta,\vartheta]_3
+[\xi,\eta,d_0\vartheta]_3
\right).
\end{align*}
In the Lie-Rinehart triple case
(a concept not explained here), this identity is equivalent to
 \cite{MR2103009} (1.9.6).
See also  \cite{MR2103009} (2.8.5(v)) and
the proof of Theorem 4.10 in \cite{MR2103009}, expecially item (v)
in this proof.

In \cite{MR2103009}  we have shown that
a foliation determines a Lie-Rinehart triple
and that a Lie-Rinehart triple determines a
quasi Lie-Rinehart algebra.
This quasi Lie-Rinehart algebra encapsulates
the higher homotopies behind the foliation.
In particular, it has the spectral sequence of the foliation
as an invariant of the structure.

\addcontentsline{toc}{section}{References}
\def\cprime{$'$} \def\cprime{$'$} \def\dbar{\leavevmode\hbox to 0pt{\hskip.2ex
  \accent"16\hss}d} \def\cprime{$'$} \def\cprime{$'$} \def\cprime{$'$}
  \def\cprime{$'$} \def\cprime{$'$}
  \def\polhk#1{\setbox0=\hbox{#1}{\ooalign{\hidewidth
  \lower1.5ex\hbox{`}\hidewidth\crcr\unhbox0}}}

\end{document}